\newcommand{\norm}[1]{\ensuremath{\left\| #1 \right\|}}
\newcommand{\bracket}[1]{\ensuremath{\left[ #1 \right]}}
\newcommand{\refeqn}[1]{(\ref{eqn:#1})}
\newcommand{\reffig}[1]{Figure \ref{fig:#1}}
\newcommand{\tr}[1]{\mathrm{tr}\ensuremath{\negthickspace\bracket{#1}}}
\newcommand{\SO}{\ensuremath{\mathsf{SO(3)}}}
\newcommand{\T}{\ensuremath{\mathsf{T}}}
\newcommand{\so}{\ensuremath{\mathfrak{so}(3)}}
\newcommand{\SE}{\ensuremath{\mathsf{SE(3)}}}
\renewcommand{\Re}{\ensuremath{\mathbb{R}}}
\newcommand{\Ree}{\ensuremath{\mathbb{R}}}
\renewcommand{\d}{\ensuremath{\mathfrak{d}}}
\newcommand{\Sph}{\ensuremath{\mathsf{S}}}
\newcommand{\g}{\ensuremath{\mathfrak{g}}}
\newcommand{\Lya}{\text{Lyapunov}}
\newcommand{\diag}[1]{\mbox{diag}\ensuremath{\negthickspace\bracket{#1}}}
\date{}
\newtheorem{prop}{Proposition}[section]
\title{Geometric Adaptive Control with Neural Networks\\ for a Quadrotor UAV in Wind fields}
\author{Mahdis Bisheban, and Taeyoung Lee%
	\thanks{M. Bisheban, Mechanical and Aerospace Engineering, George Washington University, Washington DC 20052 {\tt mbshbn@gwu.edu}}
	\thanks{T. Lee, Mechanical and Aerospace Engineering, George Washington University, Washington DC 20052 {\tt tylee@gwu.edu}}
	\thanks{This research has been supported in part by NSF under the grant, CNS-1837382. }
}
\begin{document}
	
\maketitle
	
\begin{abstract}
    This paper proposes a geometric adaptive controller for a quadrotor unmanned aerial vehicle with artificial neural networks. 
    It is assumed that the dynamics of a quadrotor is disturbed by arbitrary, unstructured forces and moments caused by wind. 
    To address this, the proposed control system is augmented with multilayer neural networks, and the weights of neural networks are adjusted online according to an adaptive law. 
    By utilizing the universal approximation theorem, it is shown that the effects of unknown disturbances can be mitigated. 
    More specifically, under the proposed control system, the tracking errors in the position and the heading direction are uniformly ultimately bounded where the ultimate bound can be reduced arbitrarily. 
    These are developed directly on the special Euclidean group to avoid complexities or singularities inherent to local parameterizations. 
    The efficacy of the proposed control system is first illustrated by numerical examples. 
    Then, several indoor flight experiments are presented to demonstrate that the proposed controller successfully rejects the effects of wind disturbances even for aggressive, agile maneuvers. 

\end{abstract}

\section{Introduction}







Multirotor unmanned aerial vehicles are subject to various disturbance forces and moments. 
In particular, wind disturbances may severely degrade the performance and stability of small  aerial vehicles. 
Thus it is critical to carefully characterize these effects and to alleviate them for reliable autonomous flights in various outdoor environments. 
To address this issue, several approaches have been considered for comprehensive aerodynamic modeling of wind effects, system identification of wind effect  modeling parameters, and feedback control systems to mitigate the wind effects. 

With regard to the wind effects modeling, the thrust and the drag forces for forward flights are studied in~\cite{Gill2017}, and it is shown that the assumptions for hovering flight models become deteriorated when the relative wind speed is greater than $4$ to $7\si{\meter \per \second}$.
In~\cite{Craig_Paley_16}, the blade-flapping response of a small-stiff propeller in wind is studied with a rotor--pendulum system. 
Once a mathematical model for wind effects is determined, the modeling parameters should be identified via experiments with a particular unmanned aerial vehicle under consideration. 
To determine the unknown aerodynamic modeling parameters, \cite{Bisheban_Lee_IJCAS_17,Bisheban_Lee_CCTA_17} present computational geometric approaches for system identification of the quadrotor dynamics, 
where the system identification problem is converted into an optimization problem to minimize the discrepancy between the identified model and the actual response. 


To reject the undesired effects of wind disturbances, control systems are proposed to cancel out the wind effects from the above mathematical models.
In~\cite{Tran15}, a look-up table is used to estimate wind forces and moments in real-time based on relative wind speed and the rotational speed of propellers.
The table is generated by solving computational intensive aerodynamic expressions.
Reference~\cite{Bangura_Mahony_17} presents the dynamics of a brushless DC motor that is constructed to determine the power level to follow a given desired trajectory while rejecting axial wind effects.  
In~\cite{craig2019geometric}, wind velocity data from flow probes is utilize in a control system to guarantee stability in the presence of winds. 
While these cancellation techniques have been successful, the robustness and performance are limited by the accuracy of the wind effect model used in the controller, and the estimated wind velocity. 
The control force and moment resisting wind would be reliable within the flight envelop considered for the aerodynamic modeling, which is additionally limited by computing resources available in real-time. 
Further, they may deteriorate for unexpected wind gusts as there is no mechanism to adjust the modeling errors online. 

On the other hand, several alternative control techniques have been presented to reduce the undesired dependency on wind effect modeling accuracy or wind measurement errors. 
For example, \cite{Goodarzi_Lee_13} presents a geometric proportional-integral-derivative  controller on the special Euclidean group to reject unknown, fixed uncertainties. 
Also, parametric uncertainties are addressed with a geometric adaptive control scheme in~\cite{GooLeeAJDSMC15}. 
In~\cite{Mellinger_Kumar_2014}, to overcome the effects of modeling errors, data of successive indoor experimental trials are used to tune control parameters for aggressive maneuvers.
In~\cite{Nicol_Ramirez-Serrano_08}, an adaptive neural network is used for the reduced dynamics of a quadrotor in the altitudes and the attitudes. 



This paper proposes a geometric adaptive control scheme for a quadrotor unmanned aerial vehicle, where the effects of wind are considered as unstructured, unknown disturbances. 
Instead of counterbalancing those with an aerodynamic model and a measured wind velocity, wind disturbances are compensated by artificial neural network whose weighing parameters are adjusted online.
More specifically, we adopt geometric controller proposed in~\cite{LeeLeoPICDC10}, and augment it with multi-layer neural networks and an adaptive law to mitigate unknown disturbance forces and moments that are considered as an arbitrary function of quadrotor states.
The dynamics of a quadrotor is globally formulated on the special Euclidean group to avoid singularities and complexities inherent to Euler angles or quaternions. 
It is shown that the tracking errors are uniformly ultimately bounded with an ultimate bound that can be reduced arbitrarily up to any desired precision. 
These are illustrated by numerical examples with simulated aerodynamic effects of wind. 
Next, we show that the proposed geometric adaptive controller is able to mitigate wind effects even for aggressive maneuvers through indoor flight experiments with artificial wind gusts generated by an industrial fan.  

The preliminary results are presented in~\cite{Bisheban_Lee_CDC18}. 
However, this paper presents the complete Lyapunov stability proof, extensive numerical examples, and results of flight experiments that are not available in~\cite{Bisheban_Lee_CDC18}.

In short, the main contribution of this paper is presenting a geometric neural network based adaptive controller for a quadrotor that is capable of compensating unknown aerodynamic forces and moments caused by wind. 
This requires neither a precise mathematical model of wind effects nor the actual wind velocity, and it can be implemented without additional onboard anemometer. 
Furthermore, autonomous agile maneuvers under strong wind have not been presented in literature.




\section{Problem Formulation}\label{sec:QDM}

\subsection{Quadrotor Dynamics with Disturbances}

This section formulates the quadrotor dynamics including unknown disturbances in the translational dynamics and the rotational dynamics. 
As they are considered as arbitrary disturbing forces and moments, they may represent the wind disturbance effects as discussed later in Section IV. 
The quadrotor UAV is regarded as a rigid body whose configuration is represented by the position of the center of mass $x \in \Ree^3$ in the inertial frame, and the orientation of the body-fixed frame with respect to the inertial frame $R\in\SO=\{R\in\Ree^{3\times 3}\,|\, R^TR=I_{3\times 3},\mathrm{det}[R]=+1\}$.
Thus the configuration space of a quadrotor is the special Euclidean group $\SE$, which is the semi-direct product of $\SO$ and $\Re^3$. 


The equations of motion are given by
\begin{gather}
\dot{x}=v,\label{eqn:EC1}\\
m\dot{v}=U_{e},\label{eqn:EC2}\\
\dot R = R\hat\Omega,\label{eqn:EC3}\\
J\dot \Omega + \Omega\times J\Omega = M_e,\label{eqn:EC4}
\end{gather}
where $U_e,M_e\in\Ree^3$ are the resultant force resolved in the inertial frame and the resultant moment resolved in the body-fixed frame. 
The mass and the inertia matrix are denoted by $m\in\Ree$, and $J\in\Ree^3$, respectively. 
The vector $v\in\Ree^3$ is the linear velocity in the inertial frame, and $\Omega\in\Ree^3$ is the angular velocity resolved in the body-fixed frame. 
The hat map $\wedge :\Ree^3\rightarrow\so$ is defined such that $\hat x y = x\times y$ and $(\hat x)^T =-\hat x$ for any $x,y\in\Ree^3$. 
The inverse of the hat map is denoted by the vee map $\vee :\so\rightarrow\Ree^3$.


Suppose that $d_h,d_v\in\Ree$ specify the horizontal and vertical distances from the origin of the body-fixed frame to the center of a rotor. 
The location of four rotors in the body-fixed frame are given by
\begin{gather}
r_1=\begin{bmatrix} d_h,0,d_v \end{bmatrix}^T, \;  r_2=\begin{bmatrix} 0,-d_h,d_v \end{bmatrix}^T,\\ 
r_3=\begin{bmatrix} -d_h,0,d_v \end{bmatrix}^T, \; r_4=\begin{bmatrix} 0,d_h,d_v \end{bmatrix}^T.
\end{gather} 
Let the thrust $T_j^\prime\in \Ree$ and torque $Q_j^\prime\in \Ree$ of the $j$-th motor be given by
\begin{gather}
T_j^\prime=C_T^\prime\omega_j^2,\quad
Q_j^\prime=C_Q^\prime\omega_j^2\equiv C_{TQ}T_j^\prime,\label{eqn:T_Q_w}
\end{gather} 
where $C_T^\prime,C_Q^\prime\in \Ree$ are constant thrust and torque coefficients, and $C_{TQ}=\frac{C'_Q}{C'_T}\in \Ree$ determines the relation between reactive torque and thrust. 
The resultant force and moment acting on a quadrotor can be written as
\begin{gather}
U_e^\prime=mge_3-fRe_3-\Delta_1,\label{eqn:E3}\\
M_e^\prime=-\Sigma_{j=1}^4 r_j \times T_j^\prime e_3-(-1)^{j+1}Q_j^\prime e_3-\Delta_2,\label{eqn:E4}
\end{gather}
where $f=\Sigma_{j=1}^4T_j\in\Ree$ is the sum of the four rotor thrusts, and $mge_3$ is the gravitational force with $e_3=[0,0,1]\in\Re^3$. 
Unknown disturbance force and moment are denoted by $\Delta_1$ and $\Delta_2\in\Ree^3$ respectively.

\begin{table}\centering
	\caption{Summary of notations}\label{tab:notations}
	\begin{tabular}{cc}\hline
		Notation& Refers to  
		\\\hline
		$\hat{}$  & hat map\\
		$\vee$ & vee map \\
		$\bar{}$ & estimated value \\
		$\tilde{}$ & estimation error value  \\
		$\dot{}$ & time derivative  \\
		$\prime{}$ & alternative value  \\
		$\times$ & cross product  \\
		$\norm{}$ & Frobenius norm of a matrix, and 2-norm of a vector \\ 
		$\lambda_m{()}$ & minimum eigen value of a matrix \\	
		$\lambda_M{()}$ & maximum eigen value of a matrix \\		\hline
	\end{tabular}
\end{table}

\subsection{Position Tracking Control Problem}

Suppose that the desired position trajectory is given as a smooth function of time, i.e., $x_d(t)\in\Re^3$. 
It is considered that $x_d(t)$ and all of its time-derivatives are bounded. 
We wish to design a control system for the rotor thrusts such that the actual position trajectory asymptotically follows the desired value in the presence of the unknown disturbance. 
Instead of designing the rotor thrusts, the control input is considered as the total thrust $f$, and the control moment $M=[M_1,M_2,M_3]^T\in\Re^3$ in the body-fixed. 
For a given $(f,M)$, the equivalent thrust at each rotor can be computed by
\begin{align}
\begin{bmatrix} T_1^\prime\\T_2^\prime\\T_3^\prime\\T_4^\prime\end{bmatrix}=\begin{bmatrix}1&1&1&1\\ 0&-d_h&0&-d_h\\d_h&0&-d_h&0\\-C_{TQ}&C_{TQ}&-C_{TQ}&C_{TQ}  \end{bmatrix}^{-1} \begin{bmatrix} f\\M_1\\M_2\\M_3\end{bmatrix}.\label{eqn:compute_all_thrusts}
\end{align}

\section{Geometric Adaptive Controller With Neural Networks}

In this section, we present a geometric adaptive control system for a quadrotor to reject the effects of unknown disturbances without any prior knowledge. 

\subsection{Controller Structure}


The presented quadrotor dynamics is underactuated as there are four control inputs. 
In~\cite{LeeLeoPICDC10}, a geometric control system for a quadrotor is presented with a backtepping approach, which is adopted in this paper.
The overall controller structure is summarized as follows. 
Let the tracking errors in the position and the velocity be 
\begin{gather}
e_x=x-x_d,\quad 
e_v=v-\dot{x}_d.\label{eqn:ev}
\end{gather}
For positive controller gain $k_x,k_v$, consider an ideal control force  $A\in\Re^3$ defined as
\begin{align}
A=&\bar \Delta_1-k_x e_x-k_v e_v -mge_3+m \ddot x_d,\label{eqn:A}
\end{align}
where $\bar \Delta_1 \in\Re^3$ is an adaptive control term to mitigated the effects of the disturbance $\Delta_1$. 
It is straightforward to show that the control objective will be achieved if the control force term $-fRe_3$ in \eqref{eqn:E3} is replaced by the above ideal value. 
However, that is not achievable as the total control thrust is always opposite to the third body fixed axis, i.e., the direction of the total thrust is always $-R e_3$, and only its magnitude $f$ can be adjusted arbitrarily.

To address this, an attitude controller is introduced such that the actual attitude is guided toward to the ideal thrust direction defined by \eqref{eqn:A}.
More specifically, the desired direction for the third body-fixed axis is given by
\begin{align}
    b_{3c}=-\frac{A}{||A||}.
\end{align}
As it is a two-dimensional unit vector, the desired heading direction, namely $b_{1_d}(t)\in\Sph^2=\{q\in\Re^3\,|\,\|q\|=1\}$ is further introduced as a function of time.
These yield the complete desired attitude as 
\begin{align}
    R_c = [b_{1_c}, b_{2_c}, b_{3_c}],\label{eqn:Rc}
\end{align}
where
\begin{align*}
    b_{1_c} &= b_{2_c}\times b_{3_c},\\
    b_{2_c} &= -\frac{b_{1_d}\times b_{3_c}}{\| b_{1_d}\times b_{3_c}\|}.
\end{align*}
One can show the above construction guarantees $R_c\in\SO$, and by taking its time-derivative, the desired angular velocity also can be constructed as
\begin{align}
	\Omega_c=(R_c^T\dot{R}_c)^\vee.\label{eqn:omegac}
\end{align}

Any attitude tracking control system can be implemented to asymptotically follow $R_c$, and the total thrust is chosen as the ideal control force projected to the current thrust direction as follows.
\begin{align}
f=&-A^T Re_3,\label{eqn:f}\\
M_c=&\bar \Delta_2-k_R e_R-k_\Omega e_\Omega+\Omega \times J \Omega\nonumber\\
&-J(\hat \Omega R^T R_c \Omega_c-R^T R_c \dot{\Omega}_c),\label{eqn:Mc}
\end{align}
where $k_R,k_\Omega$ are positive attitude control gains, and the tracking errors for the attitude and the angular velocity are given by
\begin{gather}
e_R=\frac{1}{2}(R_c^TR-R^TR_c)^\vee,\quad 
e_\Omega=\Omega-R^TR_c\Omega_c,\label{eqn:eomega}
\end{gather}
Also, $\bar\Delta_2\in\Re^3$ denotes an adaptive term to eliminate the effects of the unknown disturbance $\Delta_2$. 

In the absence of the disturbances and the adaptive control terms, local exponential stability has been established in~\cite{LeeLeoPICDC10}.
Next, we will formulate the expression for the adaptive terms and the adaptive control laws to address the unknown disturbances. 
Here we assume
\begin{gather}
\norm{-mge_3+m \ddot x_d+\bar \Delta_1}\leq B_1\label{eqn:acceleration_bound},
\end{gather}
for a given positive constant $B_1$. 


\subsection{Adaptive Neural Network Structure}\label{Sec:ANN}
\begin{figure}
\def\OutNeuron#1{\Neuron @+
  \Output{#1}}
\def\InNeuron#1{\Connect @+
  \Input {#1}}
\def\Hidden{\Neuron
 ;@@{\connect\dir{-}}\POS p}
\def\HiddenL{\Connect
 ;@@{\connect\dir{-}}\POS p}
\def\Neuron{\POS*+[o]{},*\frm{o}}
\def\Connect{\POS*[o][F]{}}
\def\Output#1{\ar +(1.0,0)*[r]+(0.2,0){#1}}
\def\Input#1{\ar @{<-}-(1.0,0)*[l]+(0.2,0){#1}}
\def\Extend{\save\drop!C\txt{\bf\vdots}\restore}
$$\xy\xygraph{!{<3.5pc,0pc>:<0pc,1.5pc>::@(}
     [] !{+(0,0.5)}!{\InNeuron{1}}
    [d] !{\InNeuron{x^\circ_{nn_1}}}
    [d] !{\InNeuron{x^\circ_{nn_2}}}
    [d] !{\Extend}
    [d] !{\Extend}
    [d] !{\InNeuron{x^\circ_{nn_{N_1}}}}
[r(1.2)uuuu] !{+(0,0.5)}!{\Hidden}!{\ar +(0.8,0)^{\varsigma_1}}
    [d] !{\Hidden}!{\ar +(0.8,0)^{\varsigma_2}}
    [d] !{\Extend}
    [d] !{\Hidden}!{\ar +(0.8,0)^{\varsigma_{N_2-1}}}
    [d] !{\Hidden}!{\ar +(0.8,0)^{\varsigma_{N_2}}}
    !{@i@)}
    !{@(}
[rruuu] !{+(0,0.5)}!{\OutNeuron{y_{nn_1}}}
    [d] !{\OutNeuron{y_{nn_2}}}
    [d] !{\Extend}
    [d] !{\OutNeuron{y_{nn_{N_3}}}}
[l(1.2)uuuu] !{+(0,0.5)}!{\HiddenL \Input{1}}
    [d] !{\HiddenL}
    [d] !{\HiddenL}
    [d] !{\Extend}
    [d] !{\HiddenL}
    [d] !{\HiddenL}
!{@i@)}
}
\endxy $$
\caption{Structures of  3-layer neural networks}\label{fig:nnstruct}
\end{figure}

\begin{figure*}
	\centering
	\includegraphics[width=0.7\textwidth]{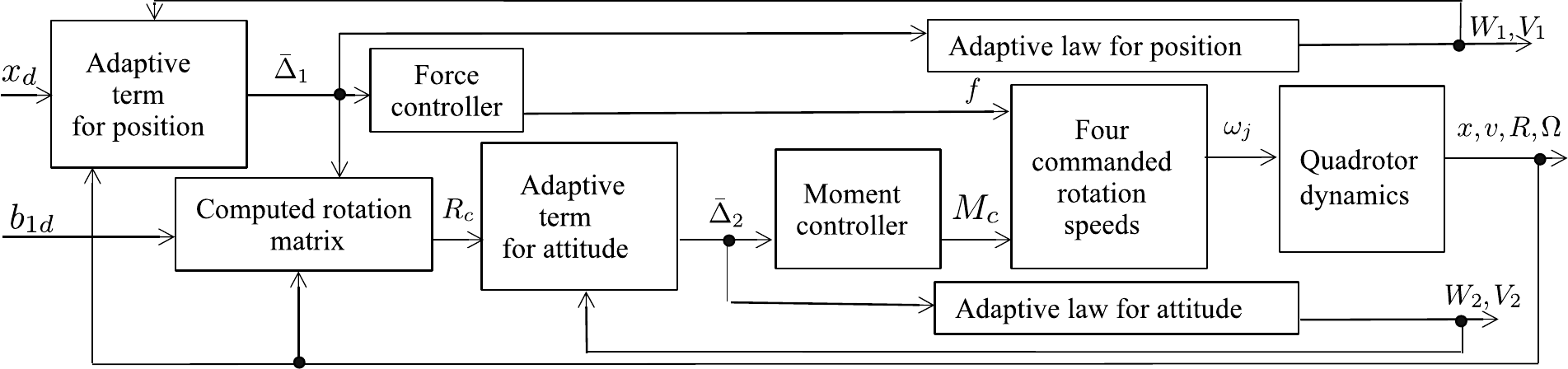}
	\caption{Adaptive controller structure (The adaptive term for the position and the attitude dynamics are given by \refeqn{delta_est}, the force controller is given by \refeqn{f}, the computed rotation matrix is given by \refeqn{Rc}, the moment controller is given by \refeqn{Mc}, the adaptive law is given by \refeqn{Wdot}--\refeqn{Vdot}, and \refeqn{W_bounded}--\refeqn{V_bounded}, four commanded rotation speeds are given by \refeqn{compute_all_thrusts}, \refeqn{T_Q_w}, and quadrotor dynamics are given by \refeqn{EC1}--\refeqn{EC3}, \refeqn{EC2}--\refeqn{EC4})}\label{cont_fig}
\end{figure*}


Consider a three-layer artificial neural network as illustrated in Figure~\ref{fig:nnstruct}. 
The number of neurons at the input layer, the hidden layer, and the output layer are denoted by $N_1+1$, $N_2+1$, and $N_3$, respectively. 
The input to the neural network is arranged in a vector form $x_{nn}\in\Re^{N_1+1}$ as
\begin{align*}
    x_{nn}=[1, x^\circ_{nn_1},\ldots x^\circ_{nn_{N_1}} ].
\end{align*}
The input to the hidden layer, namely $z\in\Re^{N_2}$, is a weighted sum of the above, given by
\begin{align*}
    z = V^T x_{nn},
\end{align*}
for a weighting matrix $V\in\Re^{N_1+1\times N_2}$. 
The output $y\in\Re^{N_3}$ of the neural network is 
\begin{align*}
    y = W^T \sigma(z),
\end{align*}
where the weighing of the output layer is denoted by $W\in\Re^{N_2+1\times N_3}$, and the activation function $\sigma:\Re^{N_2}\rightarrow \Re^{N_2+1}$ is defined as
\begin{align*}
    \sigma(z) = [1, \varsigma_1, \ldots , \varsigma_{N_2}],
\end{align*}
for the sigmoid function 
\[
    \varsigma_k = \frac{1}{1+e^{-z_k}},
\]
for $k\in\{1,\ldots N_2\}$. 

We assume that the unknown disturbance force and moment, namely $(\Delta_1,\Delta_2)$ in \eqref{eqn:E3}, \eqref{eqn:E4}, are dependent of the quadrotor state. 
According to the universal approximation theorem~\cite{hornik1989multilayer}, there exist artificial neural networks that approximate these disturbances up to an arbitrary level of accuracy. 

More explicitly, the particular structures of the  artificial neural networks utilized in this paper are defined as follows. 
Throughout the remainder of this paper, the subscript $i=1$ denotes the position dynamics, and $i=2$ denotes the attitude dynamics. 
Let the input to the neural network $x_{nn_i} \in \Ree^{N_{1_i}+1}$ be
\begin{gather}
x_{nn_i}=[1,x_{1_i},x_{2_i}],
\end{gather}
where $x_{1_1}=x$, $x_{2_1}=v$ are for position dynamics, and $x_{1_2}=E(R)^T$, $x_{2_2}=\Omega$, $E(R)=[\theta, \phi, \psi]$ contains the Euler angles from the rotation matrix $R$, are for attitude dynamics. 
Consequently, $N_{1_1}=N_{1_2}=6$.
Since the neural network is formulated to approximate the disturbance force and moment, the number of output is $N_{3_1}=N_{3_2}=3$. 
The universal approximation theorem implies that there exists an ideal value of the weighting parameters $(W_i,V_i)$ and the number of the hidden layer $N_2$ such that
\begin{align*}
    \Delta_i = W_i^T \sigma (V_i^T x_{nn_i}) + \epsilon(x_{nn_i}),
\end{align*}
for the approximation error satisfying $\|\epsilon(x_{nn_i})\| \leq \epsilon_N$ for some $\epsilon_N > 0$.

While the ideal values $(W_i,V_i)$ are not available, it is assumed that upper bounds $W_{M_i},V_{M_i}>0$ are given such that
\begin{gather}
    ||W_i|| \leq W_{M_i}, \quad  ||V_i|| \leq V_{M_i}.
\label{eqn:NNbound}
\end{gather}
Let $( \bar W_i, \bar V_i)$ be the current estimate to the ideal weighting matrices. 
The adaptive control term in \eqref{eqn:A} and \eqref{eqn:Mc} are computed by
\begin{gather}
\bar \Delta_i=\bar W_i^T \sigma(\bar{z}_i),\label{eqn:delta_est}
\end{gather}
with $\bar z_i = \bar V_i^T x_{nn_i}$. 
And they are updated according to the following adaptive law:
\begin{gather}
\dot{\bar{W_i}}= 
\begin{cases}
\dot{\bar{W_i}}^\prime & \parbox[t]{0.25\textwidth}{if $\norm{\bar{W_i}}< W_{M_i} $ or ($\norm{\bar{W_i}}= W_{M_i} , \, \dot{\bar{W_i}}^T \bar{W_i}\leq 0$)}\\
[I_W-\frac{\bar{W_i} \bar{W_i}^T}{\bar{W_i}^T \bar{W_i}}]\dot{\bar{W_i}}^\prime & \text{otherwise},
\end{cases}\label{eqn:W_bounded}\\
\dot{\bar{V_i}} = \begin{cases}
\dot{\bar{V_i}}^\prime & \parbox[t]{0.25\textwidth}{if $\norm{\bar{V_i}}< V_{M_i} $ or ($\norm{\bar{V_i}}= V_{M_i} , \, \dot{\bar{V_i}}^T\bar{V_i}\leq 0$)}\\
[I_V-\frac{\bar V_i \bar V_i^T}{\bar V_i^T \bar V_i}]\dot{\bar{V_i}}^\prime & \text{otherwise},
\end{cases}\label{eqn:V_bounded}
\end{gather}
where $I_W \in \Re^{N_{2_i}+1\times N_{2_i}+1}$, $I_V \in \Re^{N_{1_i}+1 \times N_{1_i}+1 }$ are identity matrices, and $\norm{}$ indicates Frobenius norm of a matrix.
These correspond to the projection of the following adaptive law to a bounded region satisfying \eqref{eqn:NNbound}~\cite{IoannouRobustadaptiveComtrol}: 
\begin{gather}
\dot{\bar{W_i}}^\prime = -\gamma_{w_i}[\sigma(z_i)a_i^T- \sigma^\prime(z_i)z_ia_i^T]-\kappa_i \gamma_{w_i} \bar W_i,\label{eqn:Wdot}\\
\dot{\bar{V_i}}^\prime = -\gamma_{v_i}x_{nn_i}[ \sigma^\prime(z_i)^T\bar W_ia_i]^T-\kappa_i \gamma_{v_i} \bar V_i,\label{eqn:Vdot}\\
a_1=e_v+c_1 e_x,\quad a_2=e_\Omega+c_2  e_R,\label{eqn:a1a2}
\end{gather}
for positive adaptive gains and parameters $\gamma_{w_i}, \gamma_{v_i}, \kappa_i, c_1, c_2\in \Ree^{+}$.



The proposed design of the adaptive law is based on the following expression of the estimation error. 
Let the errors in the weighting parameters be denoted by 
\begin{gather}
\tilde{W_i}=W_i-\bar W_i,\quad \tilde{V_i}=V_i-\bar V_i.\label{eqn:errorNN}
\end{gather}
The output error of the neural network be $\tilde{\Delta}_i=\Delta_i-\bar{\Delta}_i$ can be written as
\begin{gather}
\tilde{\Delta}_i=\tilde W_i^T [\sigma(\bar{z}_i)-\sigma^\prime(\bar{z}_i)\bar{z}_i]+\bar W_i^T \sigma^\prime(\bar{z}_i)\tilde{z}_i-w_i,\label{eqn:OENN}\\
w_i=-\tilde W_i \sigma^\prime(\bar{z}_i) z_i-W_i^T\mathcal{O}(\tilde z_i)-\varepsilon(x_{nn_i}),\label{eqn:wi}\\
\mathcal{O}(\tilde z_i) =\sigma(z_i)-\sigma(\bar{z}_i)-\sigma^\prime(\bar{z}_i) \tilde{z}_i,\label{eqn:O}
\end{gather}
where $\tilde{z}_i= \tilde{V}_i^T x_{nn_i}$.
Further, it can be shown that $w_i$ is bounded by
\begin{gather}
\norm{w_i}\leq C_{1_i}+\norm{\tilde{Z}_i}(C_{2_i}+C_{3_i} \norm{x_{1_i}}+C_{4_i} \norm{x_{2_i}}),
\end{gather}
where $C_{k_i}, k\in{1,\ldots,4}$ are positive constants, and $\tilde Z_i=\mathrm{diag}[\tilde W_i,\tilde V_i]\in \Ree^{N_{2_i}+N_{1_i}+2, N_{2_i}+N_{3_i}}$~\cite{LeeKimJGCD01}.

The resulting stability properties of the proposed control system are summarized as follows.

\begin{prop}{}\label{prop:NN_prop}
Consider the control force $f$ and moment $M_c$ defined at \refeqn{f}, \refeqn{Mc}. Suppose that the initial condition satisfies
	\begin{align}
	\Psi(R(0),R_d(0))\leq \psi_1 < 1,\quad 
	\norm{e_x(0)}<e_{x_{max}},
	\end{align}
	for fixed constants $\psi_1$ and $e_{x_{max}}$. 
	There exist the values of the controller parameters such that all of the tracking errors of the quadrotor UAV, as well as the neural network weight errors are uniformly ultimately bounded. 
\end{prop}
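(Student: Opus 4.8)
The plan is to assemble a single Lyapunov function combining the translational errors, the rotational errors, and the network weight errors, and to show its derivative is negative definite outside a small ball whose radius is set by the gains. Take
\begin{align*}
\mathcal{V}=\mathcal{V}_x+\mathcal{V}_R+\sum_{i=1}^{2}\parenth{\tfrac{1}{2\gamma_{w_i}}\norm{\tilde W_i}^2+\tfrac{1}{2\gamma_{v_i}}\norm{\tilde V_i}^2},
\end{align*}
with the translational cross-term energy $\mathcal{V}_x=\tfrac12 k_x\norm{e_x}^2+\tfrac12 m\norm{e_v}^2+c_1 m\,e_x\cdot e_v$ and the rotational one $\mathcal{V}_R=k_R\Psi(R,R_c)+\tfrac12 e_\Omega\cdot Je_\Omega+c_2\,e_R\cdot Je_\Omega$ from~\cite{LeeLeoPICDC10}. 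On the set $\mathcal{D}=\braces{\Psi(R,R_c)\le\psi_2<1,\;\norm{e_x}<e_{x_{max}}}$ and for small enough $c_1,c_2$, each of $\mathcal{V}_x,\mathcal{V}_R$ is bounded between positive-definite quadratic forms in $(\norm{e_x},\norm{e_v})$ and $(\norm{e_R},\norm{e_\Omega})$; the weight terms need no restriction because the projections \eqref{eqn:W_bounded}--\eqref{eqn:V_bounded} enforce $\norm{\bar W_i}\le W_{M_i}$ and $\norm{\bar V_i}\le V_{M_i}$, so $\norm{\tilde W_i}$, $\norm{\tilde V_i}$, and $\norm{\tilde Z_i}$ stay a priori bounded.

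First I would reduce the closed loop to error form. Substituting the thrust \eqref{eqn:f} and \eqref{eqn:A} into \eqref{eqn:EC2} gives $m\dot e_v=-k_x e_x-k_v e_v+X-\tilde\Delta_1$, where $X=(A\cdot Re_3)Re_3-A$ vanishes when $Re_3=b_{3c}$, satisfies $\norm{X}\le\norm{A}\,\eta(e_R)$ with $\eta(e_R)\to0$ as $e_R\to0$ and $\eta$ bounded on $\mathcal{D}$, and $\norm{A}\le B_1+k_x\norm{e_x}+k_v\norm{e_v}$ by \eqref{eqn:acceleration_bound}. Substituting the moment \eqref{eqn:Mc} into \eqref{eqn:EC4} with $e_\Omega=\Omega-R^TR_c\Omega_c$, the $\dot\Omega_c$ feedforward cancels exactly and the attitude error equation becomes the clean, decoupled $J\dot e_\Omega=-k_R e_R-k_\Omega e_\Omega-\tilde\Delta_2$, together with $\dot e_R=C(R_c^TR)e_\Omega$, $\norm{C(\cdot)}\le1$. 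Hence the only feedback coupling between the two loops is the term $X$ in the $e_v$ equation, which is what forces the attitude loop to be made fast relative to the translational one.

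Next I would differentiate $\mathcal{V}$ along these dynamics. The mechanical part reproduces the computation of~\cite{LeeLeoPICDC10}: it produces $-k_x\norm{e_x}^2-k_v\norm{e_v}^2-k_R\norm{e_R}^2-k_\Omega\norm{e_\Omega}^2$, plus sign-indefinite cross terms controlled in size by $c_1,c_2$, plus $(e_v+c_1 e_x)\cdot X$ bounded via $\norm{X}\le\norm{A}\eta(e_R)$, plus the two residuals $-a_1\cdot\tilde\Delta_1$ and $-a_2\cdot\tilde\Delta_2$ with $a_1=e_v+c_1 e_x$, $a_2=e_\Omega+c_2 e_R$ as in \eqref{eqn:a1a2} (these $a_i$ enter precisely because of the $c_1 e_x$, $c_2 e_R$ cross terms). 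For the network part I would invoke the standard fact that replacing $\dot{\bar W}_i,\dot{\bar V}_i$ by their projected versions \eqref{eqn:W_bounded}--\eqref{eqn:V_bounded} cannot increase $\dot{\mathcal{V}}$, so it suffices to work with $\dot{\bar W}'_i,\dot{\bar V}'_i$. Inserting the representation \eqref{eqn:OENN} of $\tilde\Delta_i$ into $-a_i\cdot\tilde\Delta_i$ and \eqref{eqn:Wdot}--\eqref{eqn:Vdot} into $\tfrac{1}{\gamma_{w_i}}\tr{\tilde W_i^T\dot{\bar W}'_i}+\tfrac{1}{\gamma_{v_i}}\tr{\tilde V_i^T\dot{\bar V}'_i}$, the terms linear in $\tilde W_i$ and $\tilde V_i$ cancel identically by construction of the adaptive law; the $\kappa_i$-modification, after writing $\bar W_i=W_i-\tilde W_i$ and applying Young's inequality, leaves $-\tfrac{\kappa_i}{2}(\norm{\tilde W_i}^2+\norm{\tilde V_i}^2)+\tfrac{\kappa_i}{2}(W_{M_i}^2+V_{M_i}^2)$; and the only surviving driving term is $a_i\cdot w_i$, which is controlled by $\norm{w_i}\le C_{1_i}+\norm{\tilde Z_i}(C_{2_i}+C_{3_i}\norm{x_{1_i}}+C_{4_i}\norm{x_{2_i}})$ together with $\norm{x_{1_1}}\le\norm{e_x}+\sup_t\norm{x_d}$, $\norm{x_{2_1}}\le\norm{e_v}+\sup_t\norm{\dot x_d}$, boundedness of $\norm{\Omega_c}$ on $\mathcal{D}$ (hence of $\norm{\Omega}=\norm{x_{2_2}}$), and $\norm{E(R)}$ bounded.

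Collecting everything, $\dot{\mathcal{V}}\le-\zeta^T\mathcal{M}\zeta+b^T\zeta+\delta$ with $\zeta=[\norm{e_x},\norm{e_v},\norm{e_R},\norm{e_\Omega},\norm{\tilde W_1},\norm{\tilde V_1},\norm{\tilde W_2},\norm{\tilde V_2}]^T$, the constant $\delta$ lumping $\epsilon_N$, the $\tfrac{\kappa_i}{2}(W_{M_i}^2+V_{M_i}^2)$, and the $\zeta$-free part of $a_i\cdot w_i$, and $\mathcal{M}$ symmetric with diagonal dominated by $k_x,k_v,k_R,k_\Omega,\kappa_i$ and off-diagonal entries scaling with $c_1,c_2$, $B_1$, $\sup_t\norm{x_d}$, $\sup_t\norm{\dot x_d}$, and the $C_{k_i}$; choosing $c_1,c_2$ small and $k_x,k_v,k_R,k_\Omega$ large enough makes $\mathcal{M}\succ0$, so $\dot{\mathcal{V}}<0$ whenever $\norm{\zeta}>r^\ast\sim\sqrt{\delta/\lambda_m(\mathcal{M})}$, which together with an invariance check — given $\Psi(R(0),R_c(0))\le\psi_1<1$ and $\norm{e_x(0)}<e_{x_{max}}$, the gains keep the sublevel set $\braces{\mathcal{V}\le\mathcal{V}(0)}$ strictly inside $\mathcal{D}$, so that $A\ne0$ and hence $b_{3c}$ and the whole controller remain well-defined — gives the claimed uniform ultimate boundedness of all tracking and weight errors, with an ultimate bound proportional to $r^\ast$ that shrinks as the gains grow and $\epsilon_N\to0$ (the latter guaranteed by the universal approximation theorem). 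The main obstacle is making one choice of parameters meet two competing demands simultaneously: positive-definiteness of $\mathcal{M}$ forces explicit lower bounds on $k_x,k_v$ in terms of the $C_{k_i}$ and $\sup_t\norm{x_d}$, $\sup_t\norm{\dot x_d}$ because $a_1\cdot w_1$ is genuinely quadratic in $(\norm{e_x},\norm{e_v})$; the transient-invariance condition $\braces{\mathcal{V}\le\mathcal{V}(0)}\subset\mathcal{D}$ couples those choices to $\psi_1$, $e_{x_{max}}$, and the attitude gains; and the boundedness of $\Omega_c,\dot\Omega_c$ — needed both for $\norm{\Omega}$ in the $w_i$ bound and for well-posedness of $M_c$ — is not self-contained, since $R_c,\Omega_c,\dot\Omega_c$ are built from $A$ and its time derivatives, hence from $\dot{\bar\Delta}_1$, i.e.\ from the adaptive law itself, so these bounds have to be closed on the invariant set $\mathcal{D}$.
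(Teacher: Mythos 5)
Your proposal follows essentially the same route as the paper's proof: the same composite Lyapunov function with cross terms $c_1 m\,e_x\cdot e_v$ and $c_2\,e_R\cdot Je_\Omega$ plus the weight-error terms, the same backstepping error dynamics with the attitude-coupling term $\mathcal{X}$ bounded by $(k_x\norm{e_x}+k_v\norm{e_v}+B_1)\norm{e_R}$, the same cancellation of the $\tilde W_i,\tilde V_i$-linear terms by the adaptive law leaving the $\kappa_i$-damping and the $a_i^Tw_i$ residual, and the same final reduction to positive-definiteness of a quadratic form in the error norms yielding $\dot{\mathcal{V}}\le-\nu\mathcal{V}+C_5$ and the ultimate bound. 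The only cosmetic difference is that you assemble one $8\times 8$ matrix where the paper uses three $3\times 3$ blocks, and you are somewhat more explicit than the paper about the projection step and the invariance check keeping the trajectory inside the region where $b_{3c}$ is well-defined.
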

%

\begin{proof}
	See Appendix.
\end{proof}

This theorem implies that arbitrary disturbance forces and moments can be mitigated by adaptive neural networks that are adjusted online to cancel out the disturbances. 
This does not achieve stability in the sense of Lyapunov or attractivity, as the universal approximation theorem implies approximation up to a small bounded error. 
However, the ultimate bound of the tracking errors can be adjusted by increasing the controller gains according to~\refeqn{D}.
As such, there should be a proper trade-off between the size of the ultimate bound and the magnitude of the rotor thrust. 
Compared with the conventional adaptive control, it is not required that the uncertain term follows the form of linear regression. 
As such, the proposed adaptive control scheme can deal with a large class of unstructured uncertainties. 
In contrast to nonlinear robust controls, such as presented in~\cite{LeeLeoAJC13}, there is no issue chattering in control inputs.  

\section{Numerical Example}

The efficacy of the proposed control system is illustrated by a numerical example. 
In particular, we consider a scenario where the quadrotor is flying under wind gusts. 
To simulate the effects of wind disturbances, we first present an aerodynamic model of a quadrotor, inspired by the literature in the helicopter rotor dynamics.

\subsection{Quadrotor Dynamics under Wind Disturbance}\label{Sec:Wind_effects}

Suppose that the wind vector presented in the inertial frame is denoted by $v_w\in\Ree^3$.
The relative wind on the $j$-th rotor in the body-fixed frame is denoted by $v_{w_j}=[u_{1_j},u_{2_j},u_{3_j}]^T$. 
It is caused by the wind vector and the quadrotor translational and rotational velocities, as follows
\begin{equation}
v_{w_j}=R^T (v_w-v)+ \hat\Omega r_j.\label{eqn:vrel}
\end{equation}

The external resultant force acting on the quadrotor is given by
\begin{gather}
U_e=mge_3-C_d ||v-v_{w}|| (v-v_{w})+R \Sigma_{j=1}^4 T_j d_j,\label{eqn:Uewind}
\end{gather}
where the second term on the right hand side represents the drag force acting on the center of mass, and $C_d\in\Ree$ is the drag coefficient. 

The variable $T_j$ represents the thrust for the $j$-th rotor, given by
\begin{gather}
T_j=C_{T_j}\rho A_p (r_p\omega_j)^2,
\end{gather} 
where $\rho\in\Ree$ is the air density and the rotor sweeping area is given by $A_p=(\pi r_p)^2$ for the radius $r_p$.
The rotating speed is shown by $\omega_j$. 
The parameter $C_{T_j},\rho\in\Ree$ represents the thrust coefficient, and it follows the following expression that models the effects of induced velocity~\cite{Padfield2007}: 
\begin{align}
C_{T_j}&=\frac{s C_{l\alpha}}{2}[\theta_0(\frac{1}{3}+\frac{\mu_{x_j}^2}{2})-\frac{1}{2}(\lambda_j+\mu_{z_j})],\label{eqn:CT}\\
\lambda_j&=\frac{C_{T_j}}{2\sqrt{\mu_{x_j}^2+(\lambda_j+\mu_{z_j})^2}},\\
\mu_{x_j}&=\frac{\sqrt{u_{1_j}^2+u_{2_j}^2}}{\omega_j r_p},\\
\mu_{z_j}&=\frac{u_{3_j}}{\omega_j r_p}.
\end{align}
where $\lambda_j\in \Ree$ is the inflow ratio, which is the induced air velocity over by the tip speed, and
$s=\frac{N_b c}{\pi r_p}\in\Ree$ is the solidity ratio which is the approximated blade area over the blade sweeping area. 
Next, $c,N_b$ represents the blade chord, and the number of blades for one rotor respectively. 
The blade lift curve slope and blade pitch angle are shown by $C_{l\alpha},\theta_0\in\Ree$. 
Also, $\mu_{z_j},\mu_{x_j}$ are the perpendicular and parallel
advance ratios to the rotor plane.
As described above, $C_{T_j}$ is defined implicitly. 
Therefore, Newton's iterative is used in the numerical simulation to obtain the thrust coefficient and the inflow ratio.

Next, in~\eqref{eqn:Uewind}, the direction of rotor thrust in the body-fixed frame is denoted by the unit-vector $d_j \in \Sph^2$, and it is computed by
\begin{gather}
d_{j}=\begin{bmatrix}\frac{-\sin{\alpha_j}}{\sqrt{u^2_{1_j}+u^2_{2_j}}} u_{1_j}, \frac{-\sin{\alpha_j}}{\sqrt{u^2_{1_j}+u^2_{2_j}}} u_{2_j},-\cos{\alpha_j} \end{bmatrix}^T,\label{eqn:d_T}
\end{gather}
where the blade flapping angle of the $j$-th rotor is shown by $\alpha_j\in \Ree$. 
If the first and second elements of relative wind become zero, i.e., $u_{1_j},u_{2_j}=0$, then the $d_j=-e_3$, and so there is no thrust component in the $b_1-b_2$ plane. 
Let, $C_{\alpha}\in\Ree$, be the fixed flapping angle coefficient~\cite{Hoffmann2011b,Sydney2013}. Then, the flapping angle can be approximated with
\begin{gather}
\alpha_j=C_{\alpha} \sqrt{u^2_{1_j}+u^2_{2_j}}. \label{eqn:alpha}
\end{gather}

Finally, let the stiffness of the rotor blade be shown by $K_\beta\in\Ree$, and the blade drag coefficient be shown by $C_{D_0}\in\Ree$.
From~\cite{Padfield2007,Hoffmann2011b}, the external resultant moment can be approximated by 
\begin{align}
M_e=&\Sigma_{j=1}^4 r_j \times T_j d_j+(-1)^{j+1}Q_jd_j\nonumber\\
&+\frac{N_b}{2}K_\beta \alpha_j (d_j\cdot e_1+d_j\cdot e_2),\label{eqn:Mewind}\\
Q_j=&C_{Q_j}\rho A_p r_p(r_p\omega_j)^2,\label{eqn:Qwind}
\end{align}
where $C_{Q_j}\in \Ree$ is the torque coefficient~\cite{Padfield2007} given by
\begin{gather}
C_{Q_j}=C_{T_j}(\lambda_j+\mu_{z_j})+\frac{C_{D_0}s}{8}(1+3\mu_{x_j}^2). \label{eqn:CQ}
\end{gather}
In short, $U_e^\prime, M_e^\prime$ in~\eqref{eqn:EC2} and \eqref{eqn:EC4} are replaced by \eqref{eqn:Uewind} and \eqref{eqn:Mewind}, respectively, to simulate the quadrotor dynamics under the effects of winds.  

\begin{figure}
	\centerline{
		\subfigure[Position ($\si{\meter}$)]{\includegraphics[width=0.65\columnwidth]{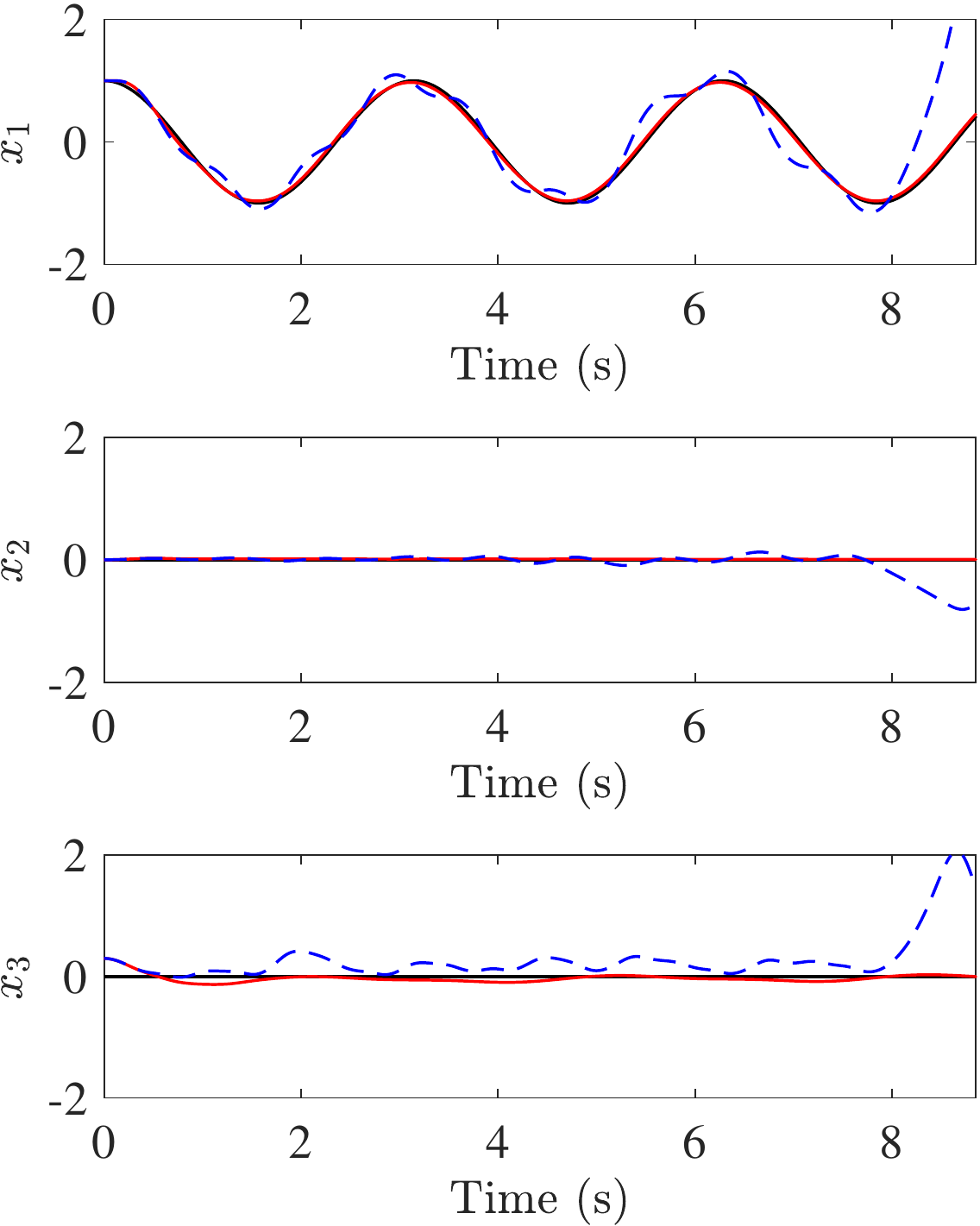}}}
	\centerline{
		\subfigure[Translational velocity ($\si{\meter \per \second}$)]{\includegraphics[width=0.65\columnwidth]{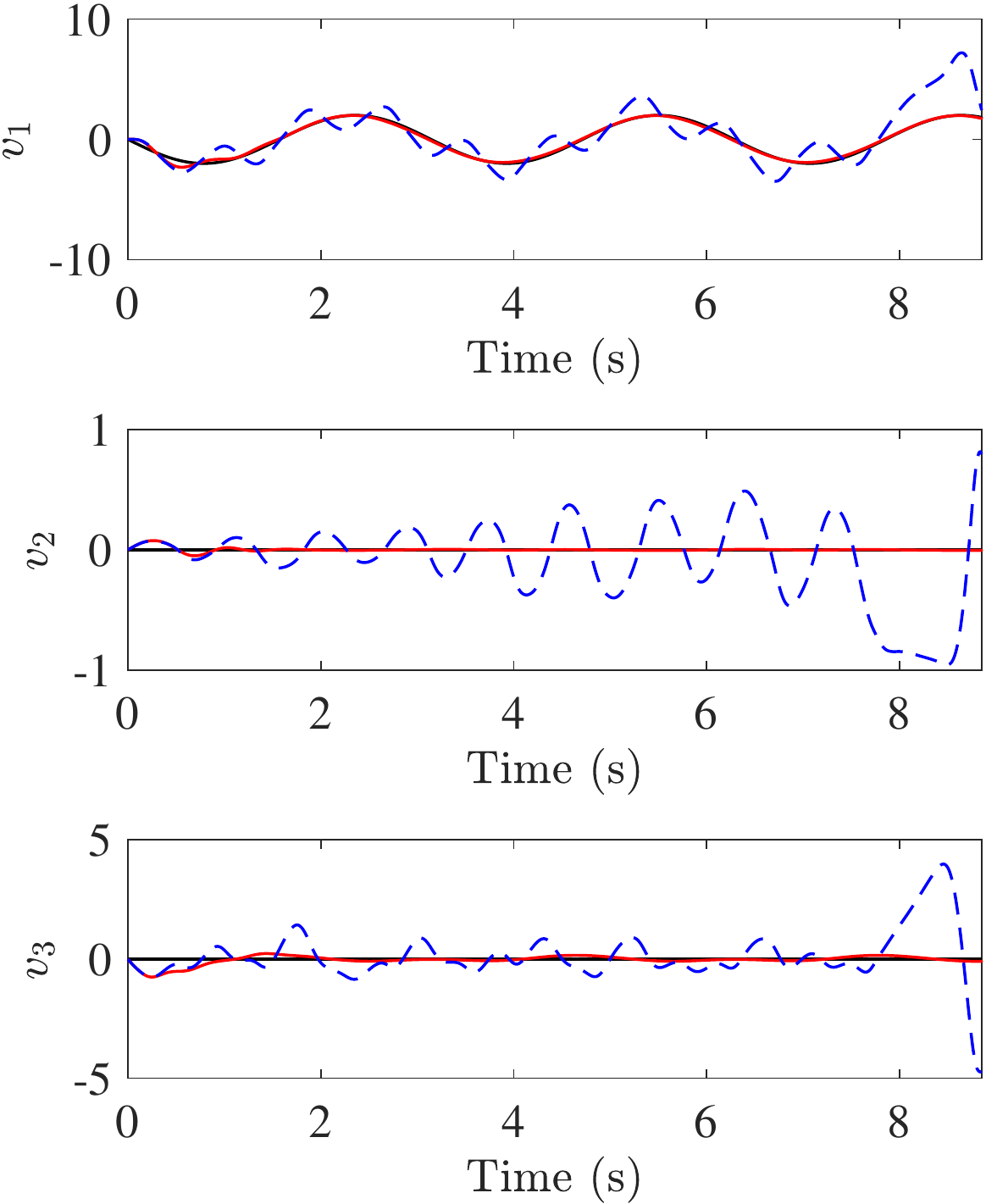}}
	}
	\centerline{
	\subfigure[Thrust ($\si{\newton}$)]{\includegraphics[width=0.65\columnwidth]{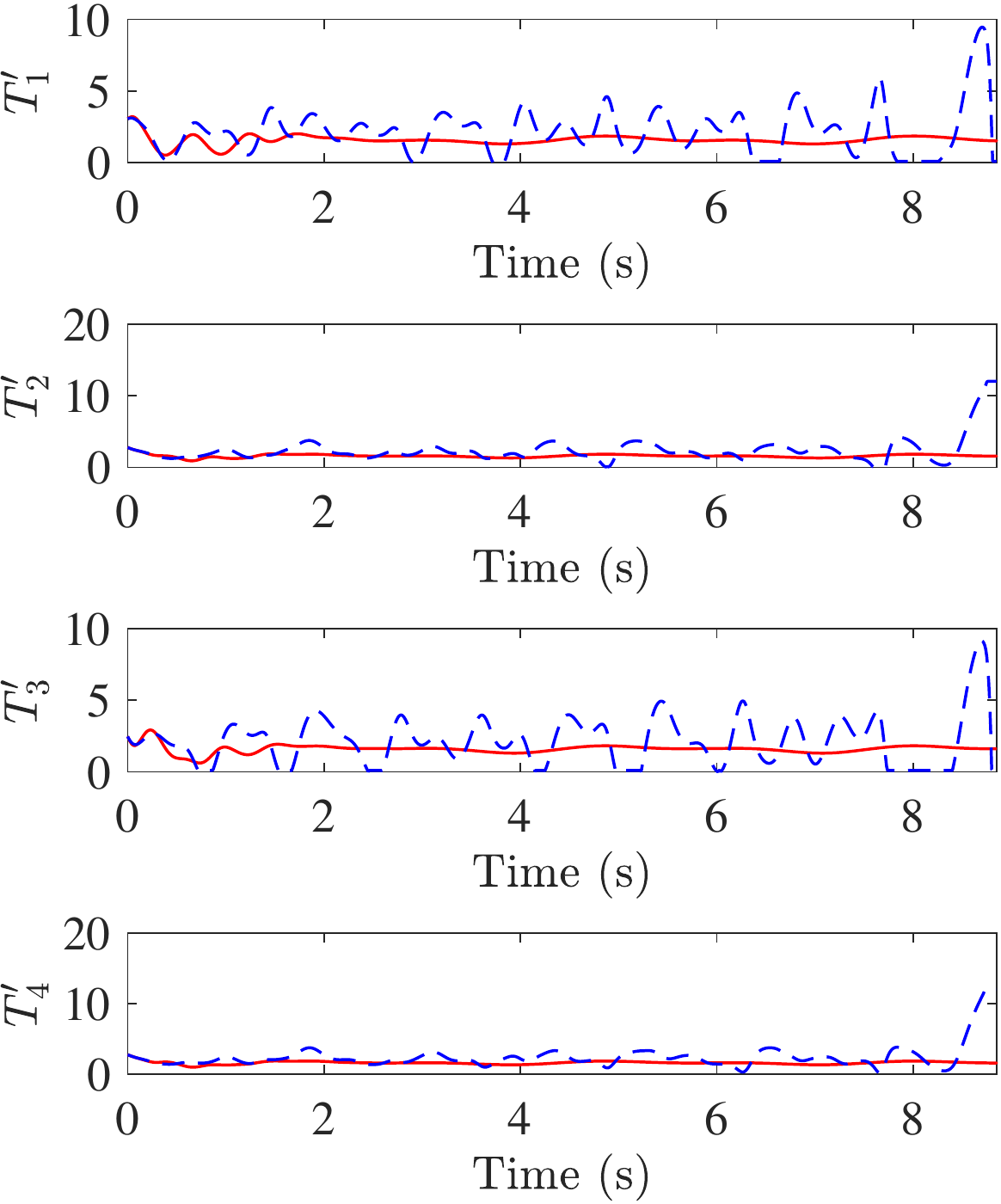}}}
	\caption{Position, velocity and thrust trajectories (desired:black, adaptive controller:red, without disturbance rejection~\cite{LeeLeoPICDC10}:blue)}
	\label{fig:results1}
\end{figure}
\begin{figure}
	\centerline{
	\subfigure[Attitude]{\includegraphics[width=1\columnwidth]{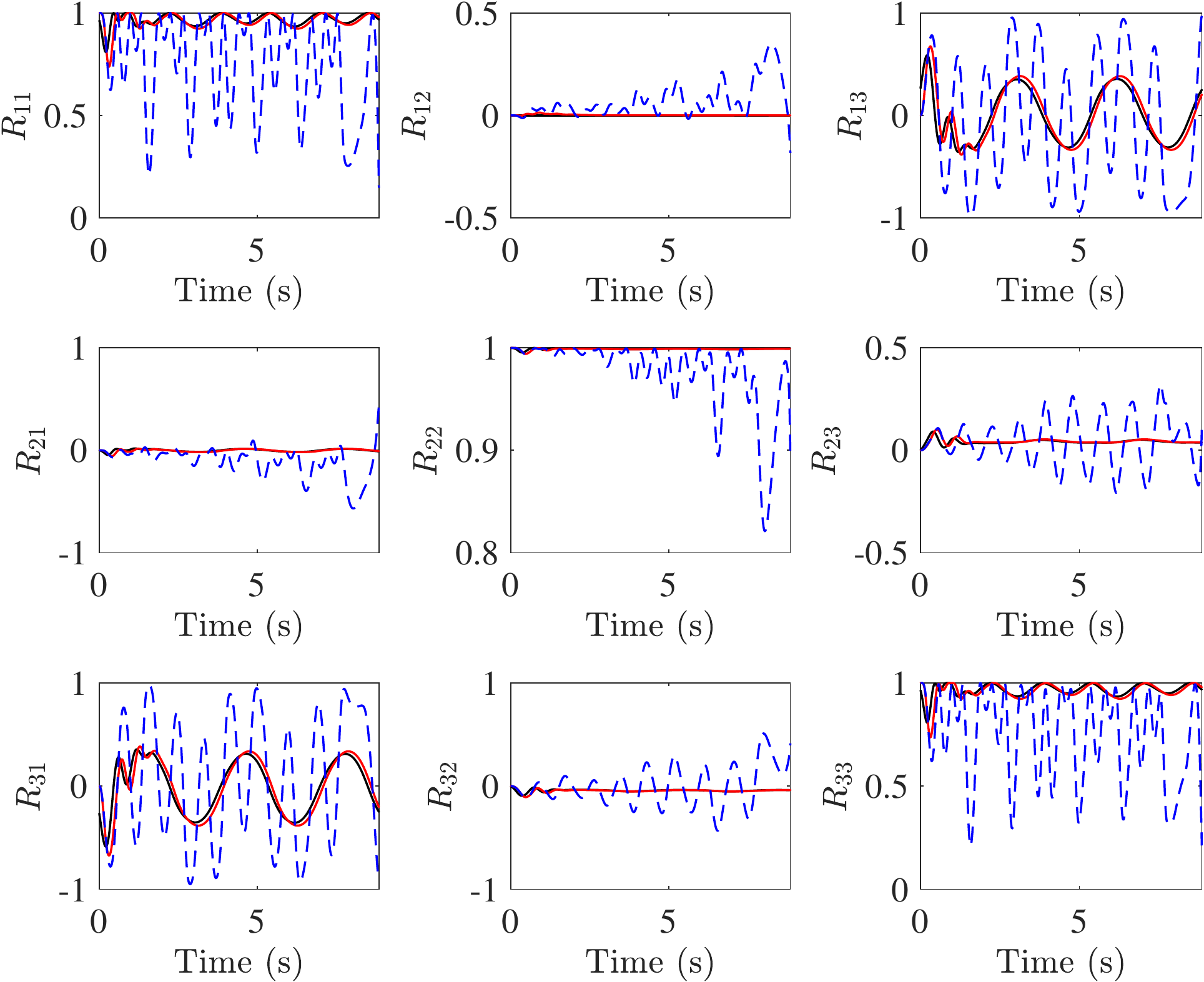}}}
\centerline{
	\subfigure[Angular velocity ($\si{\radian \per \second}$)]{\includegraphics[width=0.65\columnwidth]{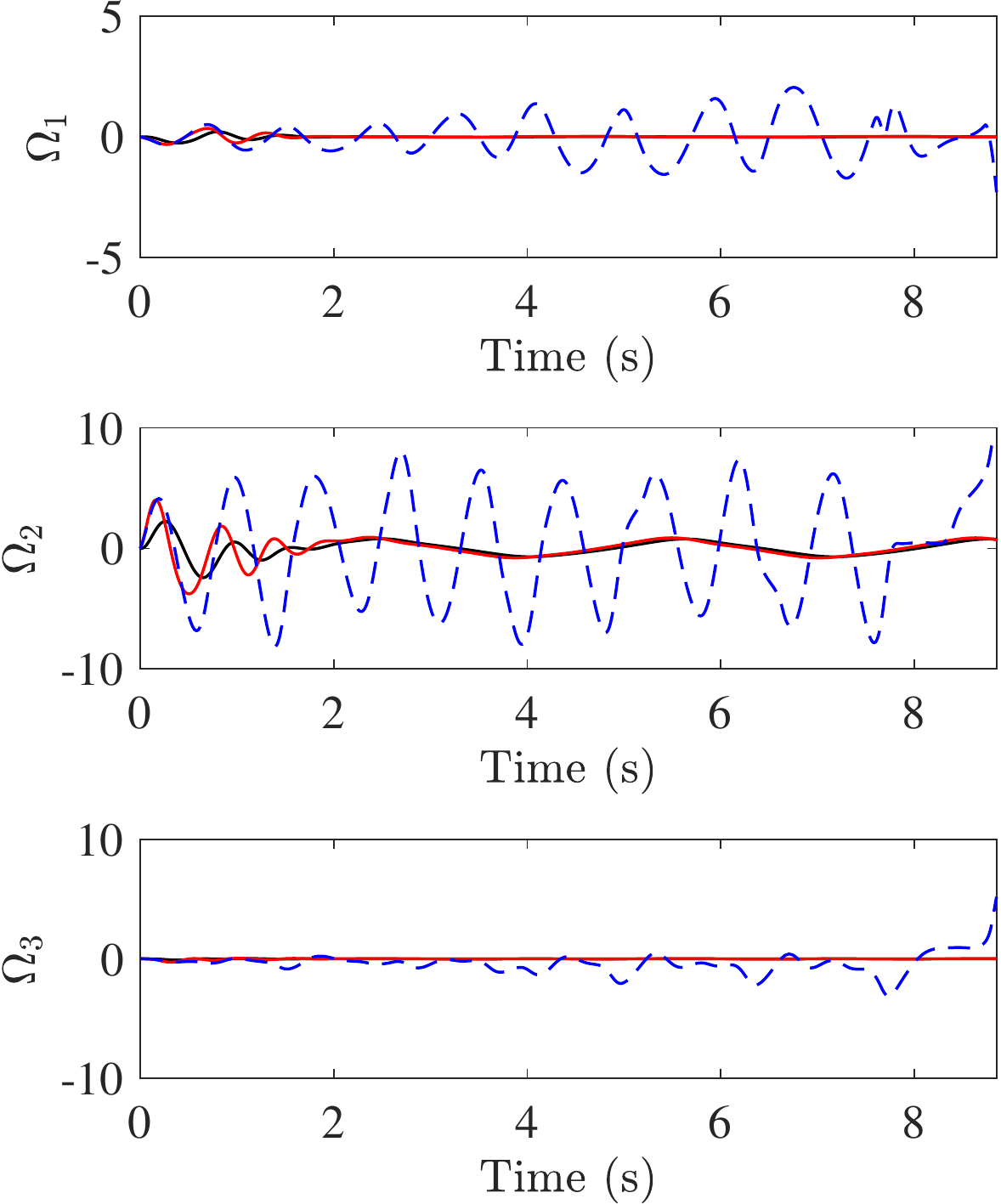}}
}
	\caption{Attitude and angular velocity trajectories (desired:black, adaptive controller:red, without disturbance rejection~\cite{LeeLeoPICDC10}:blue)}
	\label{fig:results2}
\end{figure}

\subsection{Position Tracking Control}

The parameters of the quadrotor considered in the numerical simulation are as follows. 
\begin{gather*}
m=\SI{0.755}{\kilogram}, \, d_h=0.169\si{\meter}, \, d_v=0.1 \si{\meter}, \\
J=10^{-2} \diag{0.557,0.557,1.05}\si{\kilogram\meter\squared},\,T_{max}=7\si{\newton},\\
\,C_{TQ}=1.67\times 10^{-2} \si{\meter},\, C_\alpha=1\times 10^{-3} \si{\radian \second \per \meter},\\ C_d=0.01\si{\kilo \g \per \meter},\,
c=0.01\si{\meter},\, N_b=2,\,r_p=0.1016\si{\meter}.
\end{gather*}
Initially the quadrotor is at rest as specified by
\begin{gather*}
x_0=[0,0,0.3]^T\,\si{m}, \, 
v_0=[0,0,0]^T\,\si{\meter \per \second},\\
R_0=I_{3\times 3}, \, 
\Omega_0=[0,0,0]^T\si{\radian \per \second}.
\end{gather*}

The controller gains are chosen as
\begin{gather*}
k_x=12.08,\, k_v=4.2280,\,k_R=0.3780,\,k_\Omega=0.0882,\label{eqn:geo_gains}\\
\gamma_{w_1}=1  ,\, \gamma_{v_1}=0.05  ,\,\kappa_1=0.00001  ,\\
\gamma_{w_2}=1  ,\, \gamma_{v_2}=0.01  ,\,\kappa_2=0.001,\\
N_{2_1}=N_{2_2} =3.
\end{gather*}

The desired trajectory is a sinusoidal oscillation along the first inertial axis. 
More specifically,
\begin{gather}
x_{d}(t)=[\cos{2t},\,0,\,0]^T\si{\meter},
\end{gather}
and the desired direction of the first body-fixed axis is
\begin{gather}
b_{1d}=[1,0,0]^T.\label{eqn:desired}
\end{gather}

It is assumed that the wind is blowing in the inertial frame as follows
\begin{gather}
v_w=[3,5,0.5]^T\si{\meter \per \second}.
\end{gather}

The corresponding simulation results are presented in Figure~\ref{fig:results1}--\ref{fig:results2}.
To illustrate the advantage of the adaptive controller, we also present the simulation results without using neural network~\cite{LeeLeoPICDC10}.
Specifically, the total thrust and torque are given by \refeqn{A}, \refeqn{f}--\refeqn{Mc} with $\bar \Delta_1,\bar \Delta_2=0_{3\times 1}$.
In Figure~\ref{fig:results1}--\ref{fig:results2}, the desired trajectory, the results of the proposed adaptive controller, and the simulation results of the controller in~\cite{LeeLeoPICDC10} are denoted by the black solid line, the red solid line, and the blue dashed line, respectively. 
It is shown that in the absence of adaptive neural network terms, the controlled trajectories diverges as time increases. 
However, the proposed controller successfully mitigates the wind effects for both  the translational dynamics and the rotational dynamics.
Furthermore, as shown in Figure~\ref{fig:results1}, the thrust at each rotor remains in the acceptable range, well under the maximum thrust  $T_{max}=7\si{\newton}$.

\section{Quadrotor UAV Flight Experiments}\label{chap:Experiment} 

In this section, the proposed geometric adaptive controller is validated via flight experiments with a quadrotor unmanned aerial vehicle that is designed and developed from the ground by the authors.
To demonstrate the capability to reject disturbances, flight experiments are performed under winds generated by an industrial fan. 
First, we describe the hardware and software configurations. 
Then, we present experimental results in two sections, including attitude and flight trajectory tracking. 
Additional experimental results are available in~\cite{BisPhd18}.

\subsection{Hardware Configuration}
The quadrotor UAV platform developed in Flight Dynamics and Control Laboratory (FDCL) at The George Washington University is shown in Figure~\ref{fig:Quad_photo}. 

\begin{figure}
	\centering 
	\includegraphics[width=0.5\columnwidth]{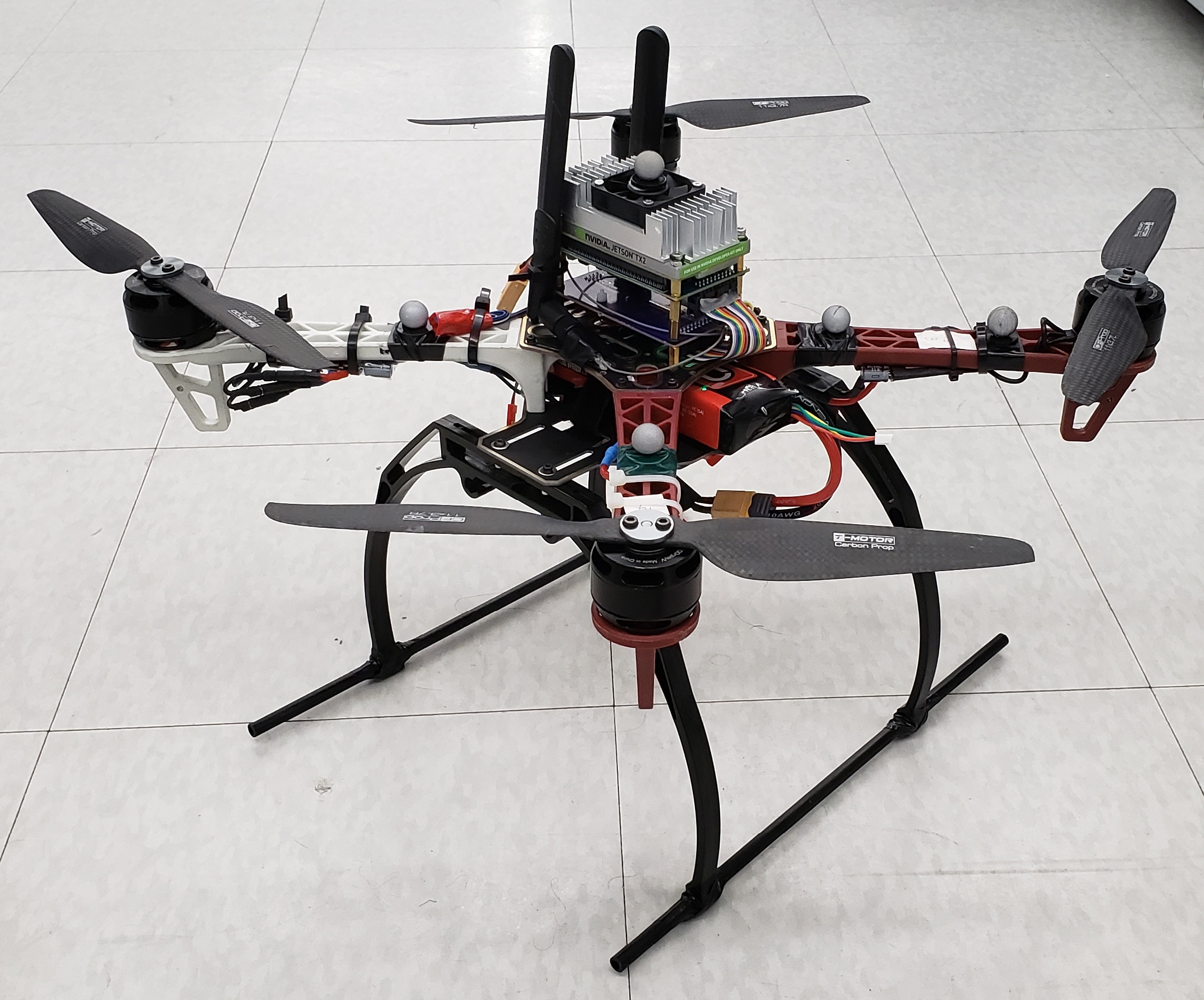}
	\caption{Quadrotor UAV developed in the Flight Dynamics and Control Laboratory} \label{fig:Quad_photo}
\end{figure}

It has four brush-less DC electrical motors (700 KV T-Motor) paired with  11 $\times$ 3.7 carbon fiber propellers.
To control the rotational speed of motors, each one is connected to an electronic speed control (MikroKopter BL-Ctrl v2) which receives the commands through Inter-integrated Circuit (I2C) protocols from an onboard computer.

All computations are done on an embedded system-on-module (NVIDIA Jetson TX2) running a Linux operating system (Ubuntu 16.04 with JetPack 3.3). 
The onboard computer is attached to an expansion board (Connect Tech’s Orbitty Carrier), which is connected to a custom-designed printed circuit board.
This board houses a 9-axis Inertial Measurement Unit (IMU) (VectorNav VN100 IMU) and I2C connection headers for the motor speed controller. 
The computing module communicates with a ground server (Macbook Pro) through Wi-Fi, to receive flight commands and data logging.
A single \SI{14.8}{\volt} Li-Po battery provides power for the motors and the onboard computer.
An optical motion capture system (VICON) measures the position and orientation of the quadrotor, and send their data through Wi-Fi to the onboard computer, which is fed to an estimator to integrate the measurements from IMU and VICON, and to determine the velocity.

The mass and the inertia matrix is measured by building a model in SOLIDWORKS are as follows
\begin{gather*}
    J=\mathrm{diag}[0.02, 0.02, 0.04] \si{kgm^2},\;
m= 2.1 \si{kg}, \; d_h=0.23 \si{m}.\label{eqn:quad_specs_exper}
\end{gather*}

\subsection{Flight Software}

Flight software is a multi-thread program written in C++ using POSIX thread library to execute multiple tasks simultaneously.
This includes threads for data log, communication, estimation, and control with the average frequencies of 100, 60, 100, 400 Hz respectively. 
Additional software is developed for the ground server that transmits commands to the quadrotor and receives the flight data from the onboard computer to monitor the quadrotor responses. 
A graphical user interface is designed using the Glade library to monitor the flight data and to enhance user interactions.  
The flight data is saved in the host computer for post-processing.

\section{Attitude Trajectory Tracking Control}\label{sec:att_experiment}

We first perform experiments for attitude controls, after attaching the quadrotor to a spherical joint to prevent any translation.
In particular, the spherical rolling joint model no. SRJ012C-P from Myostat Motion control Inc is affixed to an aluminum bar, as illustrated in~\reffig{stand}.
It allows up to 30 degrees in roll and pitch, and unlimited yaw.

As the spherical joint is below the mass center, this setup resembles the dynamics of an inverted rigid body pendulum, and there is an additional gravitational torque in~\refeqn{EC4}. 
As such, the control moment in~\refeqn{Mc} is augmented by a canceling term. 
Also, the moment of inertia is translated to the center of rotation~\cite{BisPhd18}.


\begin{figure}
	\centering 
	\includegraphics[width=0.3\columnwidth]{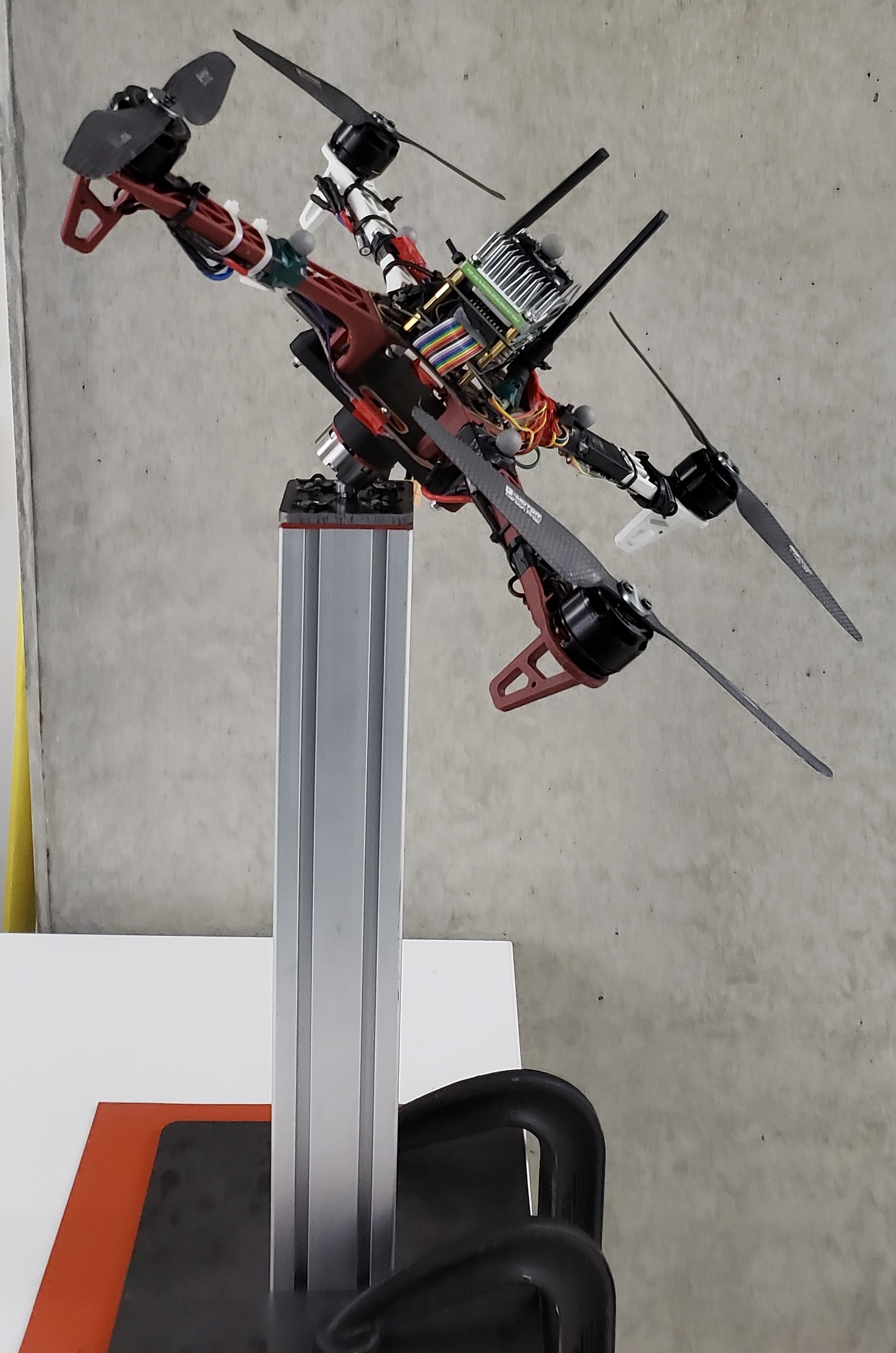}
	\caption{Quadrotor with the spherical joint setup for the attitude dynamics experiments} \label{fig:stand}
\end{figure}

To generate wind disturbance in the indoor flight test facility, an industrial pedestal fan, Air King fan model 9175 with the maximum air speed of 8780 Cubic Feet per Minute (CFM) is placed.
Wind blowing toward the quadrotor is measured with a TriSonica-Mini 3-dimensional sonic anemometer at several locations as shown in Figure \ref{fig:wind_map_attitude}. 
Most of the wind is generated along the $-e_2$ direction in the inertial frame, and there are nontrivial turbulence along every direction. 

We consider two cases: attitude hovering and attitude tracking, and each case is compared with the geometric control without any disturbance compensation presented in~\cite{LeeLeoPICDC10}.

\begin{figure}
	\centerline{
		\subfigure[Schematic of quadrotor UAV attitude test]{\includegraphics[width=0.7\columnwidth]{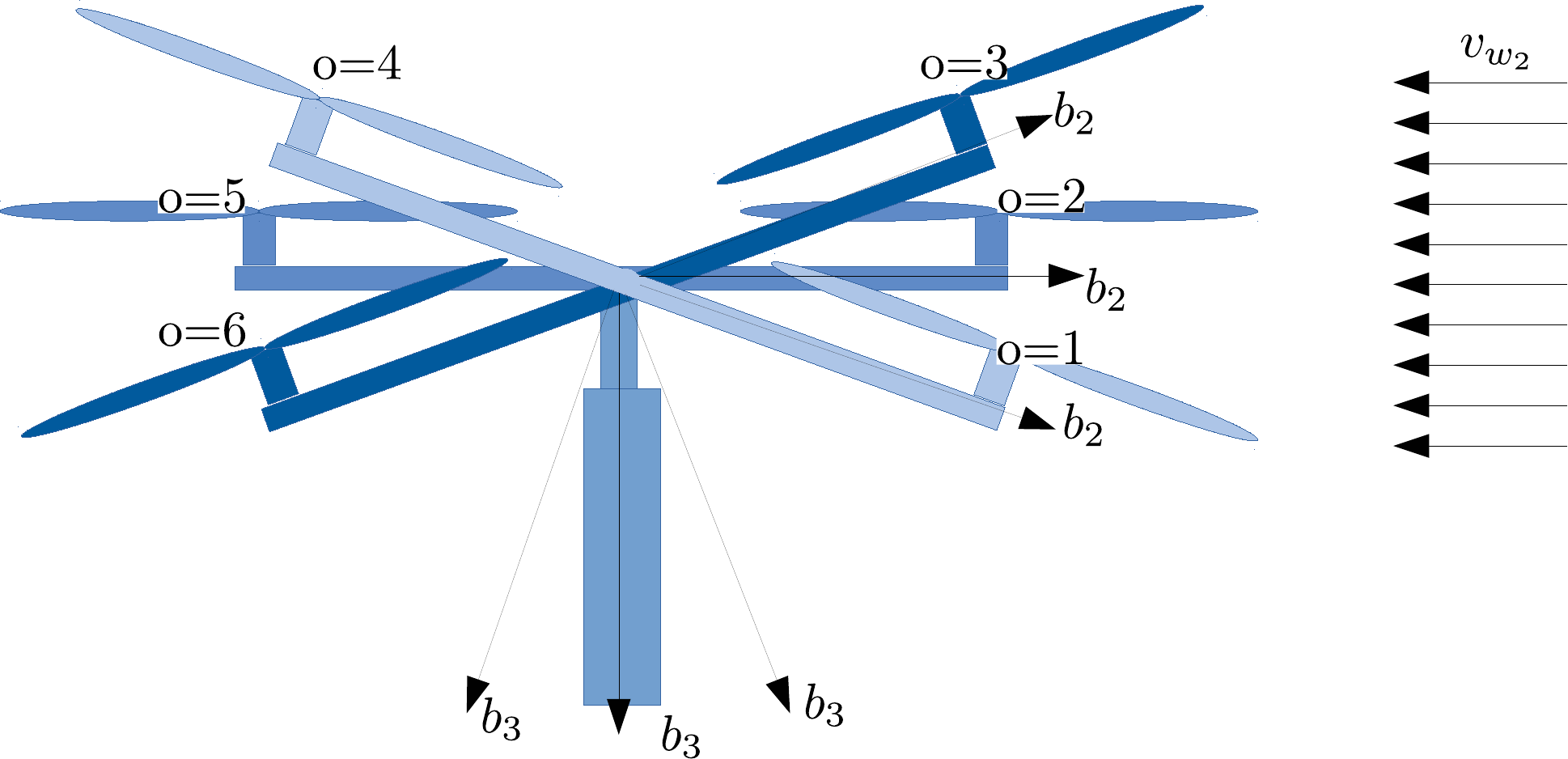}}
	}
	\centerline{
		\subfigure[Wind ($\si{\meter \per \second}$) versus time ($\si{\second}$) for different orientations in front of the fan]{\includegraphics[width=0.8\columnwidth]{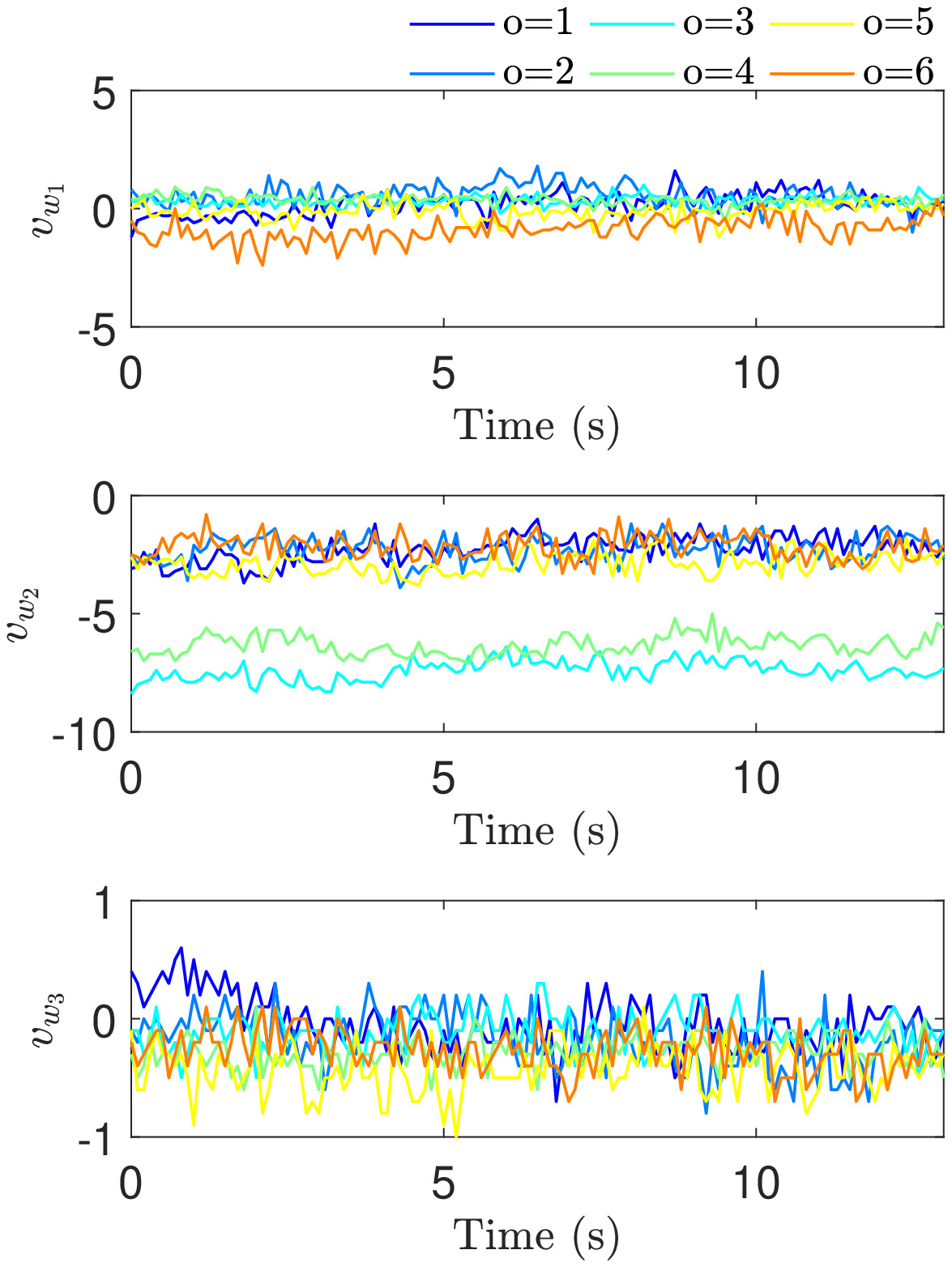}}
	}
\caption{Wind measurement for the attitude trajectory test. (At Figure (c), generated with Matlab \textit{boxplot} function, outliers are shown by '+' symbol, and are not included inside whiskers.) }
\label{fig:wind_map_attitude}
\end{figure}

\subsection{Geometric Adaptive Control for Hovering}\label{Sec:hover_attitude}

The desired attitude is $R_d(t)=I_{3\times 3}$.
The controller gains and parameters are chosen as
\begin{gather*} 
k_R = 1.2, \quad k_\Omega= 0.6,\\
\gamma_{w_2}=1  ,\quad \gamma_{v_2}=0.01  ,\quad \kappa_2=0.001, \quad c_2=1.
\end{gather*}
The number of neurons in the first, hidden and output layers are 
\begin{gather}
N_{1_2}=6,\quad N_{2_2}=3, \quad N_{3_2}=3.
\end{gather}

In \reffig{NNvsPDvsPID_states_hover}--\reffig{NNvsPDvsPID_errror_hover}, the black line shows the desired trajectories.
The trajectories with and without the disturbance rejection are plotted in red and blue respectively.
It can be seen that wind deteriorates tracking the desired trajectory, especially in the axes $b_1$ and $b_2$. 
However, the proposed geometric adaptive controller successfully reduces the error.

\reffig{experimentphoto2} shows the snapshot of the experiment in the $e_2-e_3$ plane, while wind is blowing toward $-e_2$, and $e_3$ points downward. 
In Figure (a) it is shown that in the absence of wind, both controllers reach the desired orientation. 
However, in Figure (b), wind changes the orientation of the UAV and results in an steady state attitude error in the absence of the adaptive controller.
\begin{figure}
	\centering 
	\includegraphics[width=1\columnwidth]{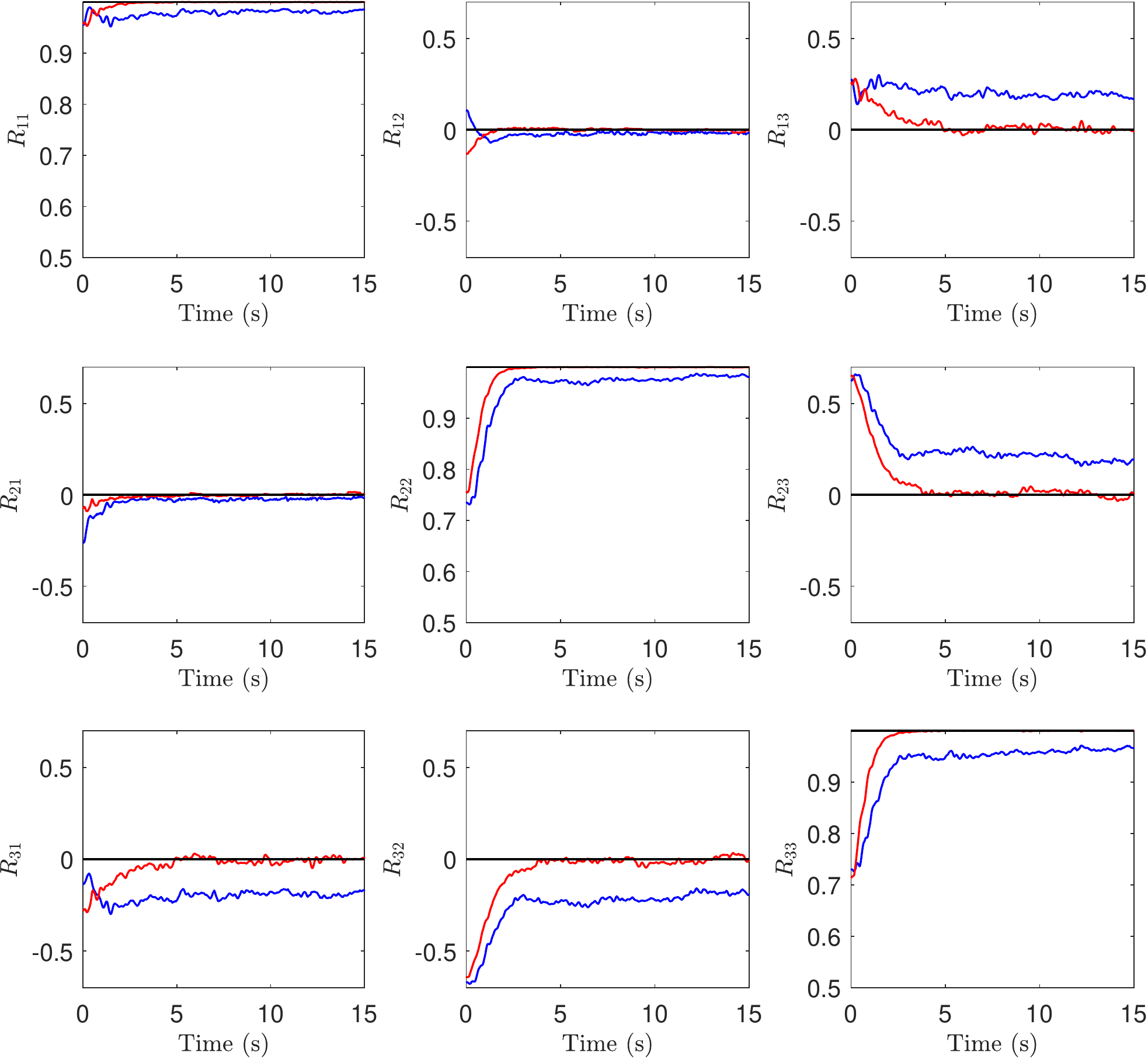} 
    \caption{Attitude hovering (rotation matrix), black: desired, blue: without disturbance rejection~\cite{LeeLeoPICDC10}, red: adaptive controller}
	\label{fig:NNvsPDvsPID_states_hover} 
\end{figure}

\begin{figure}
	\centerline{
		\subfigure[Attitude error]{\includegraphics[width=0.5\columnwidth]{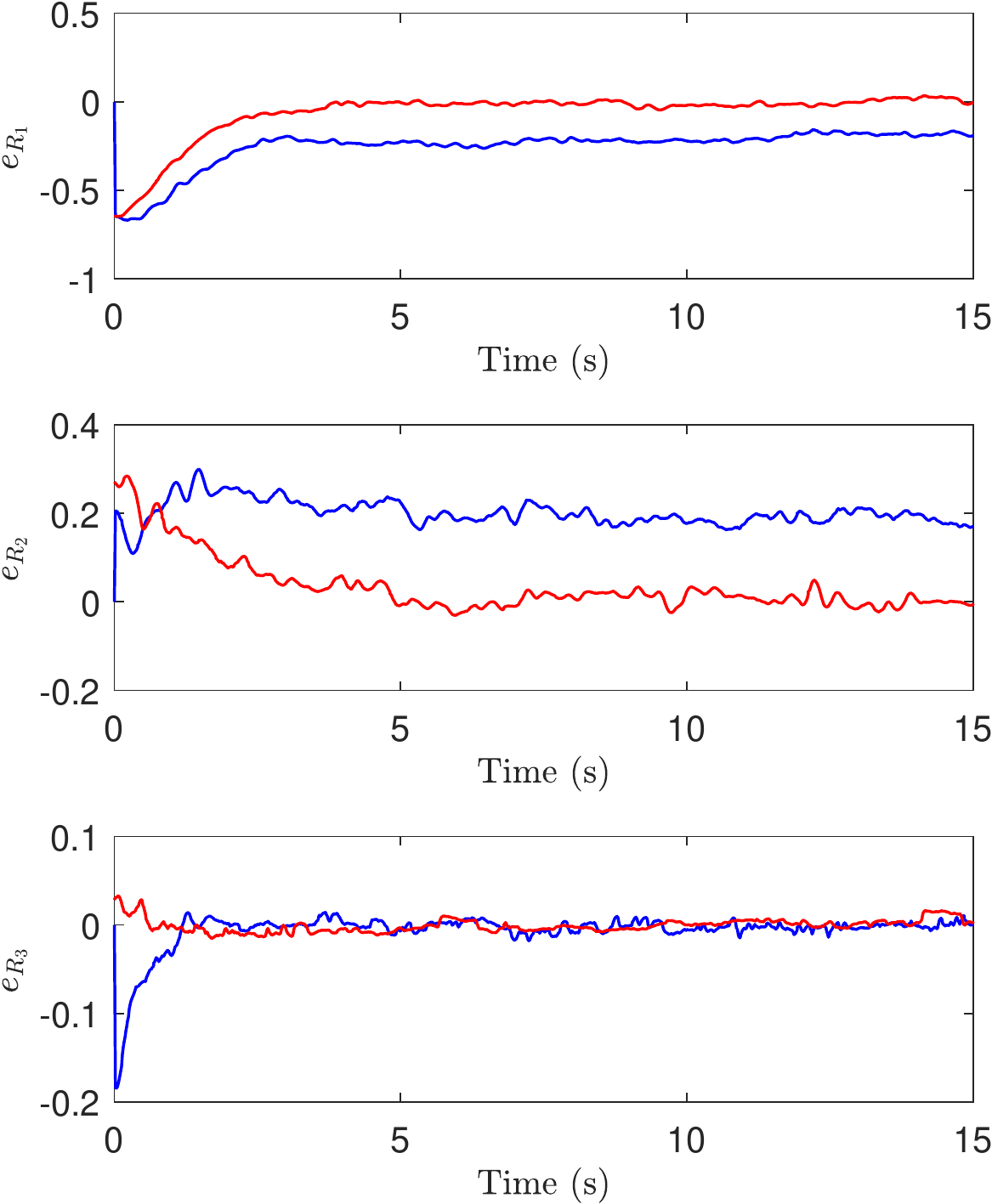}}
		\hfill
		\subfigure[Angular velocity error ( $\si{\radian \per \second}$)]{\includegraphics[width=0.5\columnwidth]{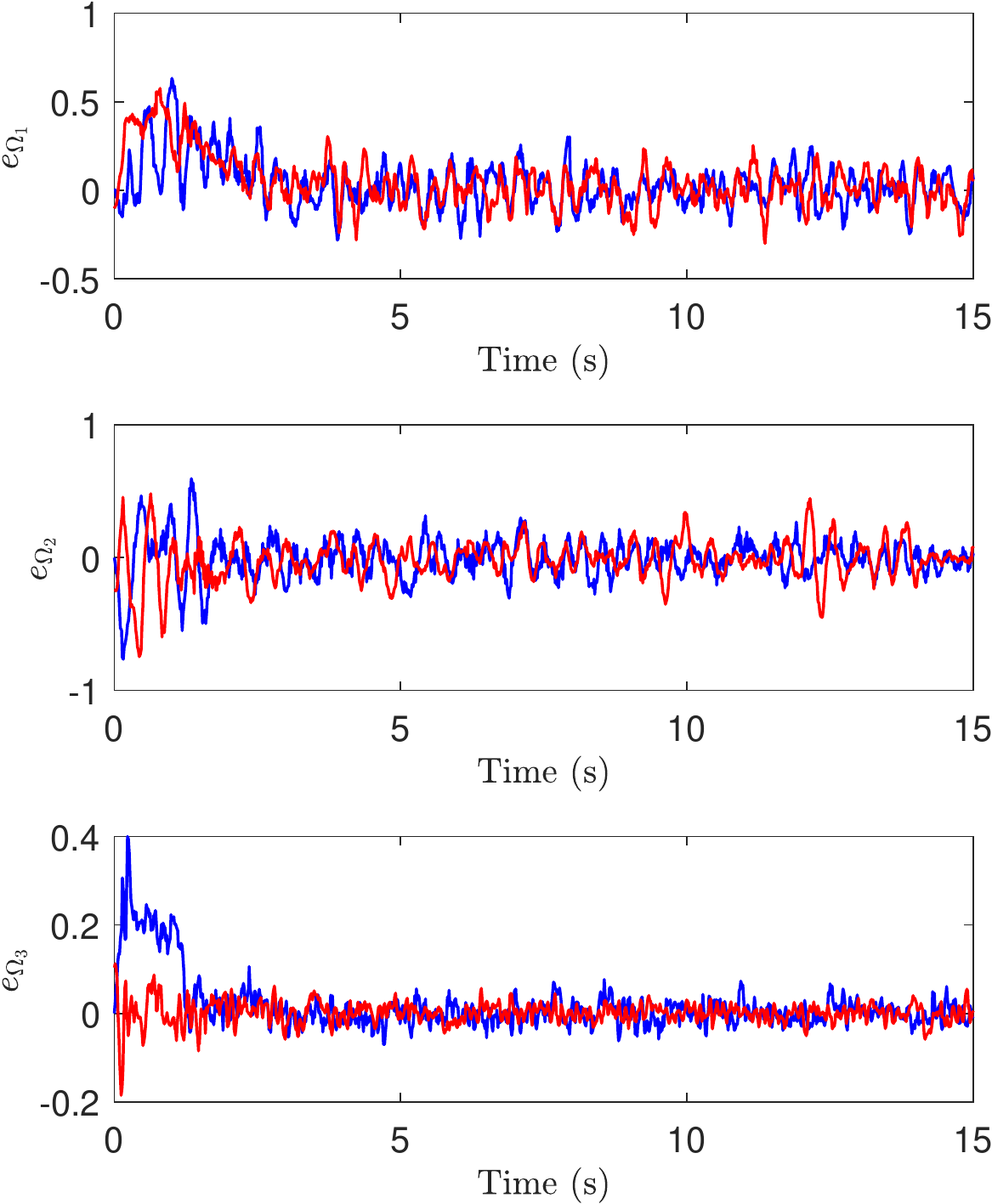}}
	}
	\centerline{
		\subfigure[Angular velocity ($\si{\radian \per \second}$)]{\includegraphics[width=0.5\columnwidth]{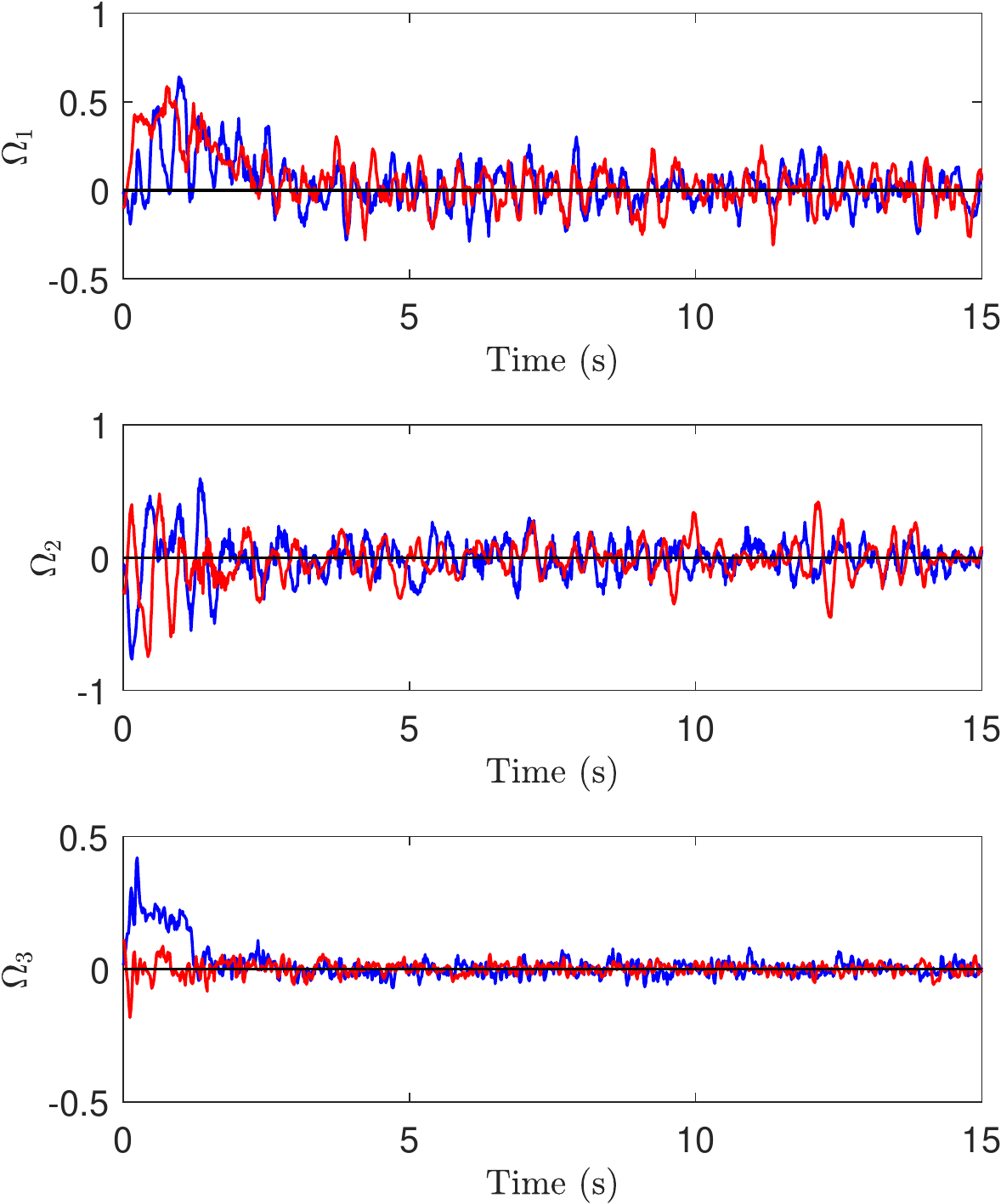}}
		\hfill
		\subfigure[Thrust ($\si{\newton}$)]{\includegraphics[width=0.5\columnwidth]{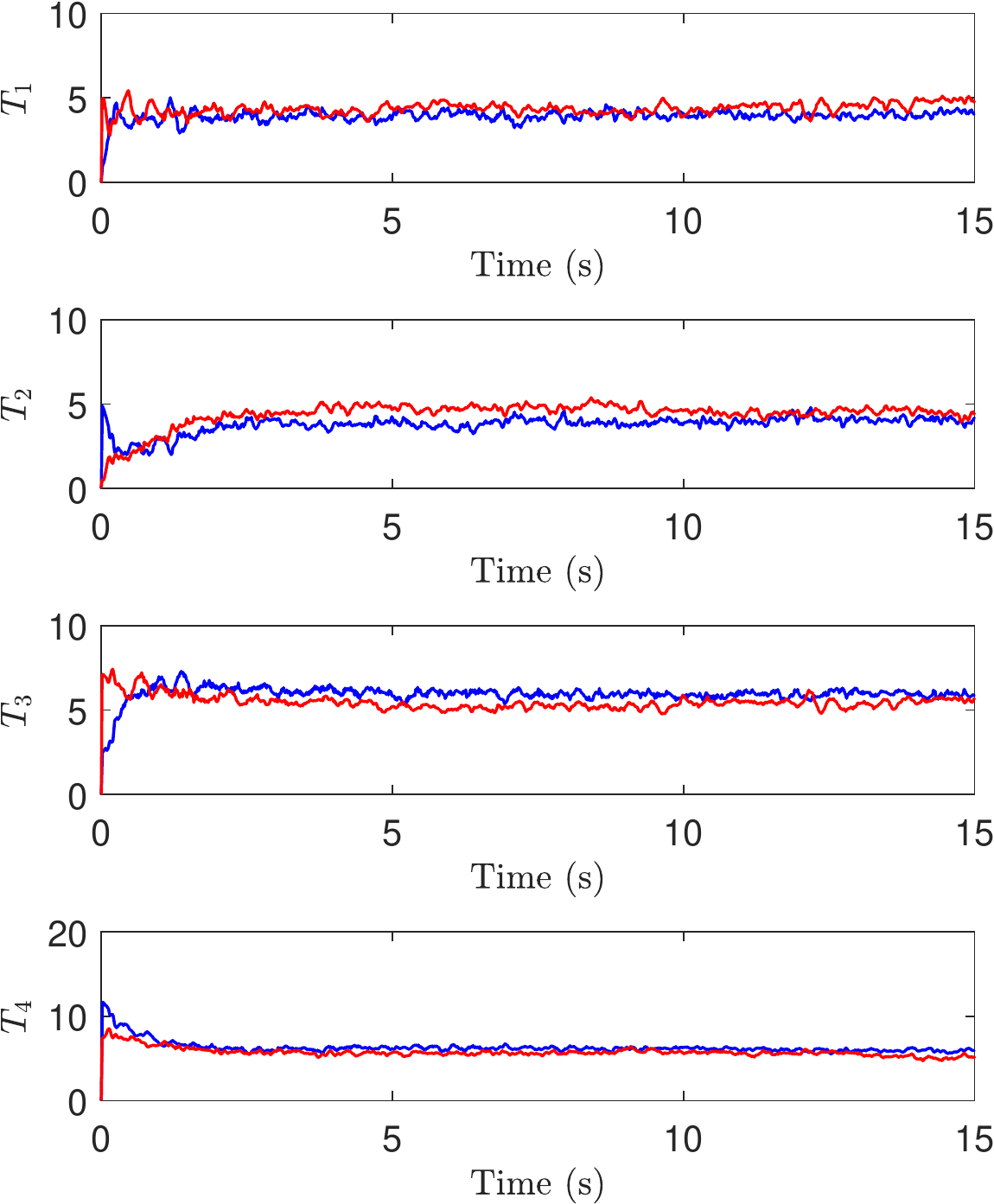}}
	}
    \caption{Attitude hovering (attitude and angular velocity errors, angular velocity and four rotor thrusts), black: desired, blue: without disturbance rejection~\cite{LeeLeoPICDC10}, red:  adaptive controller}
	\label{fig:NNvsPDvsPID_errror_hover} 
\end{figure}

\begin{figure}
	\centerline{
		\subfigure[Hovering without wind]{\includegraphics[width=1\columnwidth]{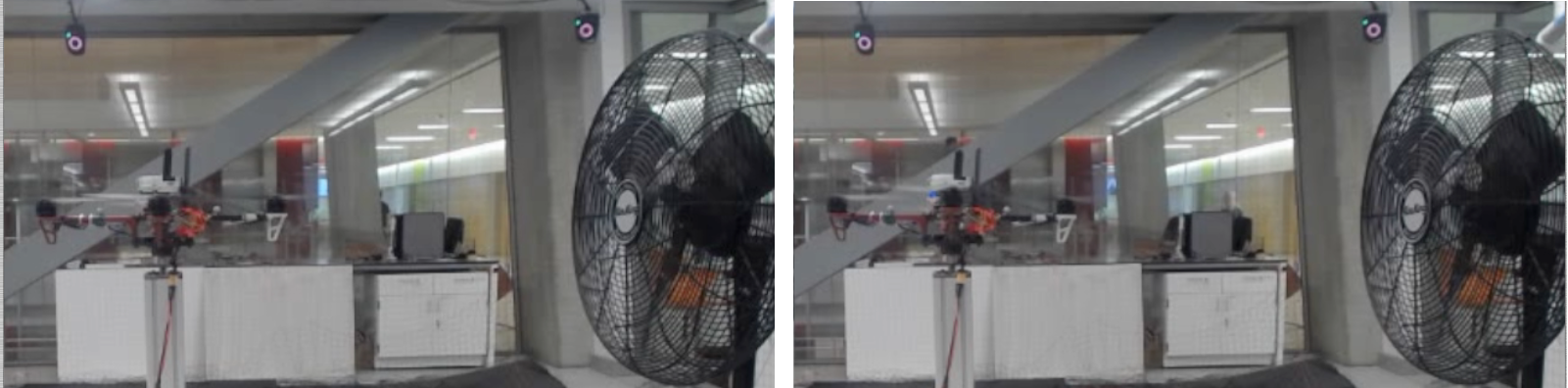}}
	}
	\centerline{
		\subfigure[Hovering without wind]{\includegraphics[width=1\columnwidth]{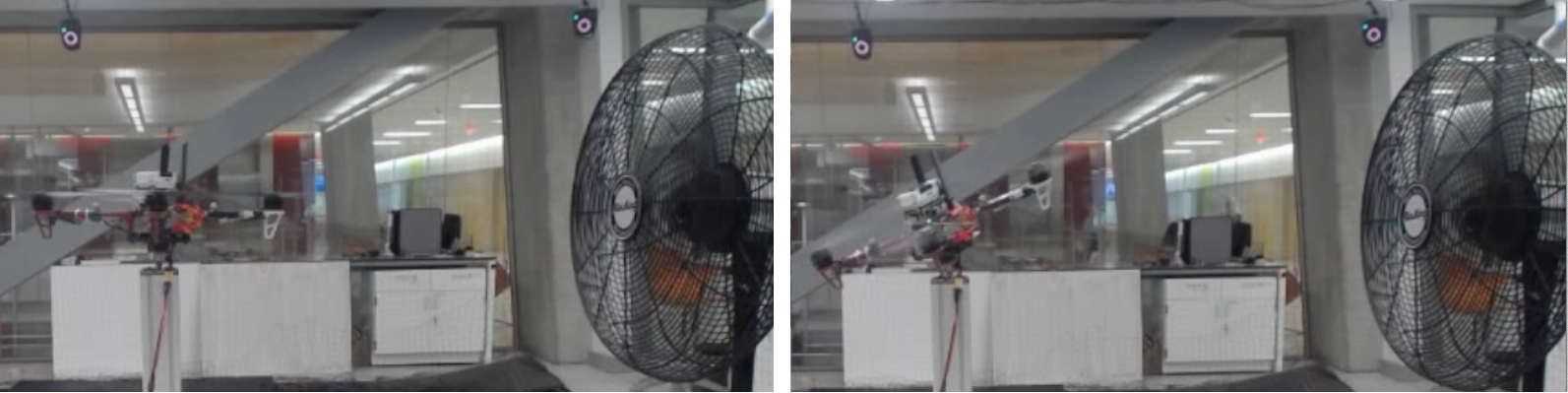}}
	}
    \caption{Attitude hovering (snapshots), left:adaptive controller, right:without disturbance rejection~\cite{LeeLeoPICDC10}}
	\label{fig:experimentphoto2} 
\end{figure}

\subsection{Geometric Adaptive Control for Attitude Tracking}\label{Sec:trac_attitude}

Next, we consider attitude tracking control. 
The desired attitude is parameterized as
\begin{gather}
R_d=\begin{bmatrix}   
c\theta c\phi& s \psi s \theta c \phi-\cos\psi s \phi& c \psi s \theta c \phi+s \psi s \phi\\
c \theta s \phi& s \psi s \theta s \phi+c \psi c \phi& c \psi s \theta s \phi-s \psi c \phi\\
-s \theta  & s \psi c \theta         & c \psi c \theta
\end{bmatrix},\label{eqn:Rd_at_exper}
\end{gather}
where 
$\cos$, and $\sin$ are shown by $c$ and $s$ respectively.
The Euler angles $\psi, \theta, \phi$ are chosen as
\begin{gather}
\psi(t)=\pi A_s \cos(2\pi B_s t),\label{eqn:psi_traj}\\
\theta(t)=\pi A_t \cos(2\pi B_t t),\label{eqn:theta_traj}\\
\phi(t) = \pi A_f \sin(2\pi B_f t),\label{eqn:phi_traj}
\end{gather}
and the trajectory parameters are set to
\begin{gather}
A_s= 0.15,\quad   A_t= 0.12, \quad   A_f= 0.11,\nonumber\\
B_s= 0.5, \quad   B_t= 0.5, \quad    B_f= 0.5.\label{eqn:att_traj_par}
\end{gather}
The desired trajectory is chosen such that the vehicle rotates along the three axes of $b_1, b_2$, and $b_3$ simultaneously, while wind is blowing toward the direction of $-e_2$ in the inertial frame.

The corresponding response of the three different controllers are presented in the following Figures~\ref{fig:NNvsPDvsPID_states}--\ref{fig:NNvsPDvsPID_errror}.
The blue line is for the geometric controller without disturbance rejection~\cite{LeeLeoPICDC10}, the green line is for the geometric controller with an integral term presented in~\cite{Goodarzi_Lee_13}, the red line is for the proposed method.

It can be seen that the geometric controller without disturbance rejection results in large trajectory errors.
The controller presents in~\cite{Goodarzi_Lee_13} improves the results.
However, the proposed geometric controller results in the best performance of trajectory tracking.

\begin{figure}
	\centering 
	\includegraphics[width=1\columnwidth]{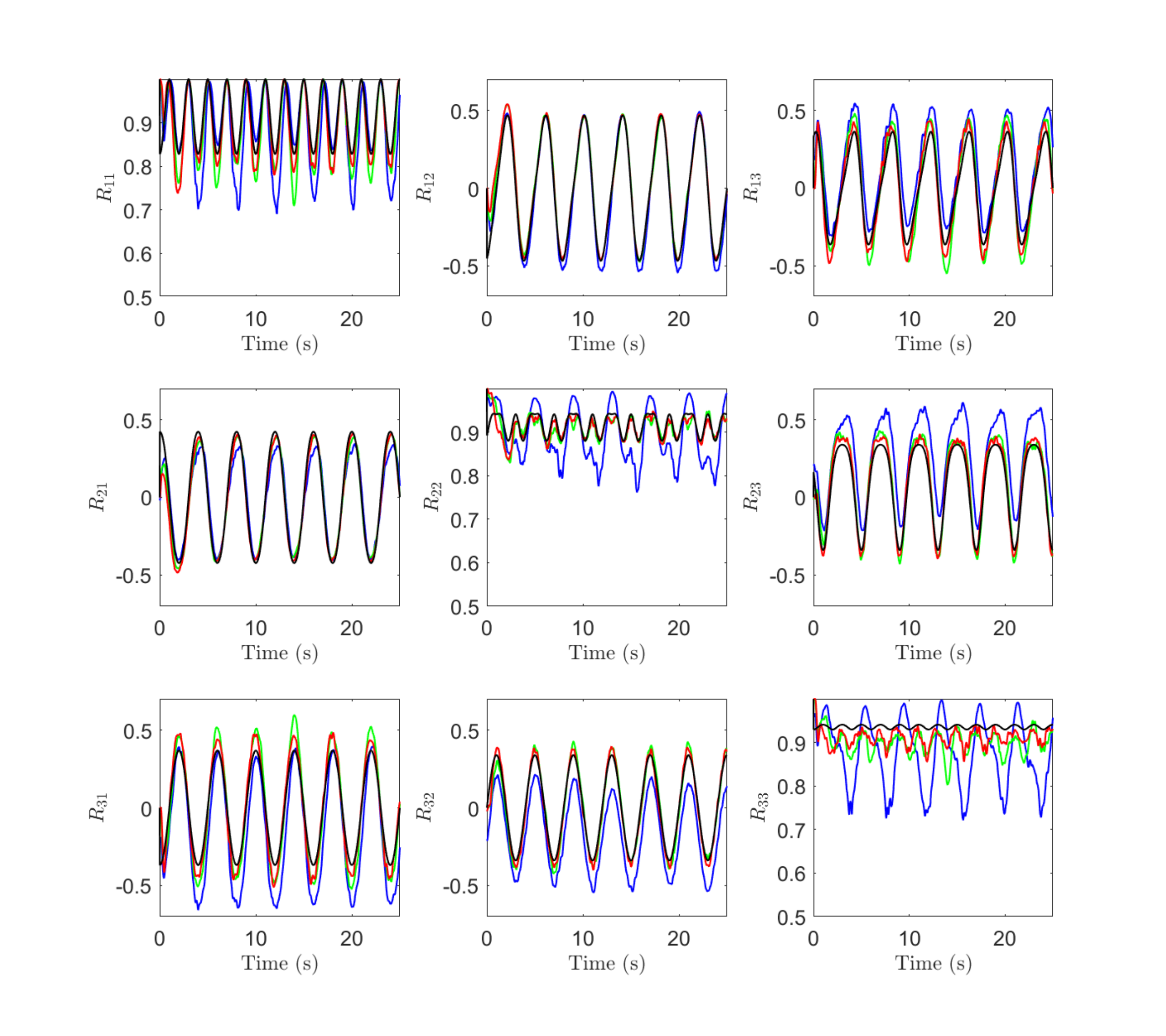} 
    \caption{Attitude tracking (rotation matrix), black: desired, blue: without disturbance rejection~\cite{LeeLeoPICDC10}, green: PID~\cite{Goodarzi_Lee_13}, red:  adaptive controller} \label{fig:NNvsPDvsPID_states} 
\end{figure}

\begin{figure}
	\centerline{
		\subfigure[Attitude error]{\includegraphics[width=0.5\columnwidth]{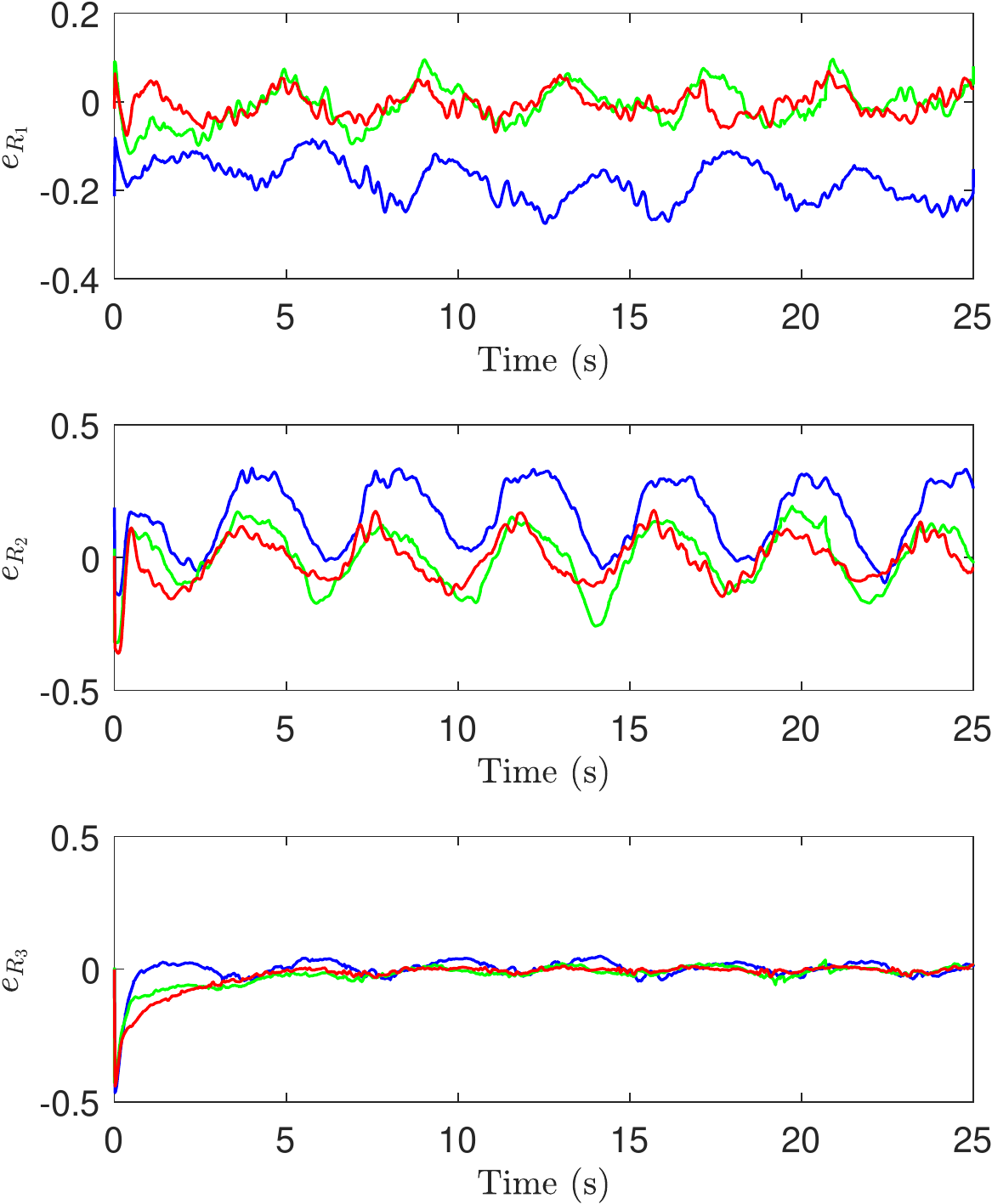}}
		\hfill
		\subfigure[Angular velocity error ($ \si{\radian \per \second}$)]{\includegraphics[width=0.5\columnwidth]{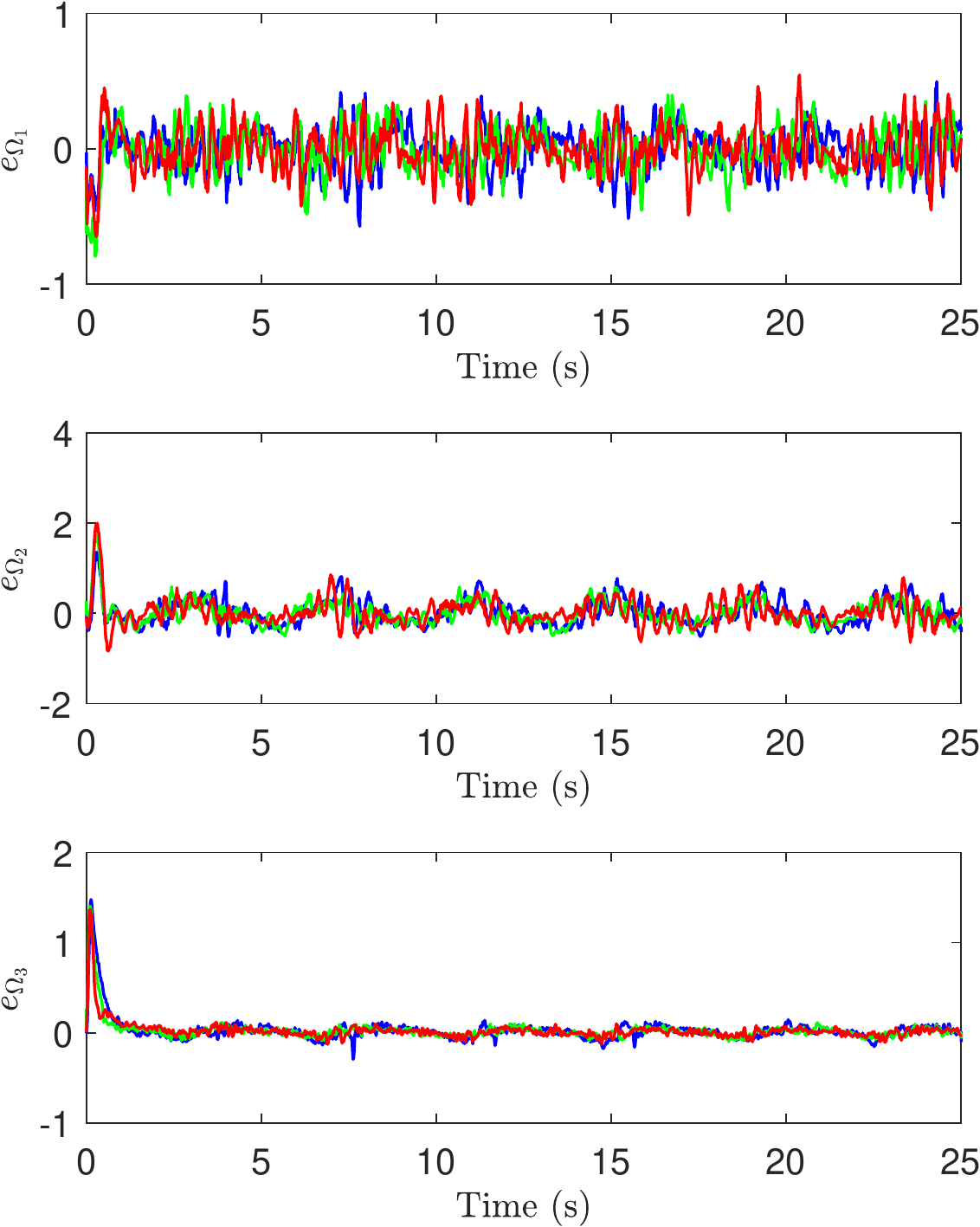}}
	}
	\centerline{
		\subfigure[Angular velocity ($ \si{\radian \per \second}$)]{\includegraphics[width=0.5\columnwidth]{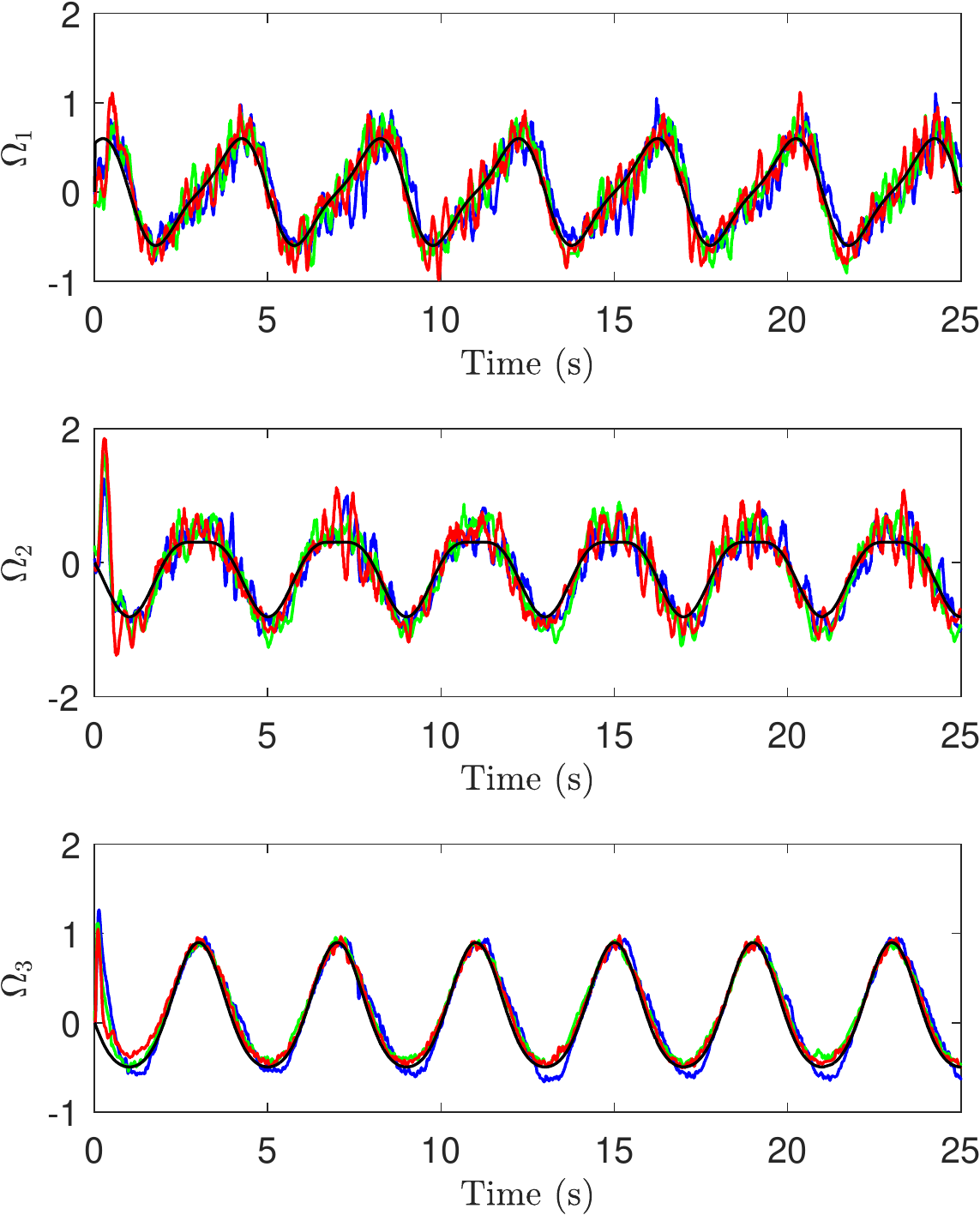}}
		\hfill
		\subfigure[Thrust ($\si{\newton}$)]{\includegraphics[width=0.5\columnwidth]{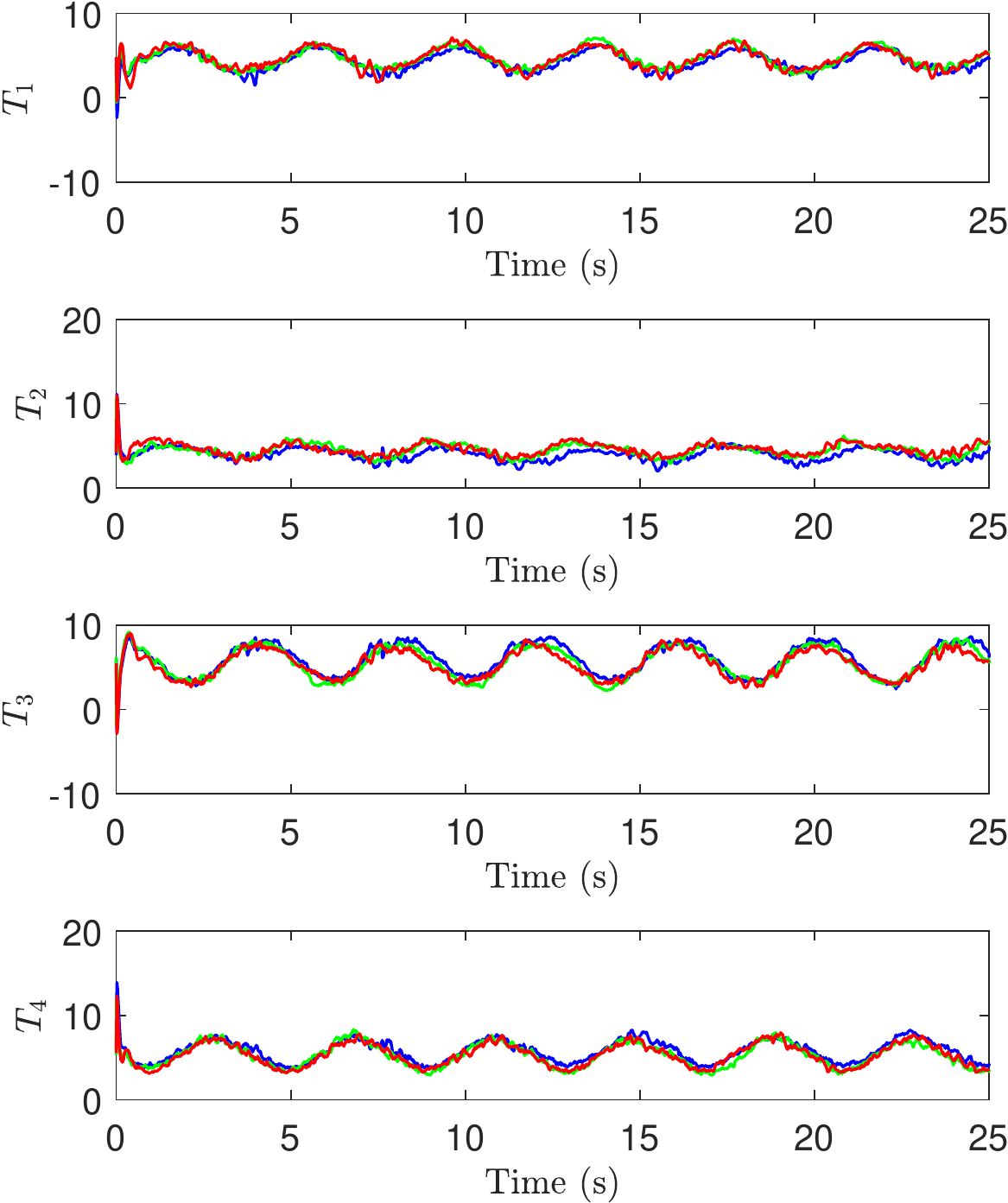}}
	}
    \caption{Attitude tracking (attitude and angular velocity errors, angular velocity and four rotor thrusts), black: desired, blue: without disturbance rejection~\cite{LeeLeoPICDC10}, green: PID~\cite{Goodarzi_Lee_13}, red:  geometric adaptive}
	\label{fig:NNvsPDvsPID_errror}  
\end{figure}

Figure~\ref{fig:experimentphoto} shows the experimental setup in the $e_2-e_3$ plane, while wind is blowing toward $-e_2$, and $e_3$ points downward. 
The photo is taken at the time of $0.5\si{second}$,
when the desired pitch angle is $\phi_d=\ang{19.8}$.
On the left, tracking with the proposed adaptive controller is shown, and on the right the geometric controller without wind disturbance rejection is presented.
It can be seen that there is an large deviation of the desired pitch angle (about $-\ang{19.8}$) in the presence of wind in the absence of disturbance rejection techniques\footnote{For the video file of this experiment, visit the FDCL YouTube channel at \url{https://youtu.be/zUsOif1SfEs} or the experiment section of the FDCL website at \url{http://fdcl.seas.gwu.edu/}.}.

\begin{figure}
	\centering 
	\includegraphics[width=1\columnwidth]{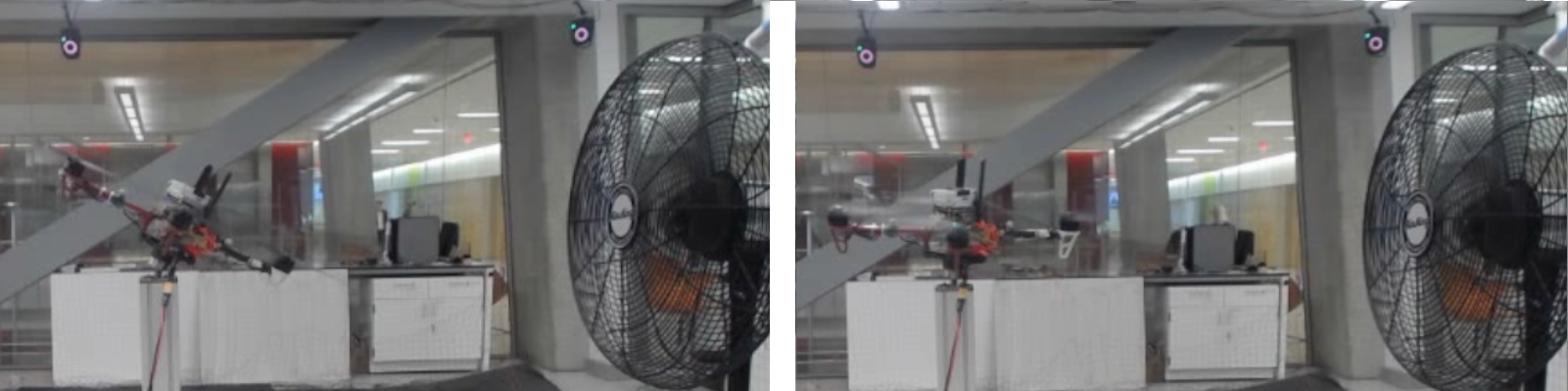} 
    \caption{Attitude tracking (snapshot at $t=0.5 \si{second}$, with the desired pitch angle of $\phi_d=\ang{19.8}$), left: adaptive controller, right: without disturbance rejection~\cite{LeeLeoPICDC10}}
	\label{fig:experimentphoto} 
\end{figure}

\section{Position Trajectory Tracking Control}

In this section, the quadrrotor UAV is detached from the spherical joint used Section \ref{sec:att_experiment}, and it is controlled with the position controller provided in Proposition~\ref{prop:NN_prop}. 
The quadrotor properties are given by
\begin{gather*}
J= \begin{bmatrix} 0.02&0&0\\
0&0.027&0\\0&0&0.04\end{bmatrix}\si{\kilogram\meter\squared},
m=\SI{2.1}{\kilogram}, \\ d_h=0.09\si{\meter},\quad T_{max}=12\si{\newton},\quad C_{TQ}=0.0135\si{\meter}
\end{gather*}
Wind data in front of the fan is measured with TriSonica-Mini 3-dimensional sonic anemometer, and provided in \reffig{wind_map_flight}.

We consider three cases: a hovering flight, a position tracking, and a backflip maneuver. 

\begin{figure}
	\centerline{
		\subfigure[Wind ($\si{\meter \per \second}$) versus time ($\si{\second}$) for different positions in front of the fan]{\includegraphics[width=0.5\columnwidth]{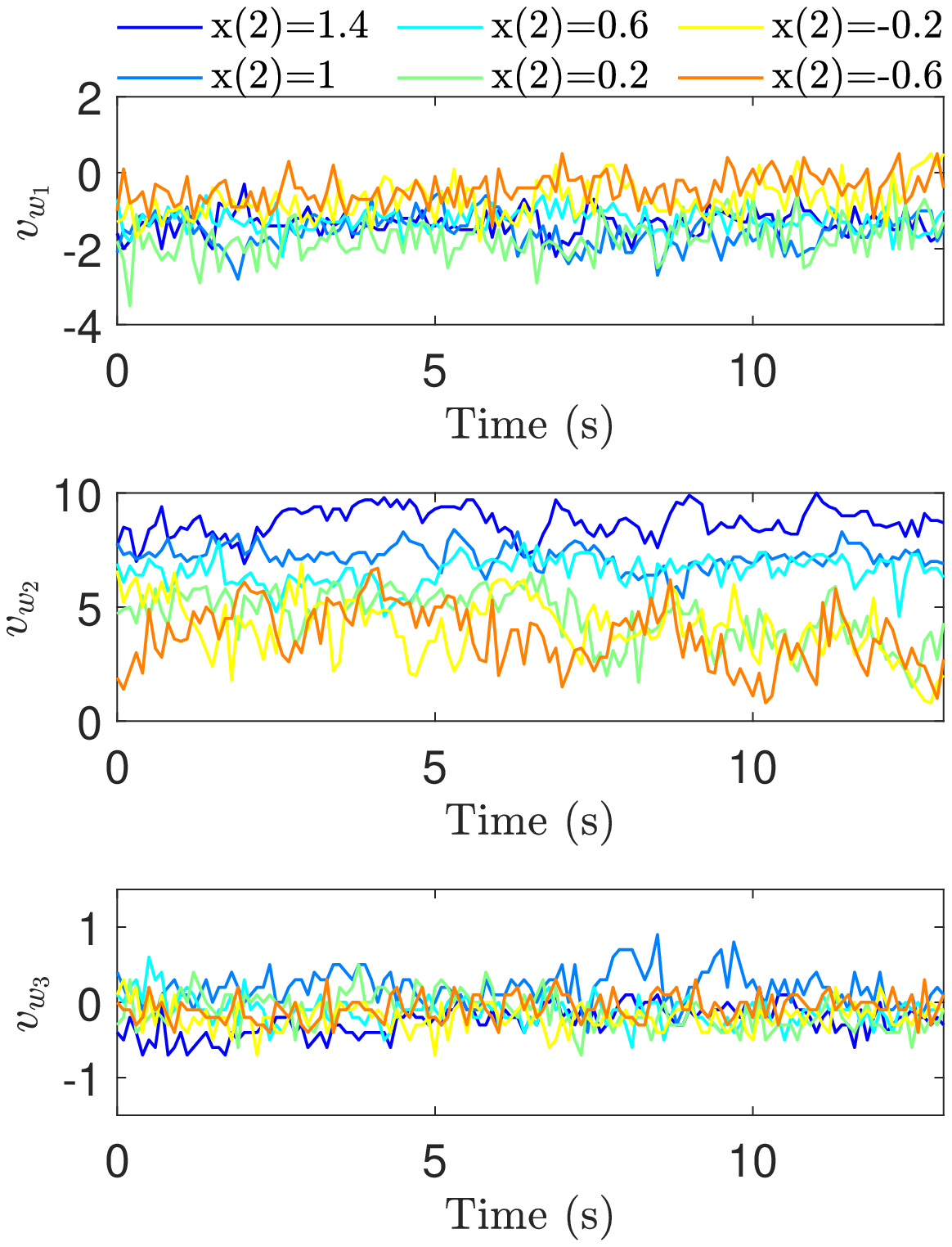}}
		\hfill
		\subfigure[Distribution of wind velocity element $v_{w_2}$ ($\si{\meter \per \second}$ versus distances in front of the fan)]{\includegraphics[width=0.5\columnwidth]{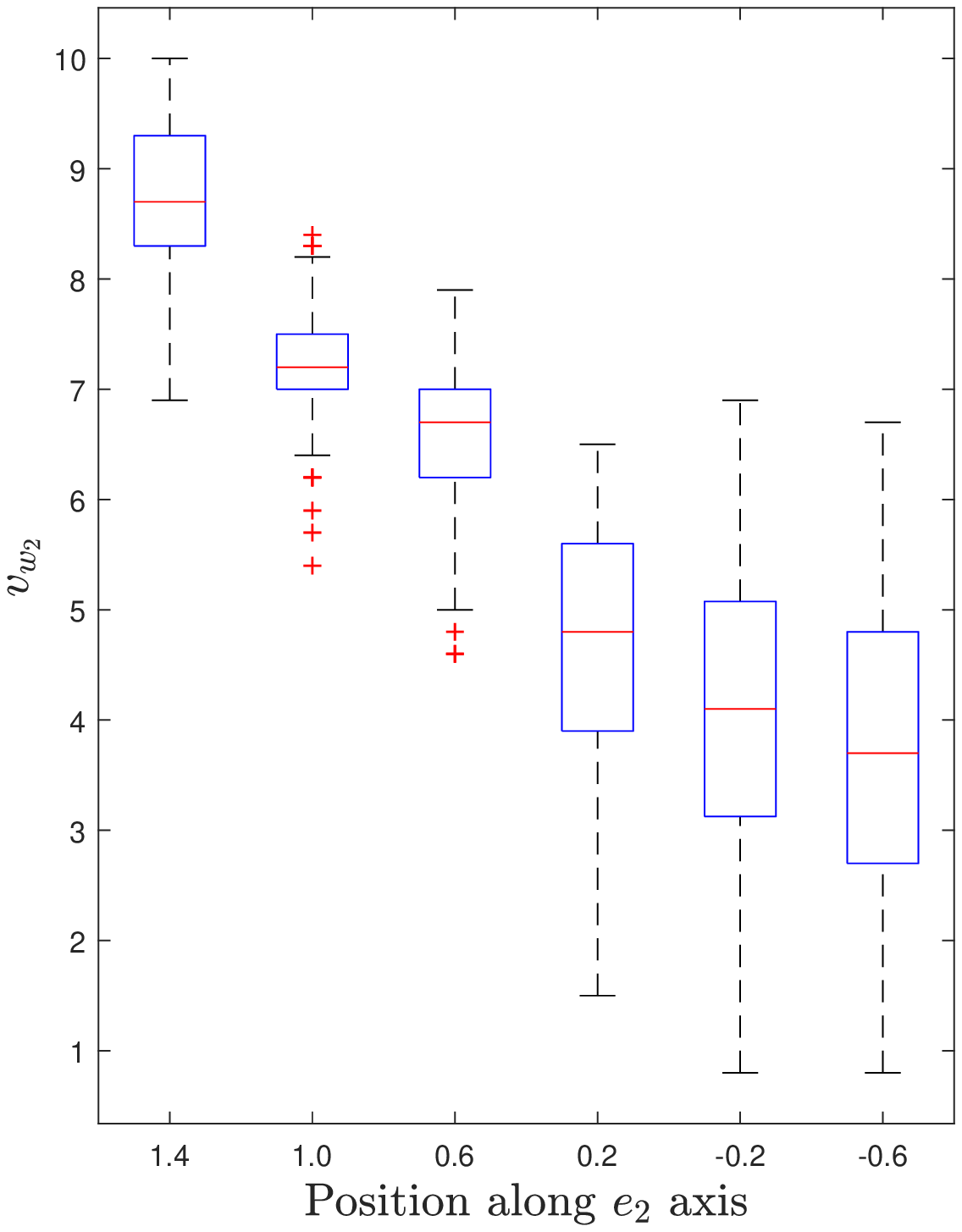}}
	}
	\caption{Distribution of wind velocity element $v_{w_2}$ ($\si{\meter \per \second}$ versus distances in front of the fan)}
	\label{fig:wind_map_flight}  
\end{figure}

\subsection{Geometric Adaptive Control for Hovering}\label{sec:hover_flight}

In this section, we observe the performance of the adaptive controller for hovering flight when the quadrotor is subject to the wind.

Initially the fan is turned off, and it is turned on at about $t=10$ seconds.
The location of the quadrotor along the second inertial frame is $x(2)=1.0 \si{m}$, and as such the average wind speed is about $7.3 \si{m/s}$ as shown at \reffig{wind_map_flight}.

The controller gains and parameters are chosen as
\begin{gather*}
k_x=16.0,\quad k_v= 5.0,\\
k_R=1.2,\quad k_\Omega= 0.3,\\
\gamma_{w_1}= 0.3  ,\quad \gamma_{v_1}=0.3  ,\quad \kappa_1=0.0001, \quad c_1=1,\\ 
\gamma_{w_2}=0.035  ,\quad \gamma_{v_2}=0.035  ,\quad \kappa_2=0.0001, \quad c_2=1.
\end{gather*}
The number of neurons in the first, hidden and output layers are 
\begin{gather}
N_{1_1}=6,\quad N_{2_1}=3,\quad N_{3_1}=3,\\
N_{1_2}=6,\quad N_{2_2}=3,\quad N_{3_2}=3.
\end{gather}

Experimental results are illustrated in~\reffig{hoverflightNNvsPDvsPID_eXeV} and \reffig{hoverflightDeltaNNvsPDvsPID}. 
The trajectories without disturbance rejection are plotted in blue, and with PID controller~\cite{Goodarzi_Lee_13} in green, and with adaptive controller in red.
It can be seen that both controllers with disturbance rejection techniques improve the tracking performance. 
However the adaptive controller outperforms the other, while it does not result in large thrusts.

\reffig{photo_hover_flight} shows the experiment photo. 
The top photo is for hovering flight with the adaptive controller, and in the bottom, the quadrotor supposed to fly closer to the fan, but due to the wind it is far form the desired position\footnote{For the video file this experiment, visit the FDCL YouTube channel at \url{https://youtu.be/ouSsrDfi8DM} or the experiment section of the FDCL website at \url{http://fdcl.seas.gwu.edu/}.}. 
\begin{figure}
	\centerline{\subfigure[Position error ($m$)]{\includegraphics[width=0.5\columnwidth]{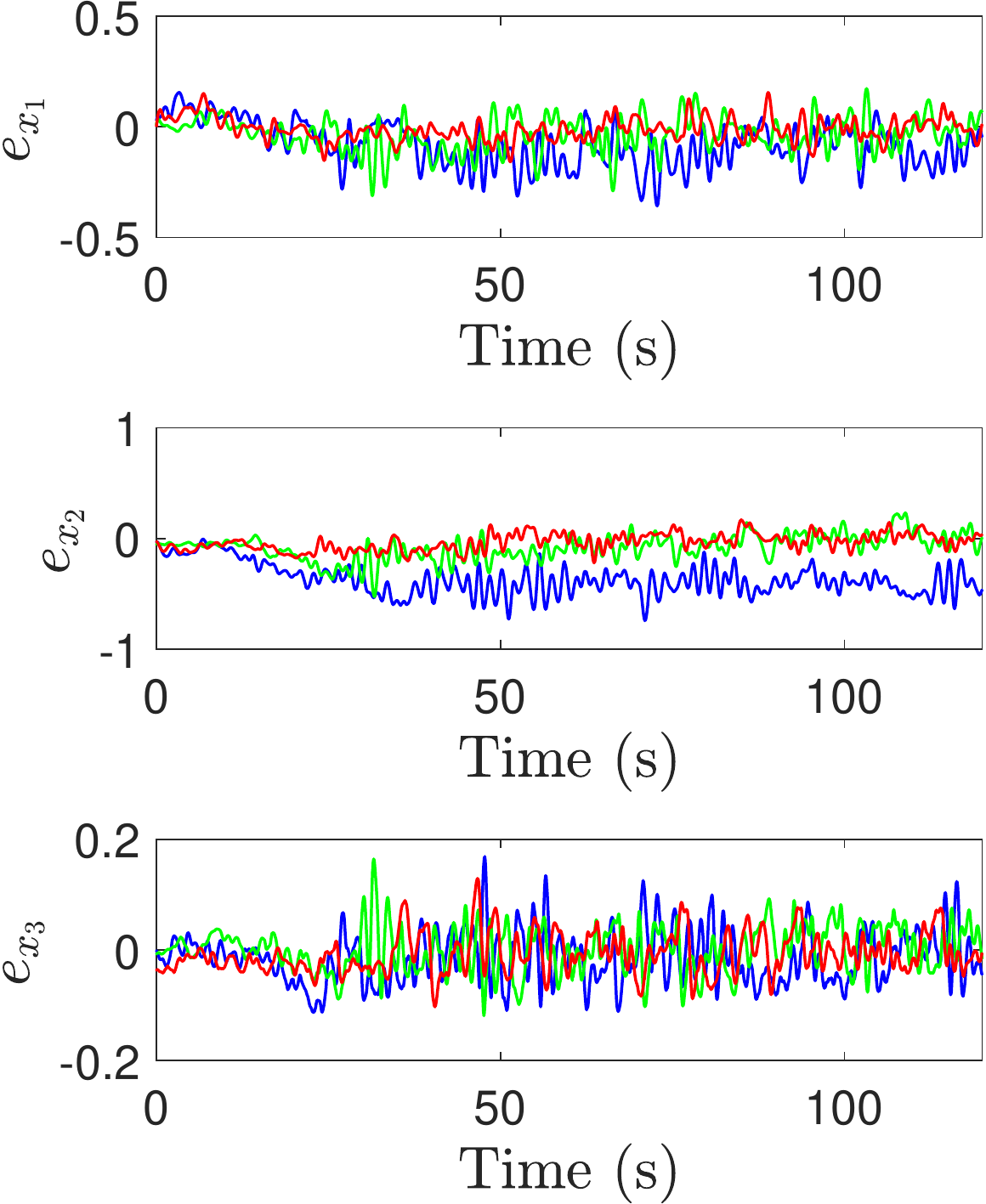}}
		\hfill
		\subfigure[Velocity error ($\si{\meter \per \second}$)]{\includegraphics[width=0.5\columnwidth]{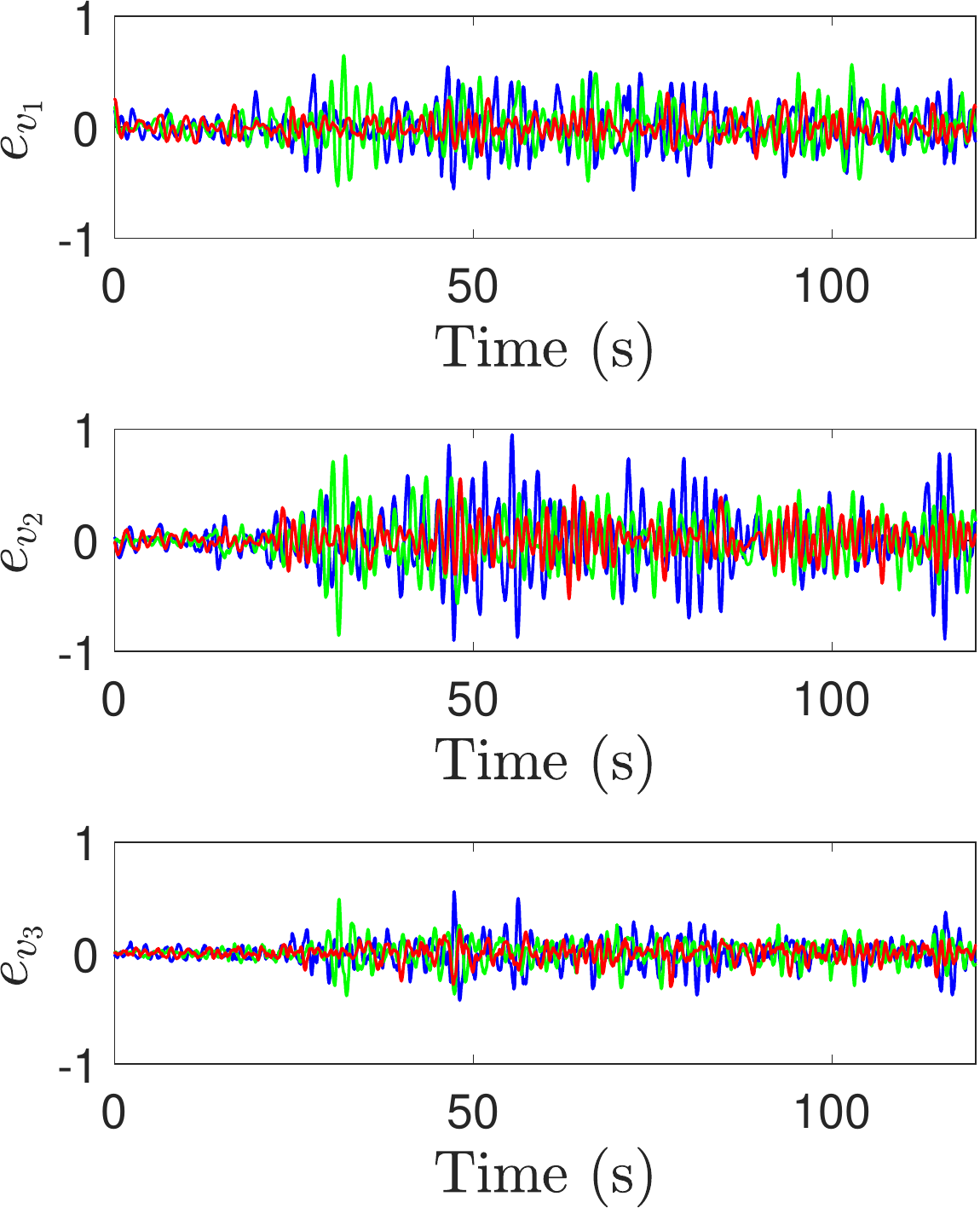}}}
	\centerline{\subfigure[Attitude error]{\includegraphics[width=0.5\columnwidth]{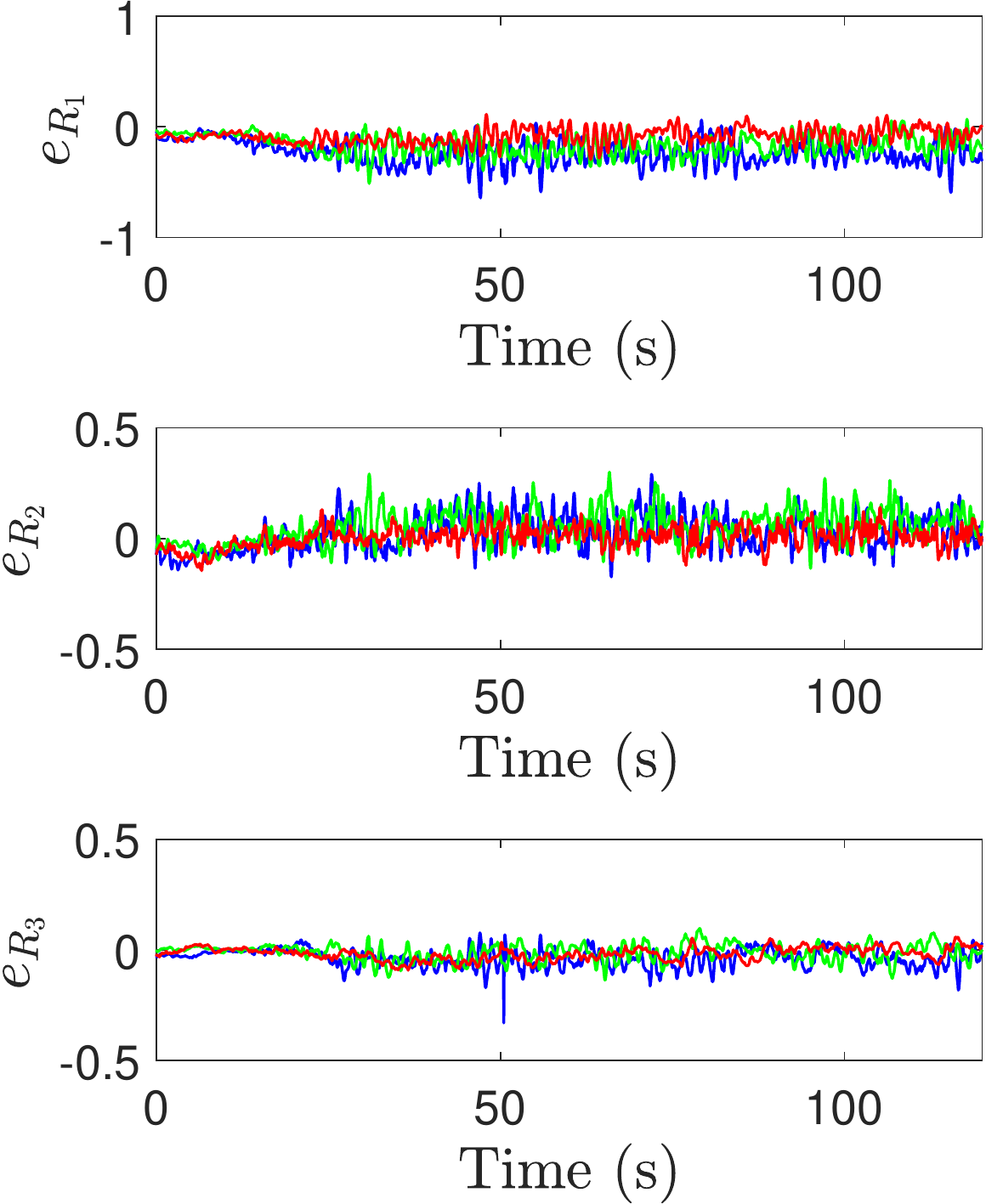}}
		\hfill
		\subfigure[Angular velocity error ($\frac{rad}{s})$)]{\includegraphics[width=0.5\columnwidth]{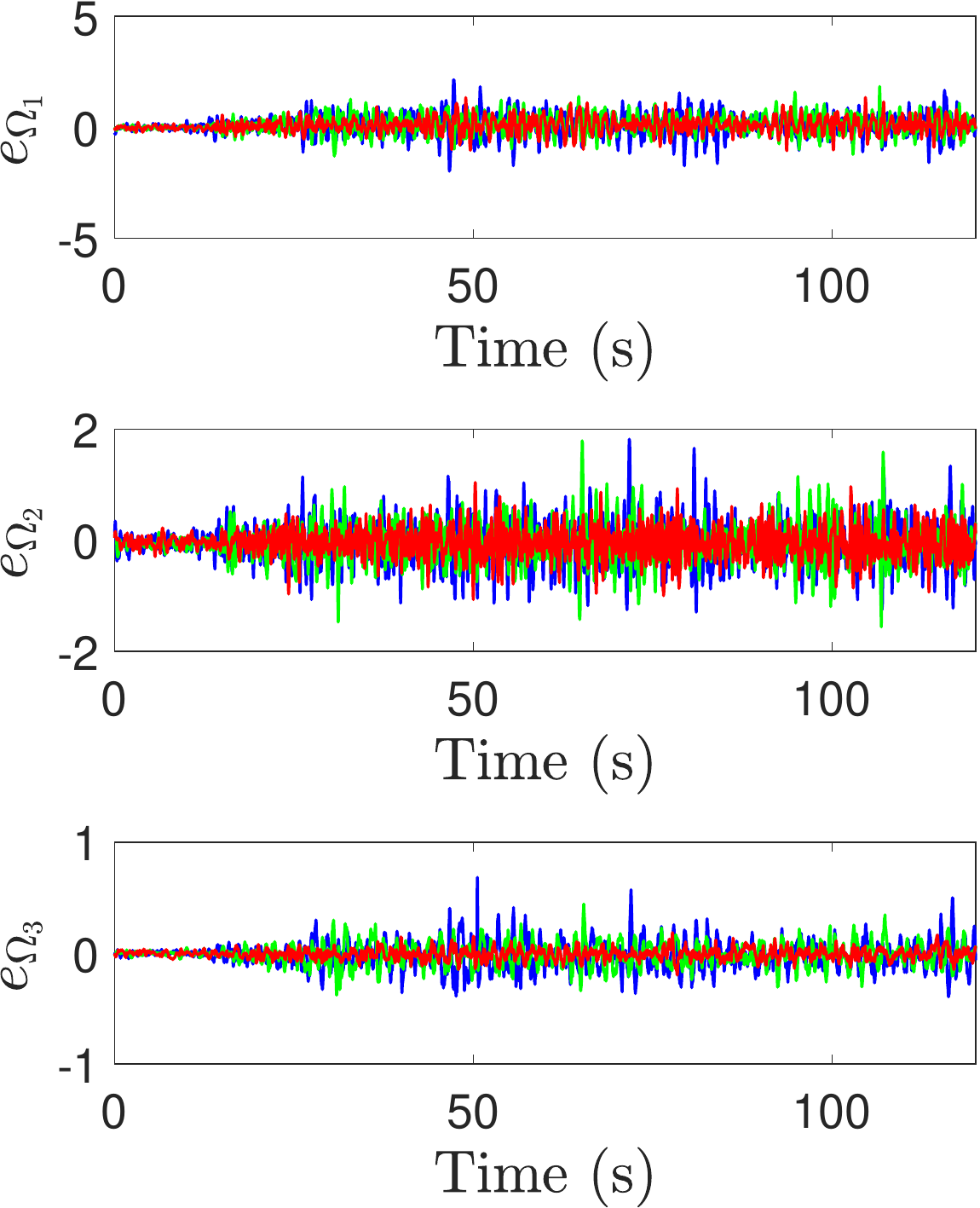}}}
        \caption{Hovering flight (tracking errors), blue:without disturbance rejection~\cite{LeeLeoPICDC10}, green:PID~\cite{Goodarzi_Lee_13}, red: adaptive controller}
	\label{fig:hoverflightNNvsPDvsPID_eXeV}  
\end{figure}

\begin{figure}
	\centerline{
		\subfigure[$\Delta_1$, green:PID, red: adaptive with wind]{\includegraphics[width=0.5\columnwidth]{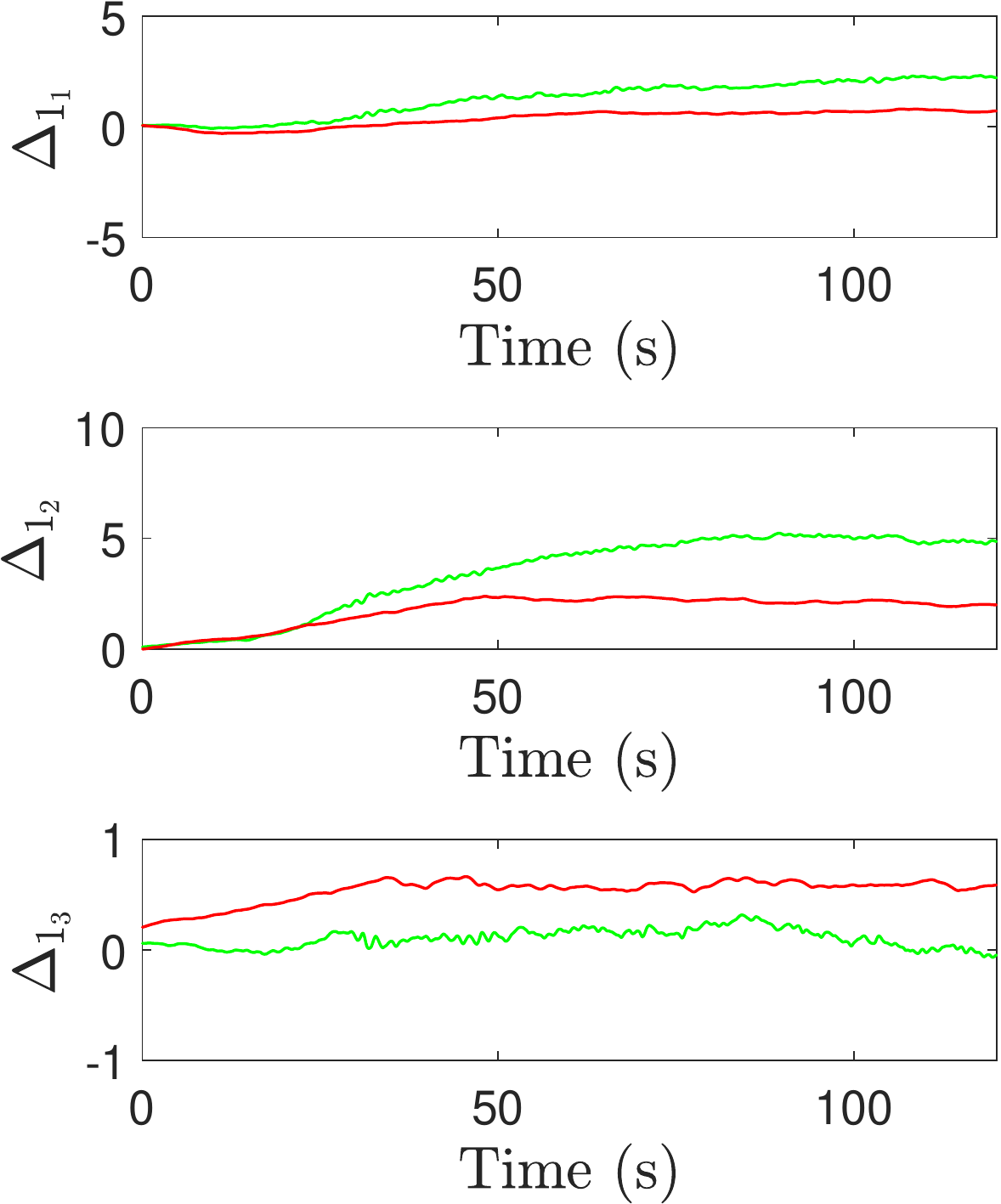}}
		\hfill
		\subfigure[$\Delta_2$, green:PID, red: adaptive with wind]{\includegraphics[width=0.5\columnwidth]{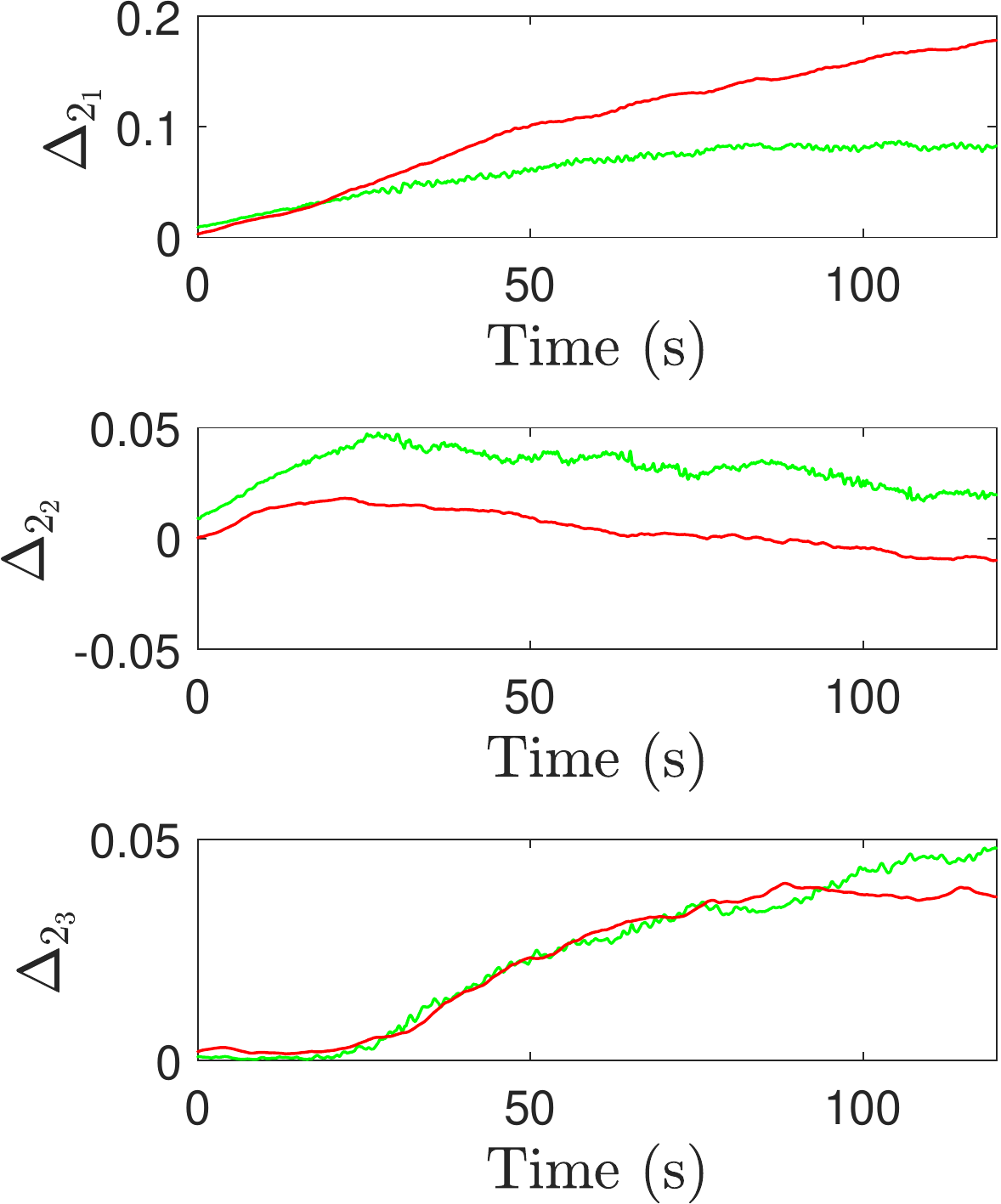}}
	}
	\centerline{
		\subfigure[Thrust (N), blue: without disturbance rejection~\cite{LeeLeoPICDC10} with wind, green:PID, red: adaptive controller with wind]{\includegraphics[width=0.5\columnwidth]{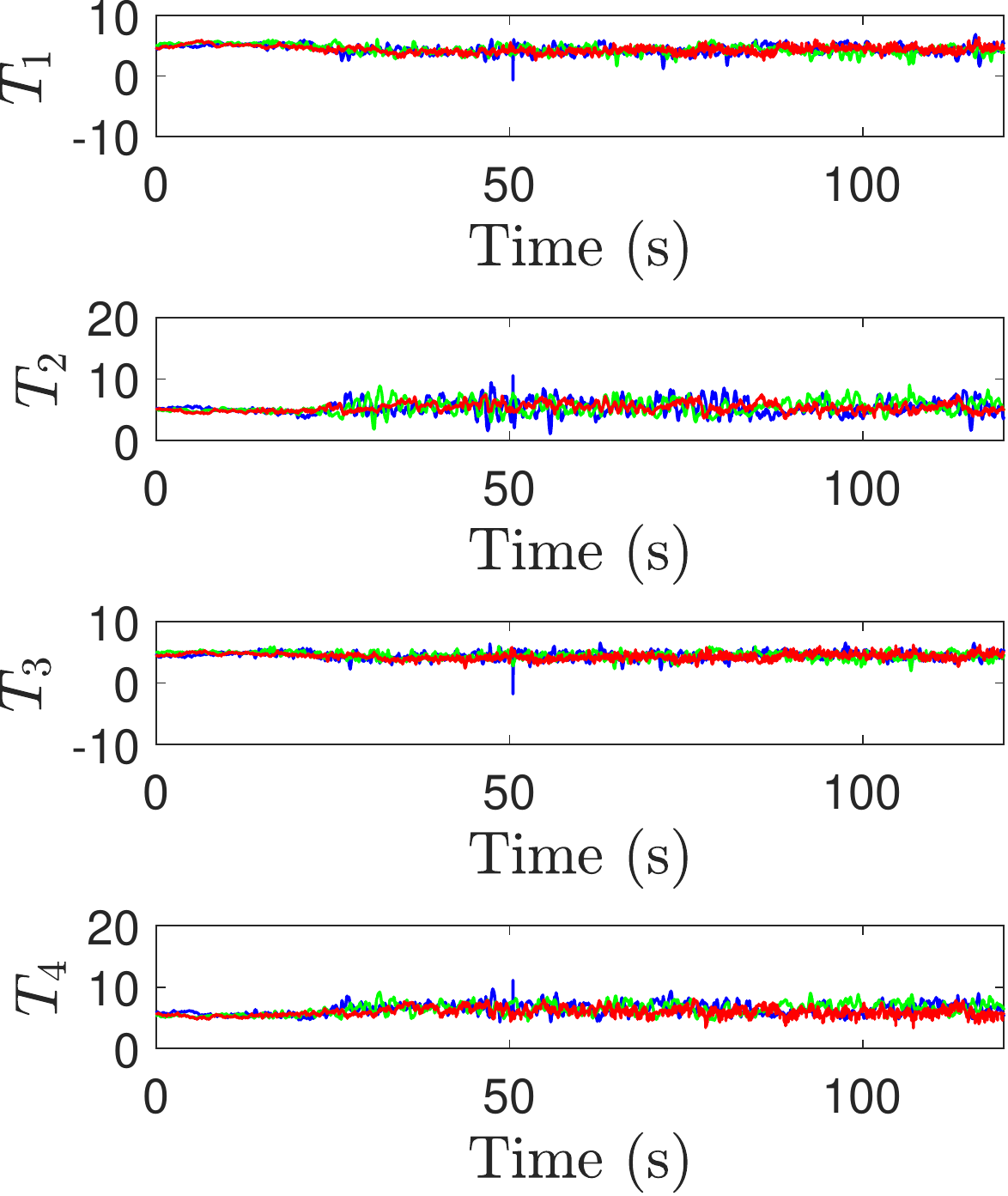}}
	}
    \caption{Hovering flight (adaptive term, and thrusts), blue:without disturbance rejection~\cite{LeeLeoPICDC10}, green:PID~\cite{Goodarzi_Lee_13}, red: adaptive controller}
		\label{fig:hoverflightDeltaNNvsPDvsPID}
\end{figure}

\begin{figure}
	\centering 
	\includegraphics[width=1\columnwidth]{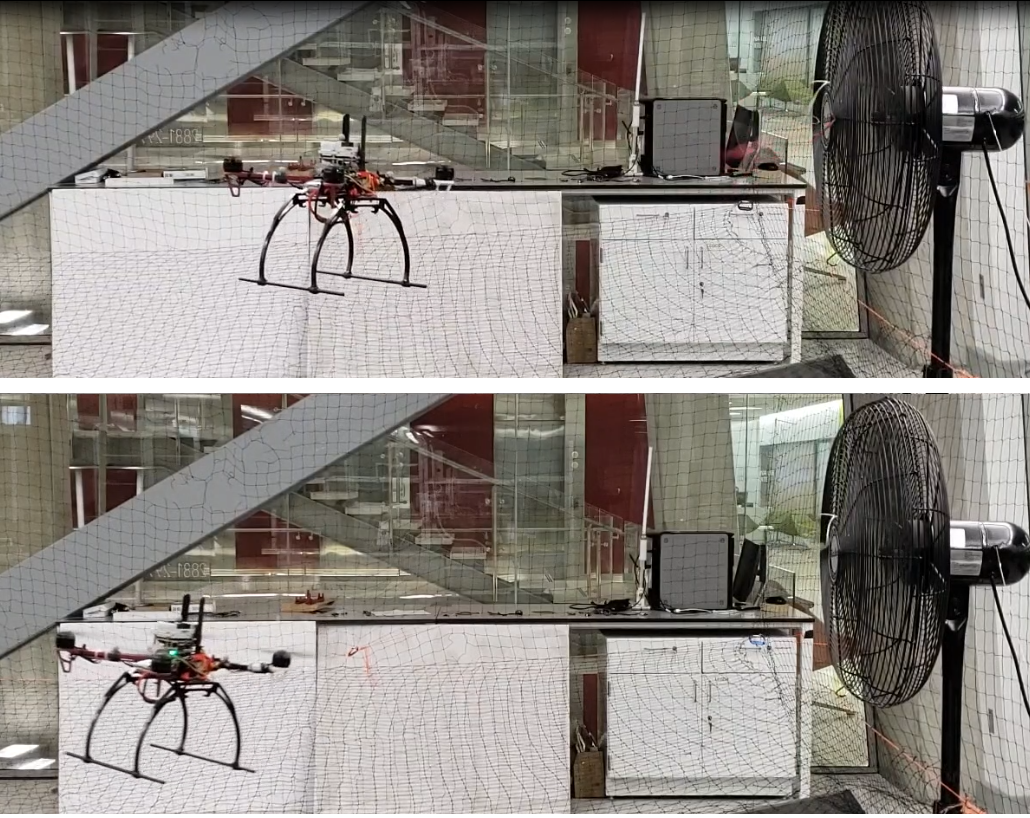} 
    \caption{Hovering flight (snapshot), top:adaptive controller, bottom:without disturbance rejection~\cite{LeeLeoPICDC10}.} \label{fig:photo_hover_flight}
\end{figure}

\subsection{Geometric Adaptive Control for Position Tracking}\label{sec:trac_flight}

In this section, the performance of the adaptive controller for the trajectory tracking is observed.
The desired trajectory is given by
\begin{gather}
x_d(t)=\begin{bmatrix} -0.67\\0.2 - 1.2 \cos(\frac{\pi t}{12} )\\-1.57\end{bmatrix},\quad b_{1}(t_k)=\begin{bmatrix} 1\\0\\0\end{bmatrix},\label{eqn:towardfan_xd}
\end{gather}
which is a sinusoidal oscillation along the second inertial axis. 

Figures \ref{fig:torwardfan_NNvsPDvsPID_xv}--\ref{fig:torwardfan_Delta} show the experimental data. 
The black lines show the desired trajectories.
The trajectories without disturbance rejection are plotted in blue, and those with the proposed adaptive controller in red.
It is illustrated that the proposed controller yields smaller tracking errors without excessive rotor thrust. 

\begin{figure}
	\centerline{
		\subfigure[Position ($m$)]{\includegraphics[width=0.5\columnwidth]{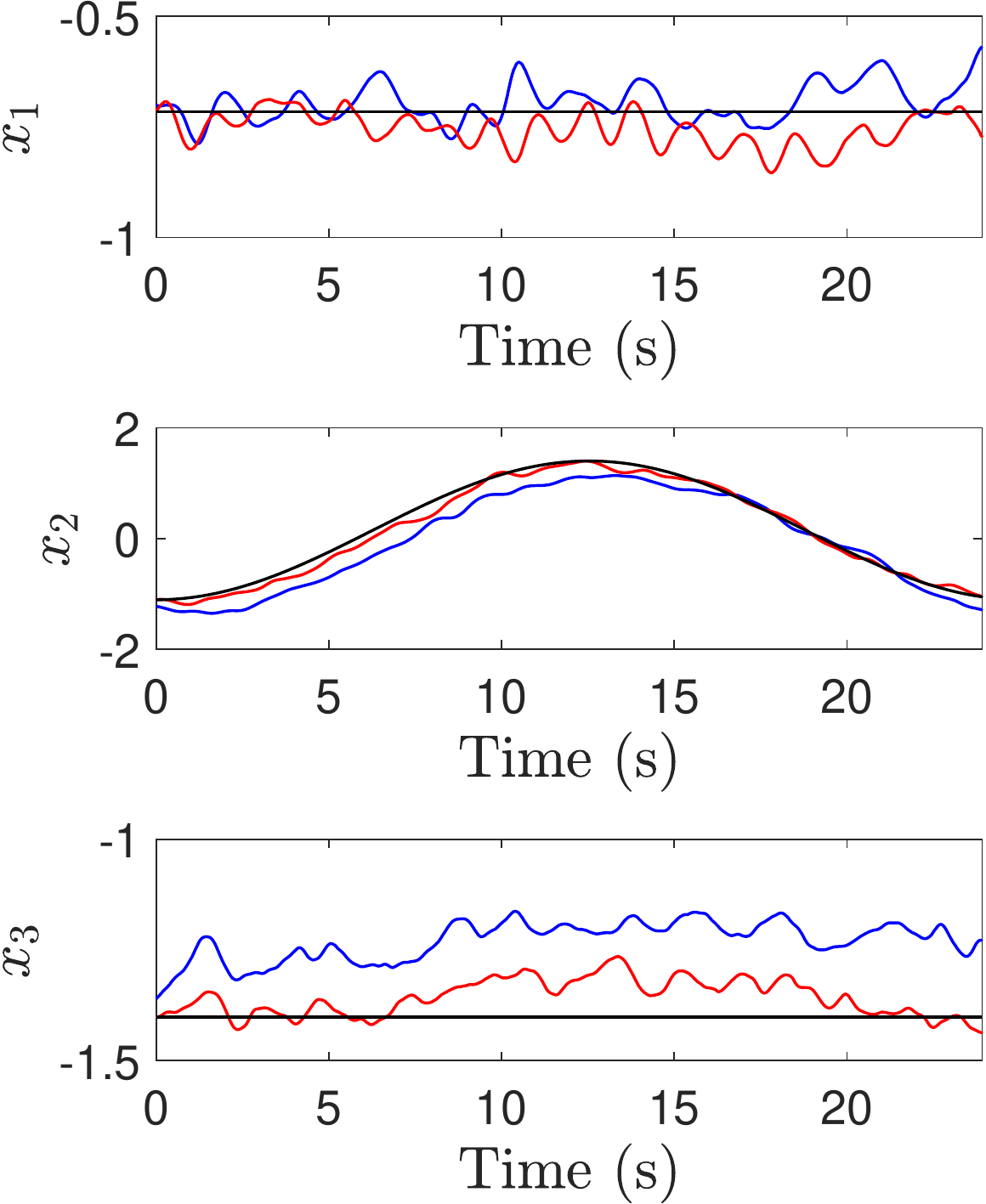}}
		\hfill
		\subfigure[Velocity ($\si{\meter \per \second}$)]{\includegraphics[width=0.5\columnwidth]{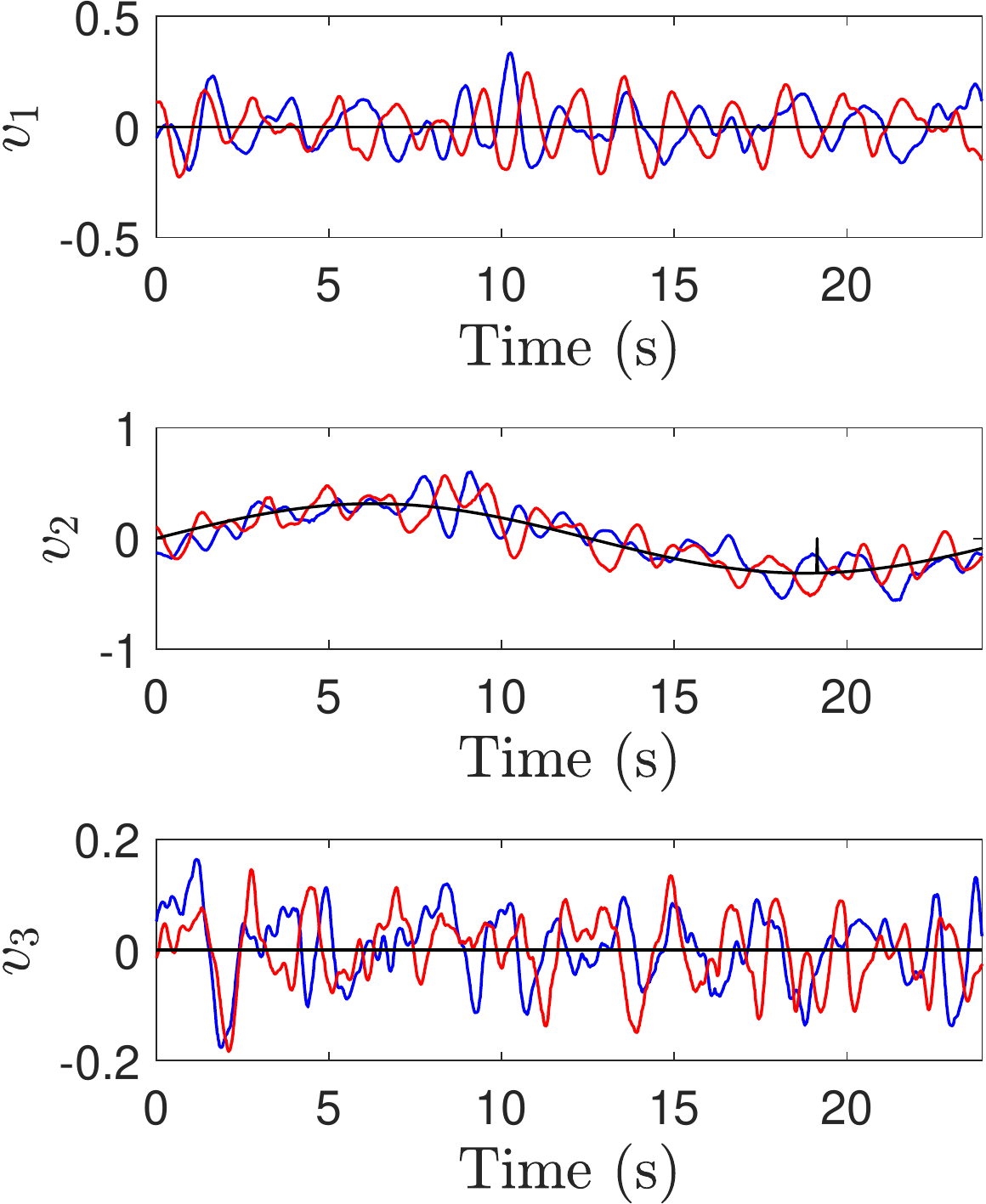}}
	}
	\centerline{
		\subfigure[Attitude]{\includegraphics[width=1\columnwidth]{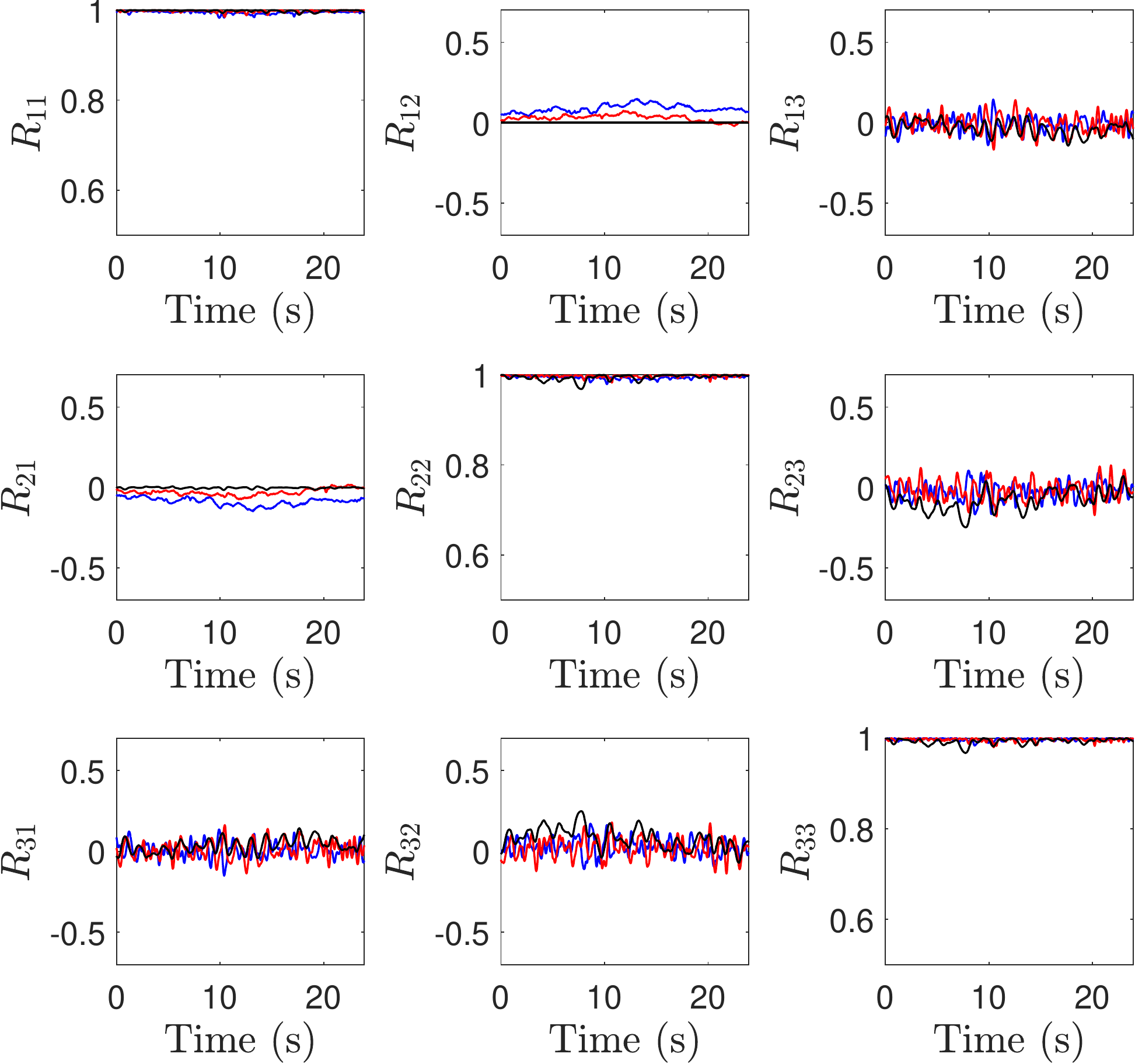}}
	}
    \caption{Position tracking (position, velocity, rotation matrix) black:desired, blue:without disturbance rejection~\cite{LeeLeoPICDC10}, red: adaptive controller}
	\label{fig:torwardfan_NNvsPDvsPID_xv} 
\end{figure}

\begin{figure}
	\centerline{
		\subfigure[Position error ($m$)]{\includegraphics[width=0.5\columnwidth]{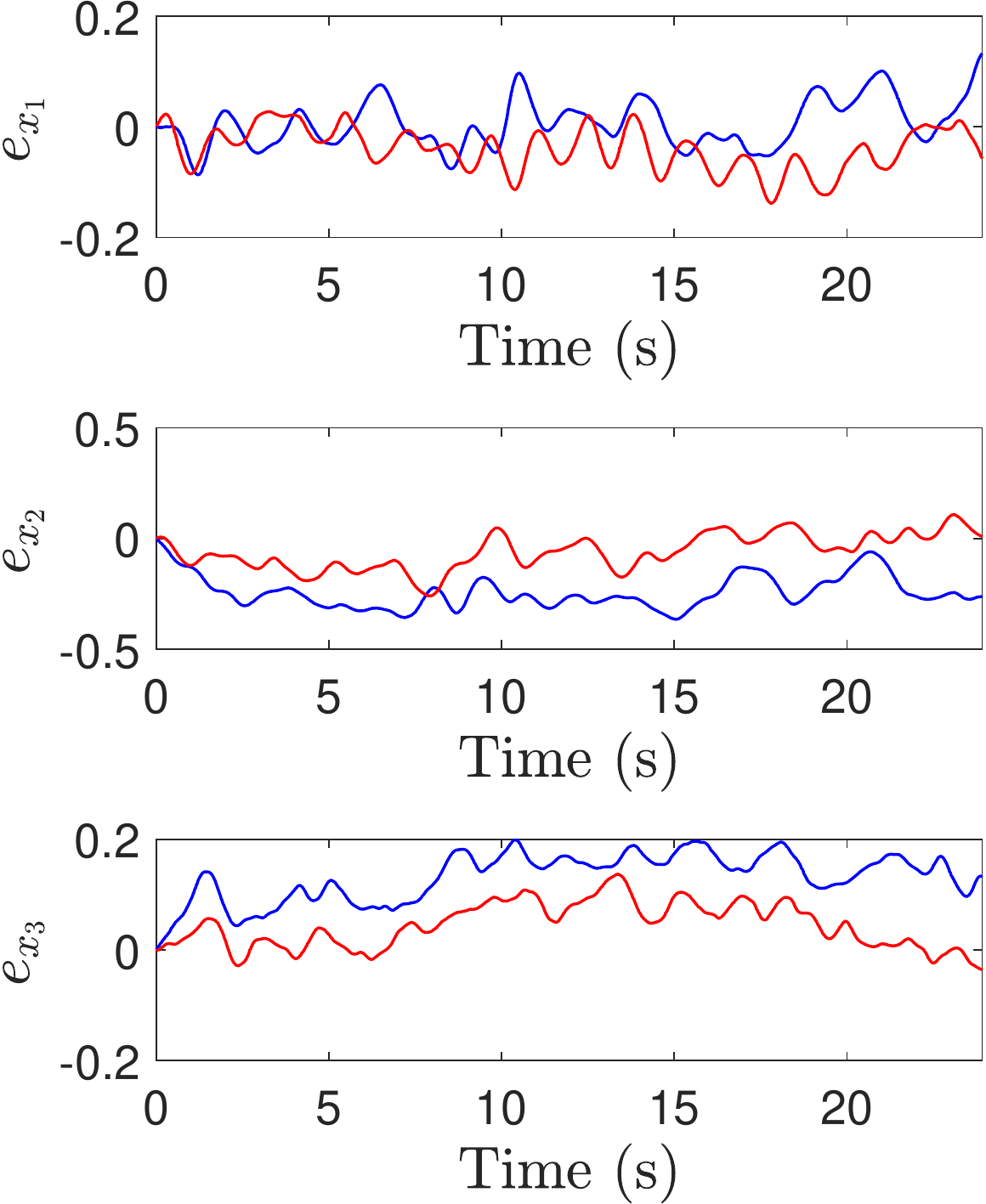}}
		\hfill
		\subfigure[Velocity error ($\si{\meter \per \second}$)]{\includegraphics[width=0.5\columnwidth]{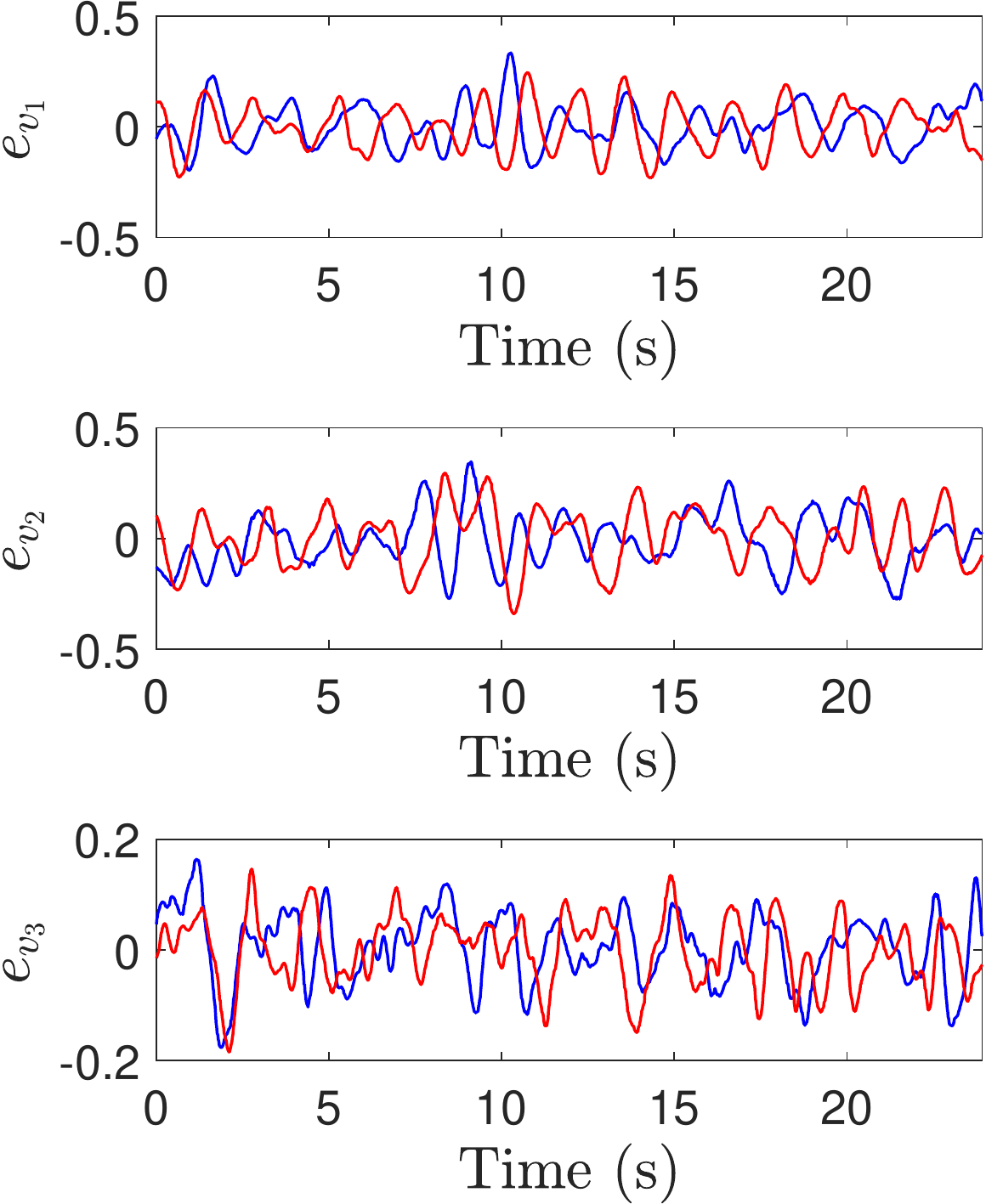}}
	}
	\centerline{
		\subfigure[Attitude error]{\includegraphics[width=0.5\columnwidth]{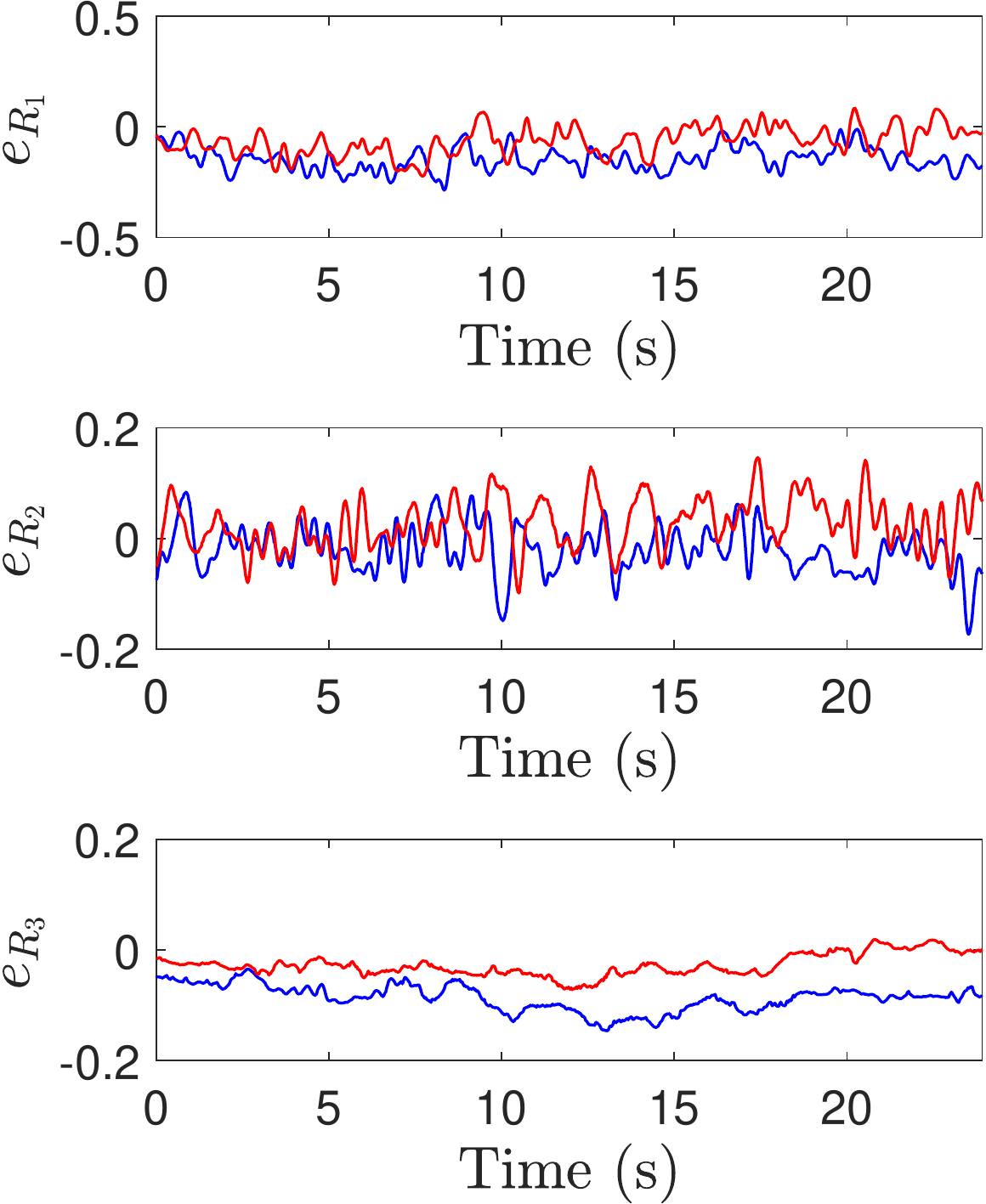}}
		\hfill
		\subfigure[Angular velocity error ($ \si{\radian \per \second}$]{\includegraphics[width=0.5\columnwidth]{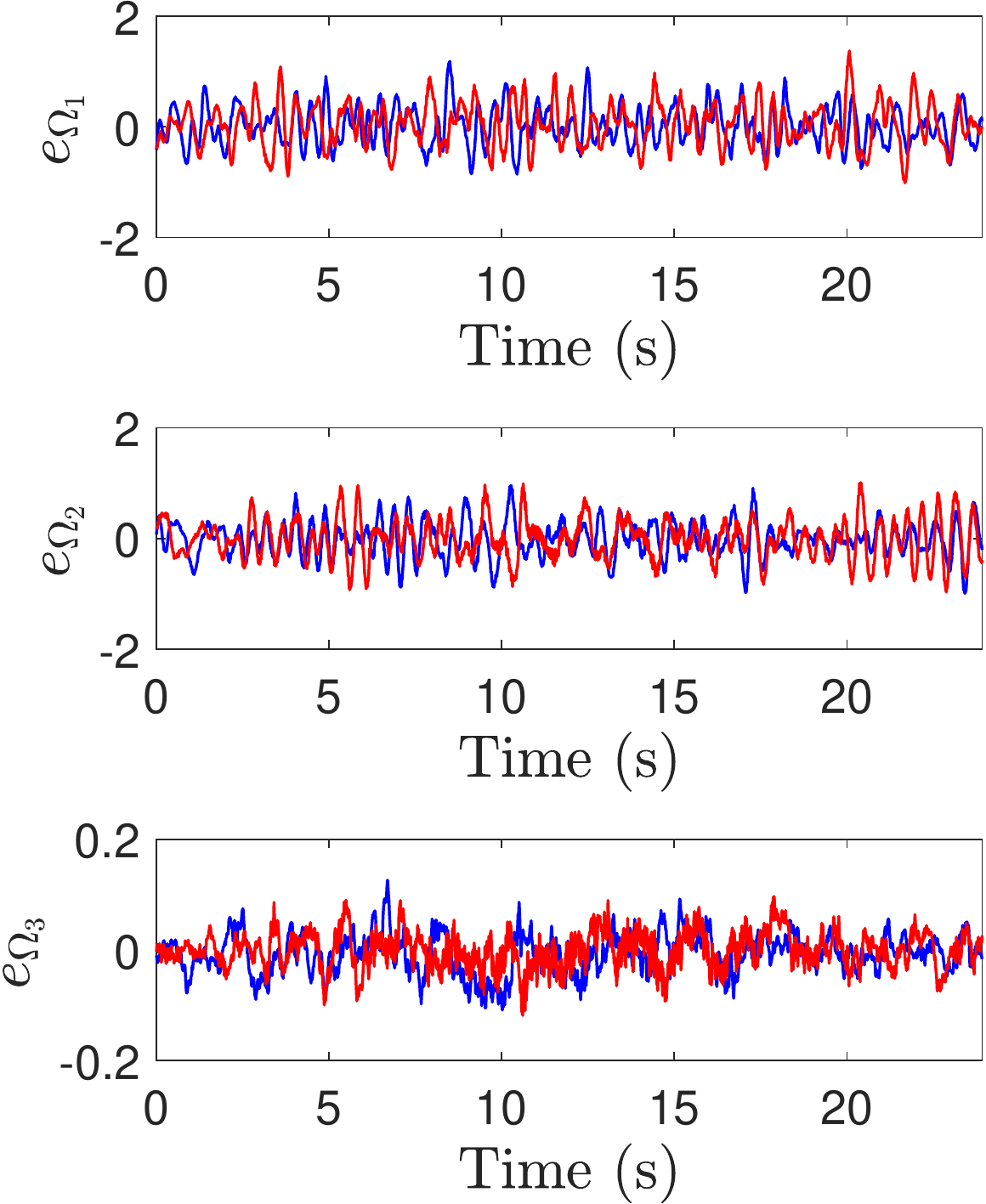}}
	}
    \caption{Position tracking (tracking errors), blue:without disturbance rejection~\cite{LeeLeoPICDC10}, red: adaptive controller}
	\label{fig:torwardfan_NNvsPDvsPID_eXeV} 
\end{figure}

\begin{figure}
	\centerline{
		\subfigure[$\Delta_1$ for position]{\includegraphics[width=0.5\columnwidth]{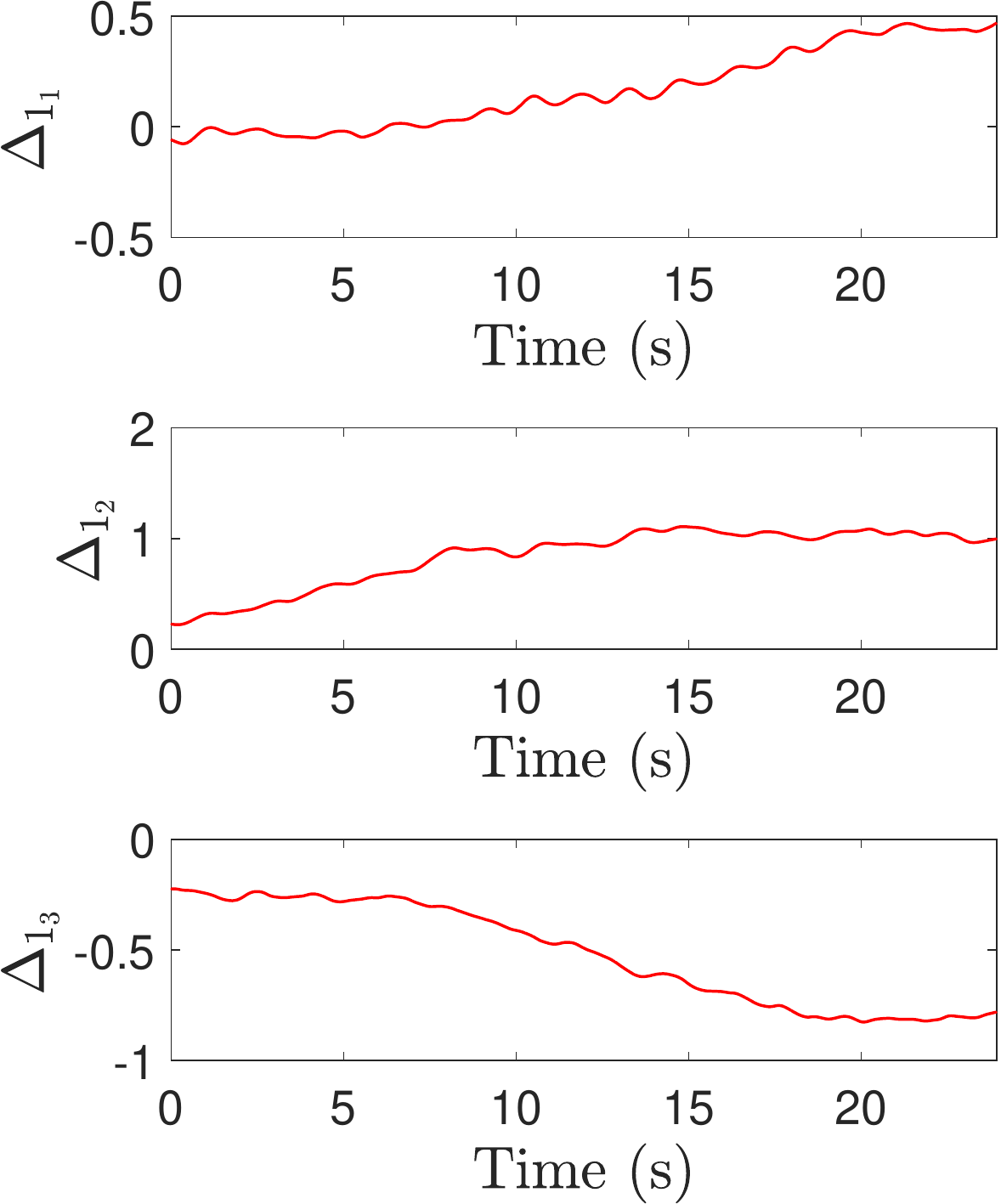}}
		\hfill
		\subfigure[$\Delta_2$ for attitude]{\includegraphics[width=0.5\columnwidth]{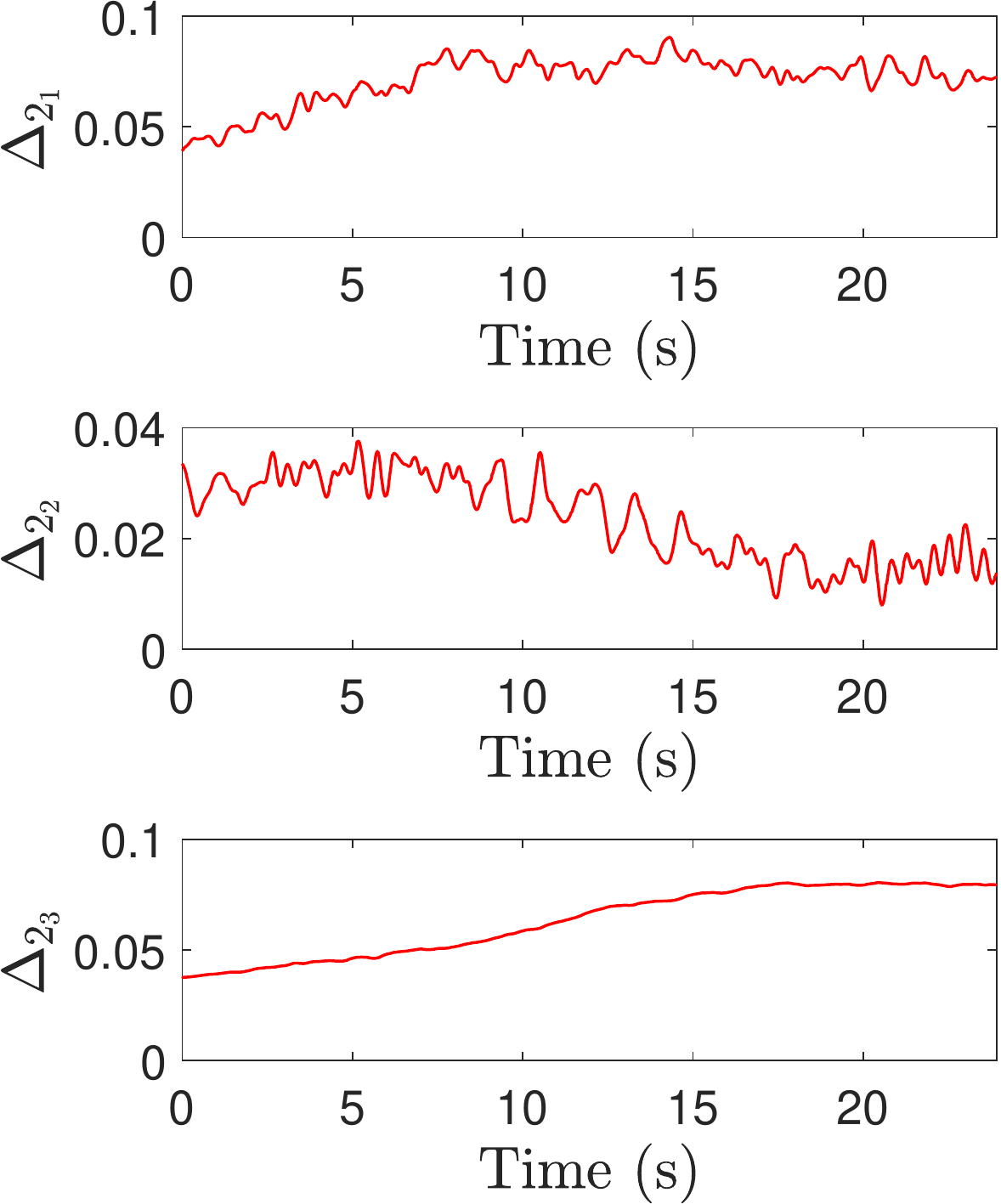}}
	}
	\centerline{
		\subfigure[Thrust ($\si{\newton}$), blue:without disturbance rejection~\cite{LeeLeoPICDC10} with wind, red: adaptive controller with wind]{\includegraphics[width=0.5\columnwidth]{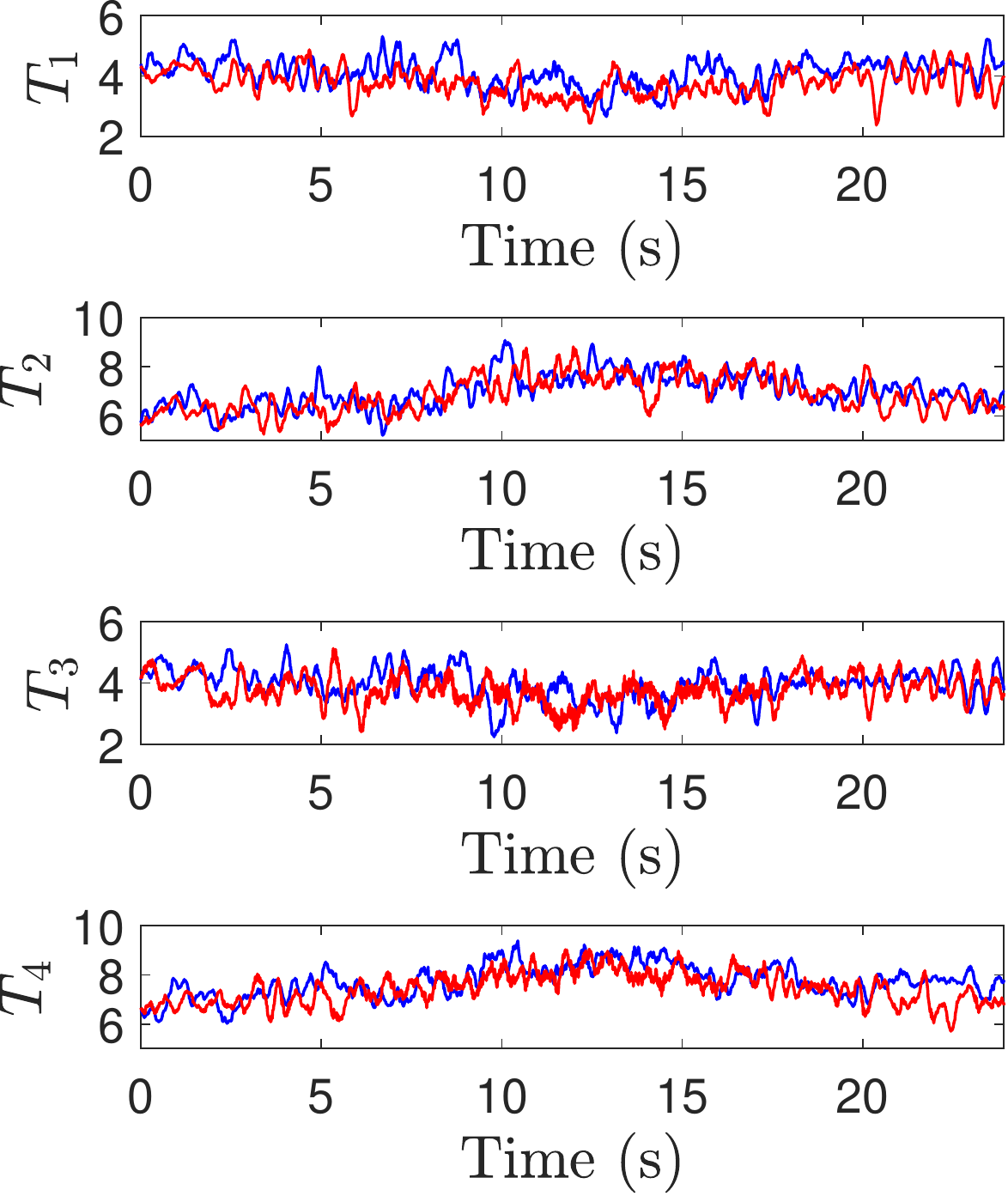}}
	}
    \caption{Position tracking (adaptive terms and thrust), blue:without disturbance rejection~\cite{LeeLeoPICDC10}, red: adaptive controller}
	\label{fig:torwardfan_Delta} 
\end{figure}

\subsection{Geometric Adaptive Control for Backflip}

The illustrate the performance of the proposed control system through an agile maneuver, here we present experimental results for a backflip maneuver. 

The desired trajectory is defined in the three sequences, including taking-off, backflip, and hovering. 
First, the quadrotor takes off to reach the desired upward velocity from $t_0=0 \si{\second}$ to $t_1=2.20\si{\second}$ as follows.
\begin{gather}
    x_d(t)=x_0+\frac{a t^2}{2}\begin{bmatrix}0\\0\\ 1 \end{bmatrix},\quad b_{1_d}=\begin{bmatrix}1\\0\\0\end{bmatrix},
\end{gather}
where $x_0=[-0.22,\,0.47,\,-0.50]^T$, $a = -0.50$, and the quadrotor is controlled using \refeqn{f}--\refeqn{A}. 

In the next step, the attitude is controlled with \refeqn{Mc} to rotate the quadrotor by $360^\circ$ along the $b_{1_d}=e_1$ axis.
The desired attitude trajectory is chosen as
\begin{gather}
    R_d(t)=\exp(\theta_d(t) \hat{b}_{1_{d}}),
\end{gather}
where the rotation angle is chosen as a second order polynomial of time,
\begin{equation}
    \theta_d(t)=\begin{cases}
\frac{1}{2} \alpha_{m} (t-t_1)^2 & \text{if $t_1< t< t_1+\frac{\delta t}{2} $} \\
&\\
\frac{1}{2}\Delta_t \alpha_{m} (t-t_1) & \\
-\frac{1}{2} \alpha_{m} (t-t_1-\frac{\delta t}{2})^2& \text{if $t_1+\frac{\Delta_t}{2}< t< t_1+\frac{6}{8}\delta t$},
\end{cases}\label{eqn:flip_theta}
\end{equation}
with
\begin{gather}
\alpha_{m} = 60.0,\quad
\Delta_t=\sqrt{\frac{8 \pi}{\alpha_{m}}}.
\end{gather}
The resulting desired angular velocity is 
\begin{equation}
    \Omega_d(t)=\begin{cases}
\alpha_{m} (t-t_1) b_{1_{d}} & \text{if $t_1< t< t_1+\frac{\delta t}{2} $} \\
\alpha_{m} (\Delta_t  + t_1 - t) b_{1_{d}}& \text{if $t_1+\frac{\delta t}{2}< t< t_1+\frac{6}{8}\delta t$},
\end{cases}\label{eqn:flip_wd}
\end{equation}

After backflip, again the quadrotor is controlled using \refeqn{f}--\refeqn{A} to make it hover at a fixed location specified as
\begin{gather}
    x_d(t)=x_0+\frac{a t_1^2}{2}\begin{bmatrix}0\\0\\ 1 \end{bmatrix},\quad b_{1_d}=\begin{bmatrix}1\\0\\0\end{bmatrix}.
\end{gather}

Figures \ref{fig:flip_NNvsPD_xv}--\ref{fig:flip_Delta} show the experimental results. 
The black lines show the desired trajectories.
The trajectories without disturbance rejection are plotted in blue, and with those of the proposed adaptive controller in red. 
The gray lines are to separate the three stages described above. 
The first gray line divides the take-off from the backflip and the second one separates the backflip from the last hovering stage.
For the control system presented in~\cite{LeeLeoPICDC10}, the angular velocity diverges during the backflip stage, resulting in a large attitude tracking error afterwards. 
More specifically, due to wind in $-e_2$ direction, the quadrotor could not  complete a swift rotation during the second step. 
Actually, it rotated only about $180^\circ$ along $e_1$ axis in the second step, and continued the rotation through the third stage, during which the quadrotor fail to regain control and crashes into the floor. 
See Figure \ref{fig:photo_flip_fail} for snapshots. 

In contrast, the proposed geometric adaptive controller with neural network result in a successful backflip maneuver followed by a stable hovering flight, as illustrated in Figure~\ref{fig:photo_flip}. 
It is remarkable that the neural network parameters are adjusted promptly over the short time period of the second backflip stage, to achieve the successful backflip maneuver. 
Such agile maneuver under the effects of wind has not been demonstrated yet\footnote{For the video file of this experiment, visit the FDCL YouTube channel at \url{https://youtu.be/a-DG2PcUu7k} or the experiment section of the FDCL website at \url{http://fdcl.seas.gwu.edu/}.}.

\begin{figure}
	\centerline{
		\subfigure[Position ($m$)]{\includegraphics[width=0.5\columnwidth]{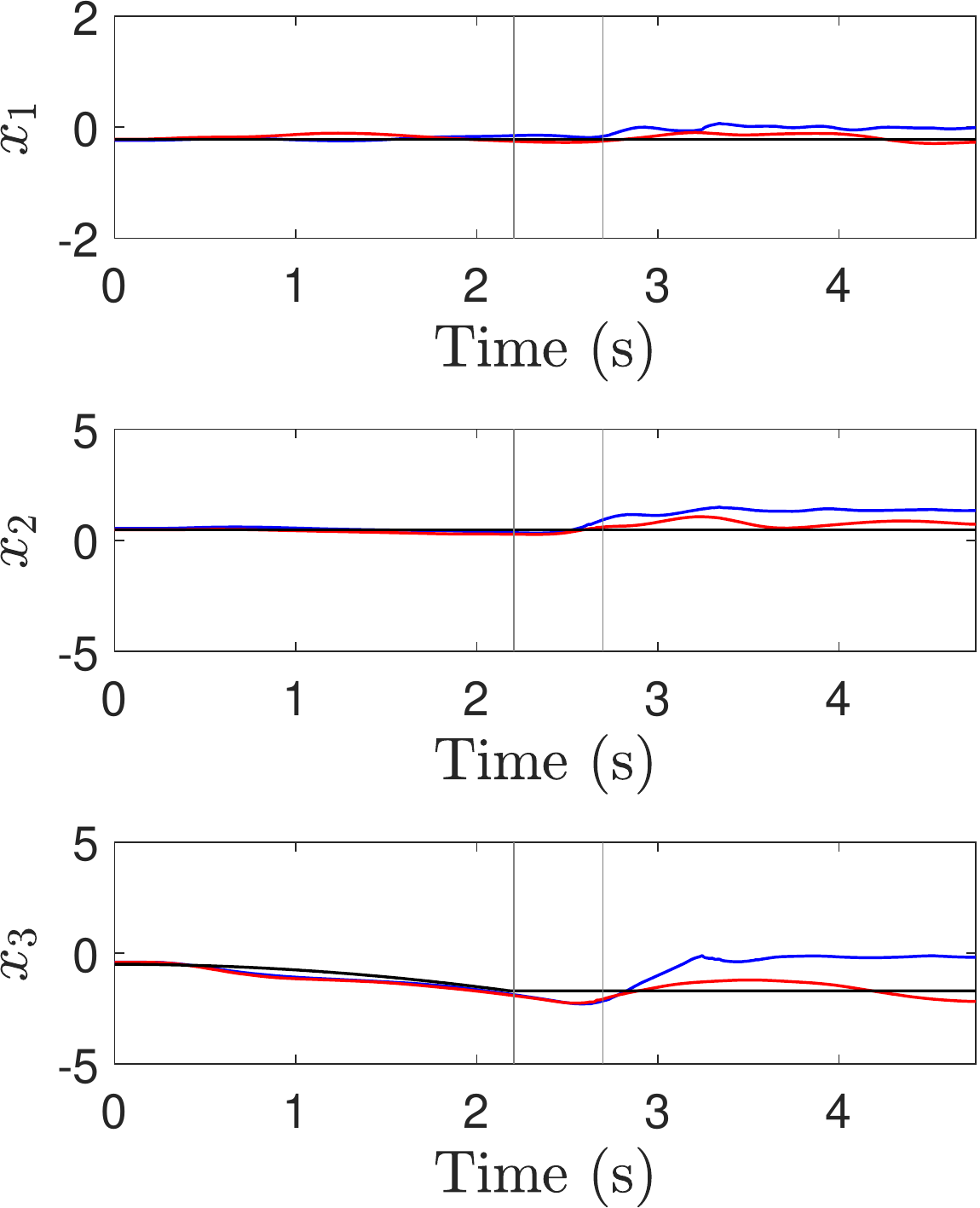}}
		\hfill
		\subfigure[Velocity ($\si{\meter \per \second}$]{\includegraphics[width=0.5\columnwidth]{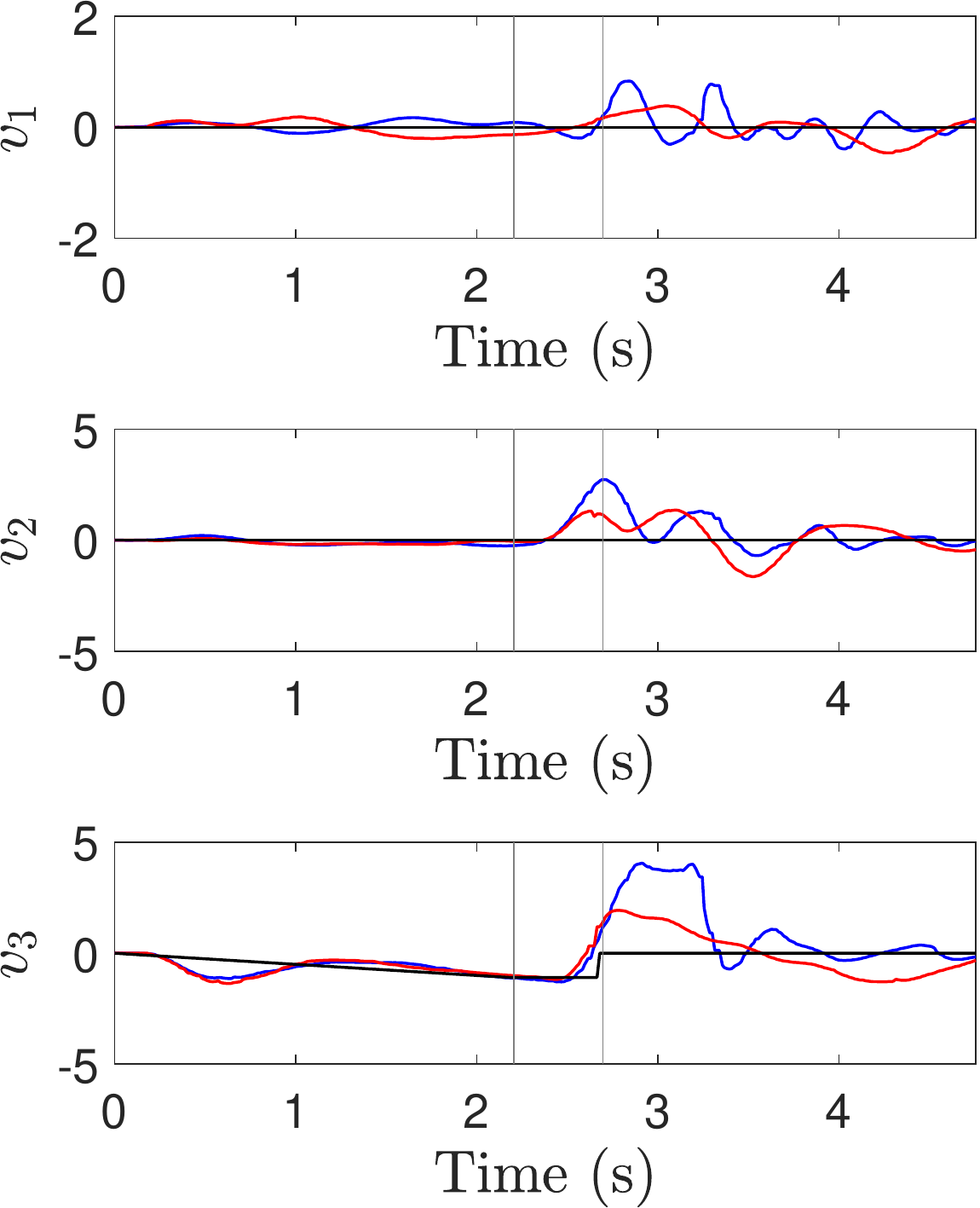}}
	}
	\centerline{
		\subfigure[Attitude]{\includegraphics[width=1\columnwidth]{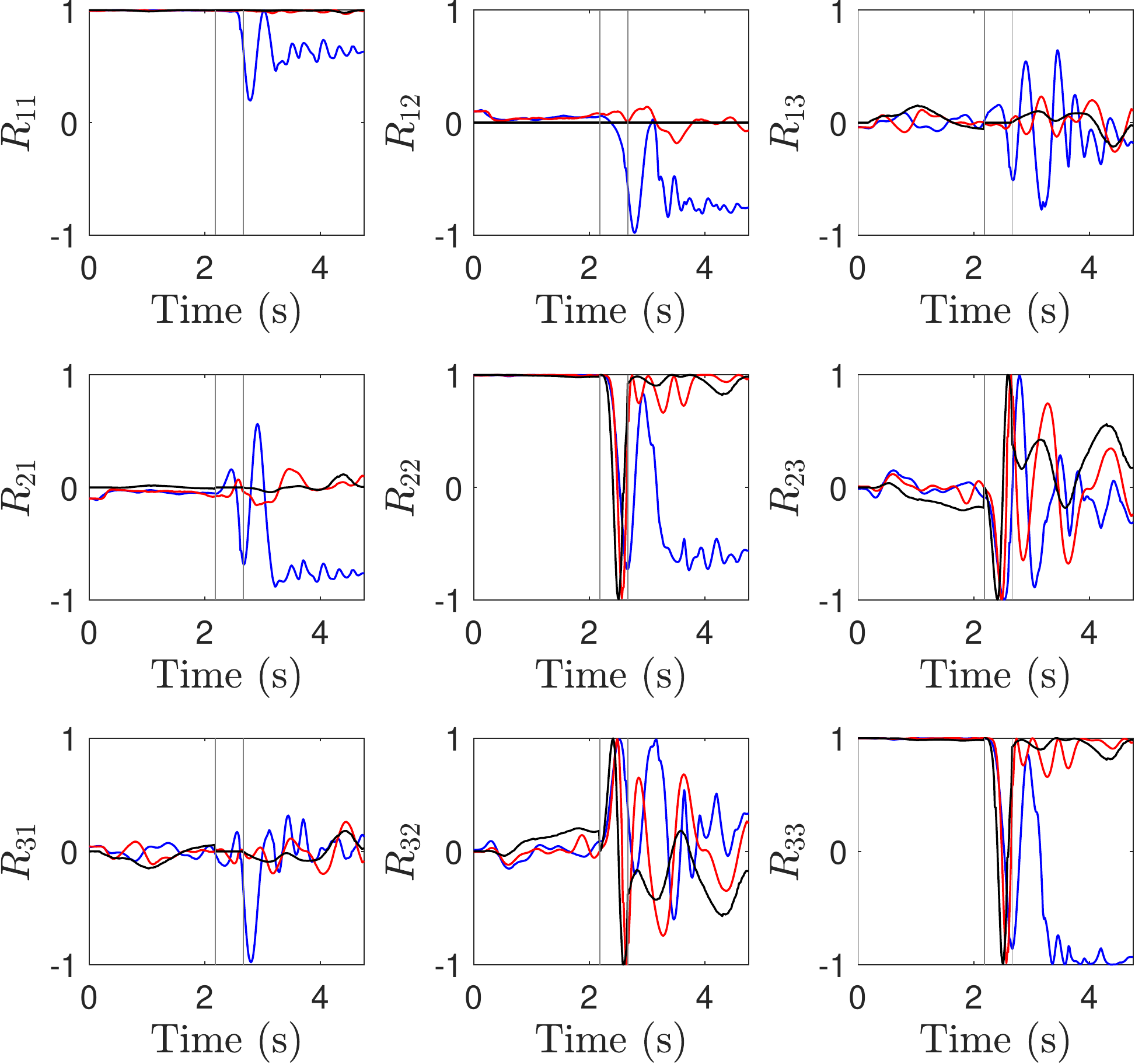}}
	}
    \caption{Backflip (position, velocity, and rotation matrix), black:desired, blue:without disturbance rejection~\cite{LeeLeoPICDC10}, red: adaptive controller}
	\label{fig:flip_NNvsPD_xv} 
\end{figure}

\begin{figure}
	\centerline{
		\subfigure[Position error ($m$)]{\includegraphics[width=0.5\columnwidth]{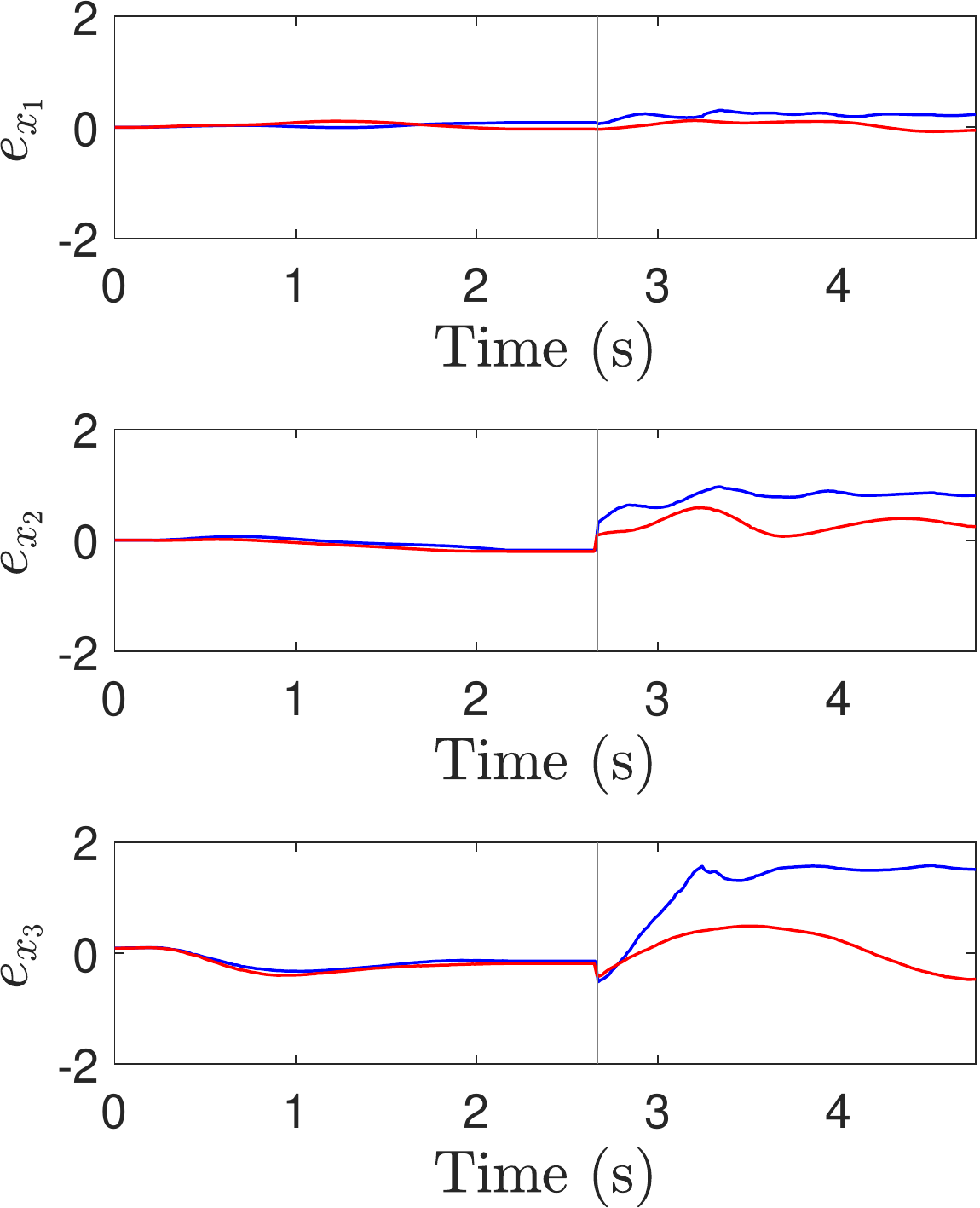}}
		\hfill
		\subfigure[Velocity error ($\si{\meter \per \second})$]{\includegraphics[width=0.5\columnwidth]{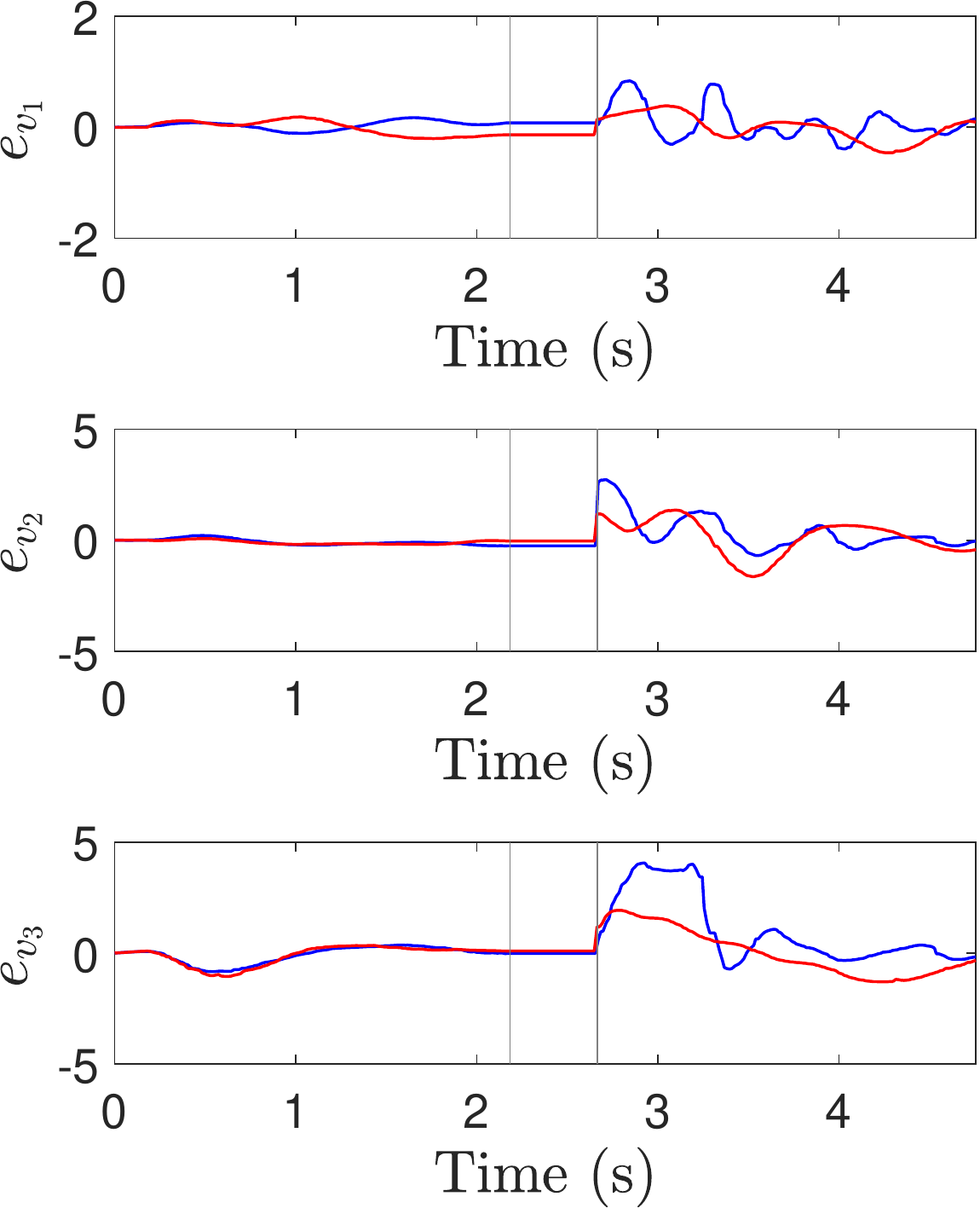}}
	}
	\centerline{
		\subfigure[Attitude error]{\includegraphics[width=0.5\columnwidth]{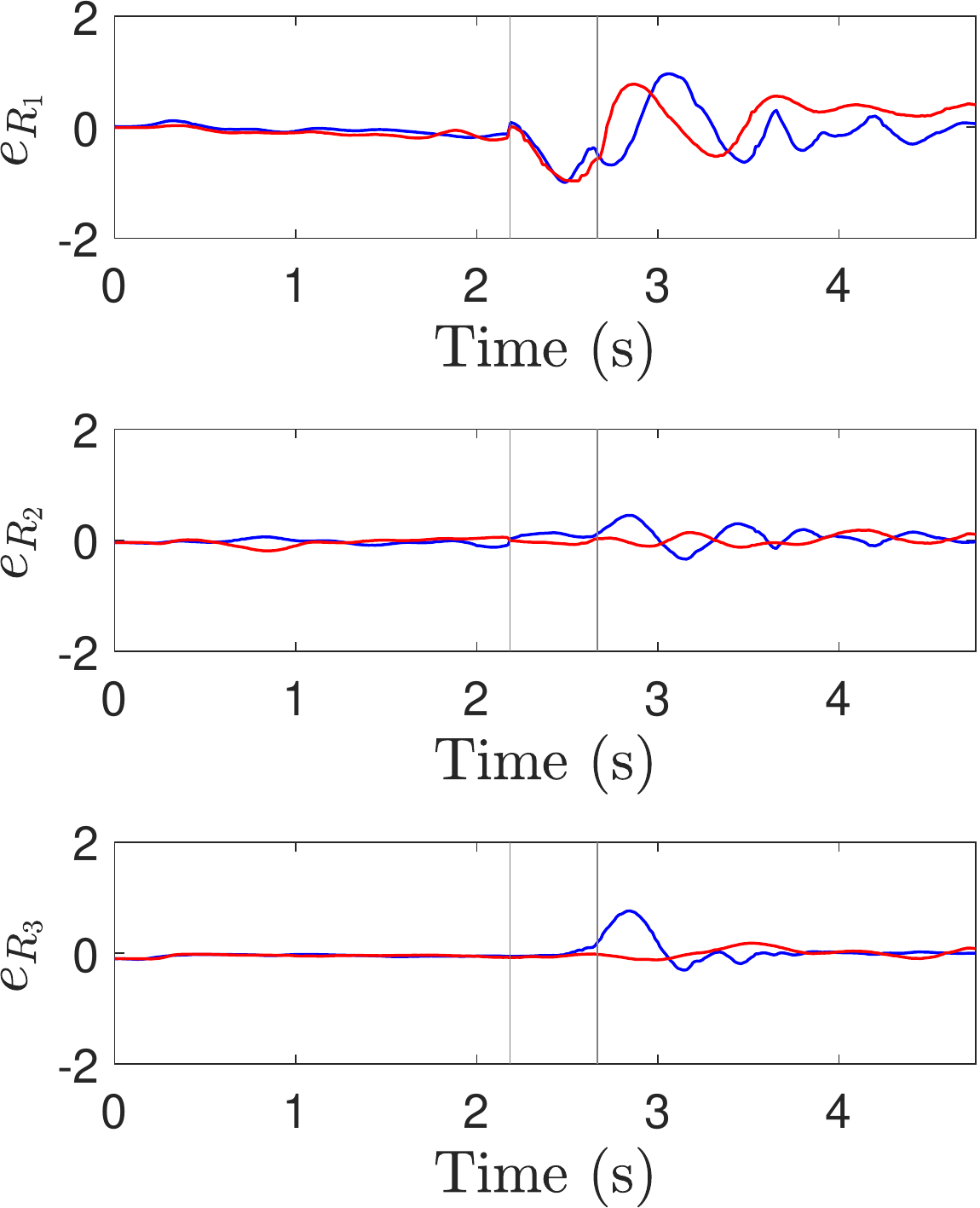}}
		\hfill
		\subfigure[Angular velocity error ($ \si{\radian \per \second}$]{\includegraphics[width=0.5\columnwidth]{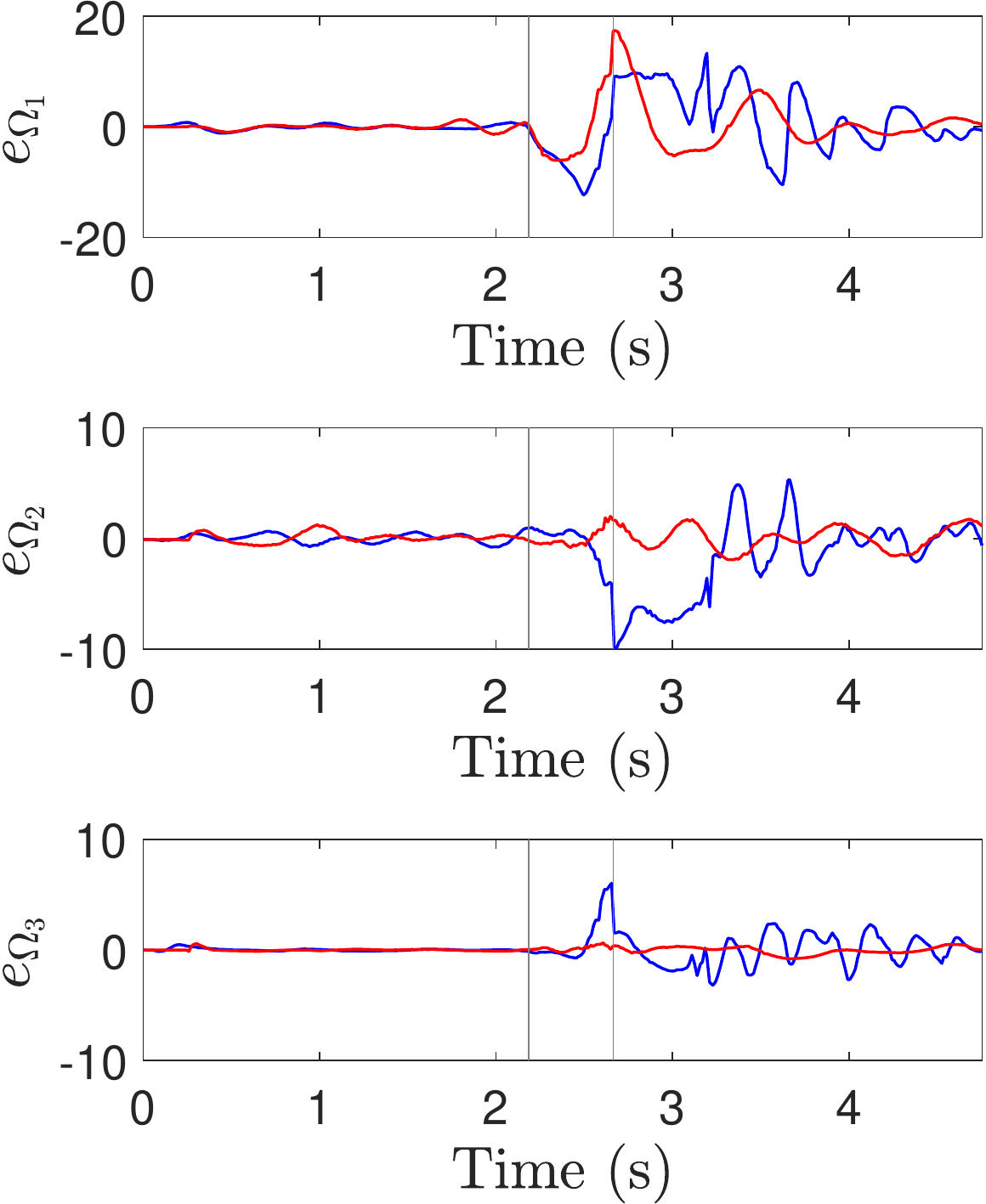}}
	}
    \caption{Backflip (tracking errors), blue:without disturbance rejection~\cite{LeeLeoPICDC10}, red: adaptive controller}
	\label{fig:flip_NNvsPD_eXeV}
\end{figure}

\begin{figure}
	\centerline{
		\subfigure[$\Delta_1$ for position]{\includegraphics[width=0.5\columnwidth]{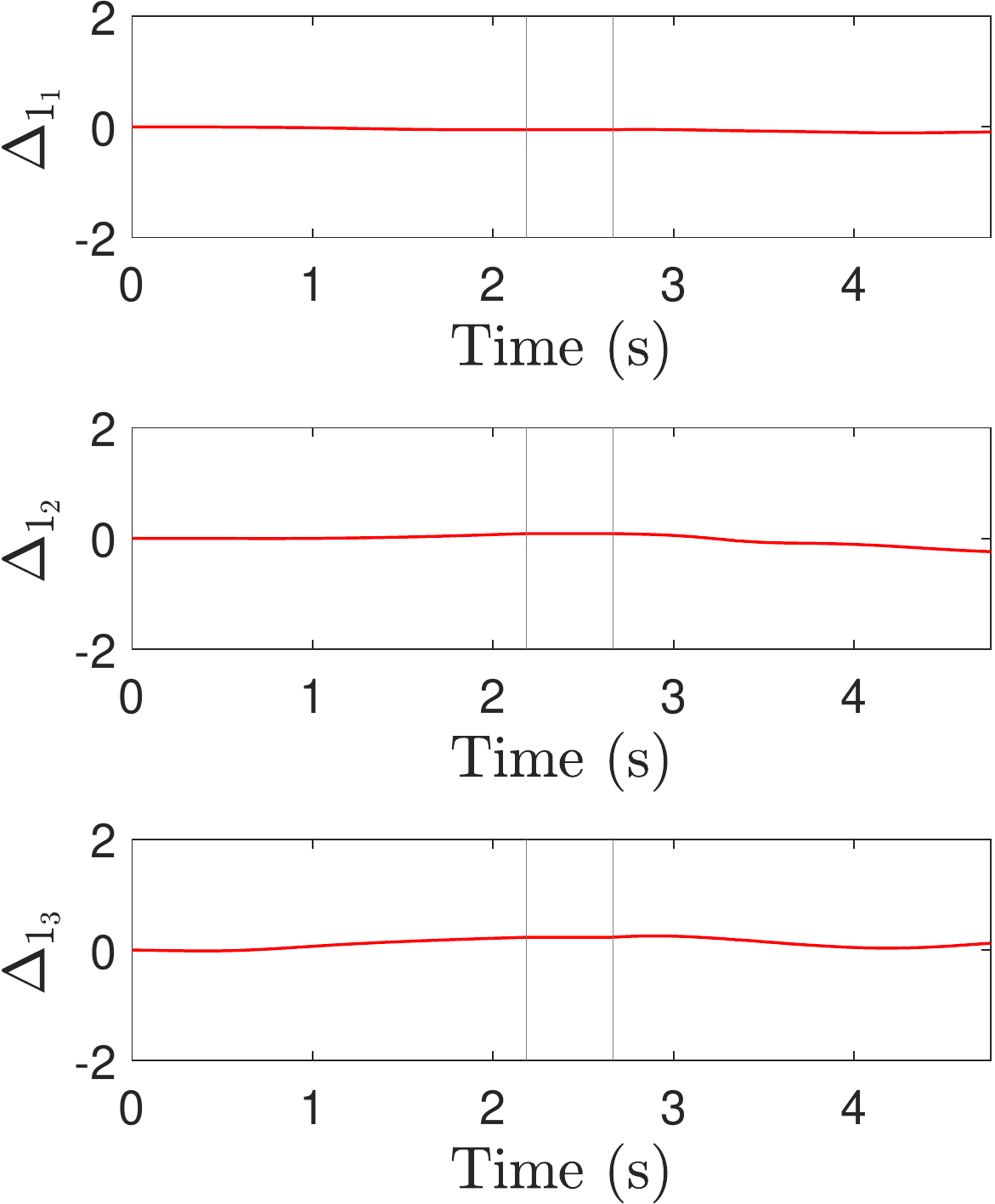}}
	\hfill
	\subfigure[$\Delta_2$ for attitude]{\includegraphics[width=0.5\columnwidth]{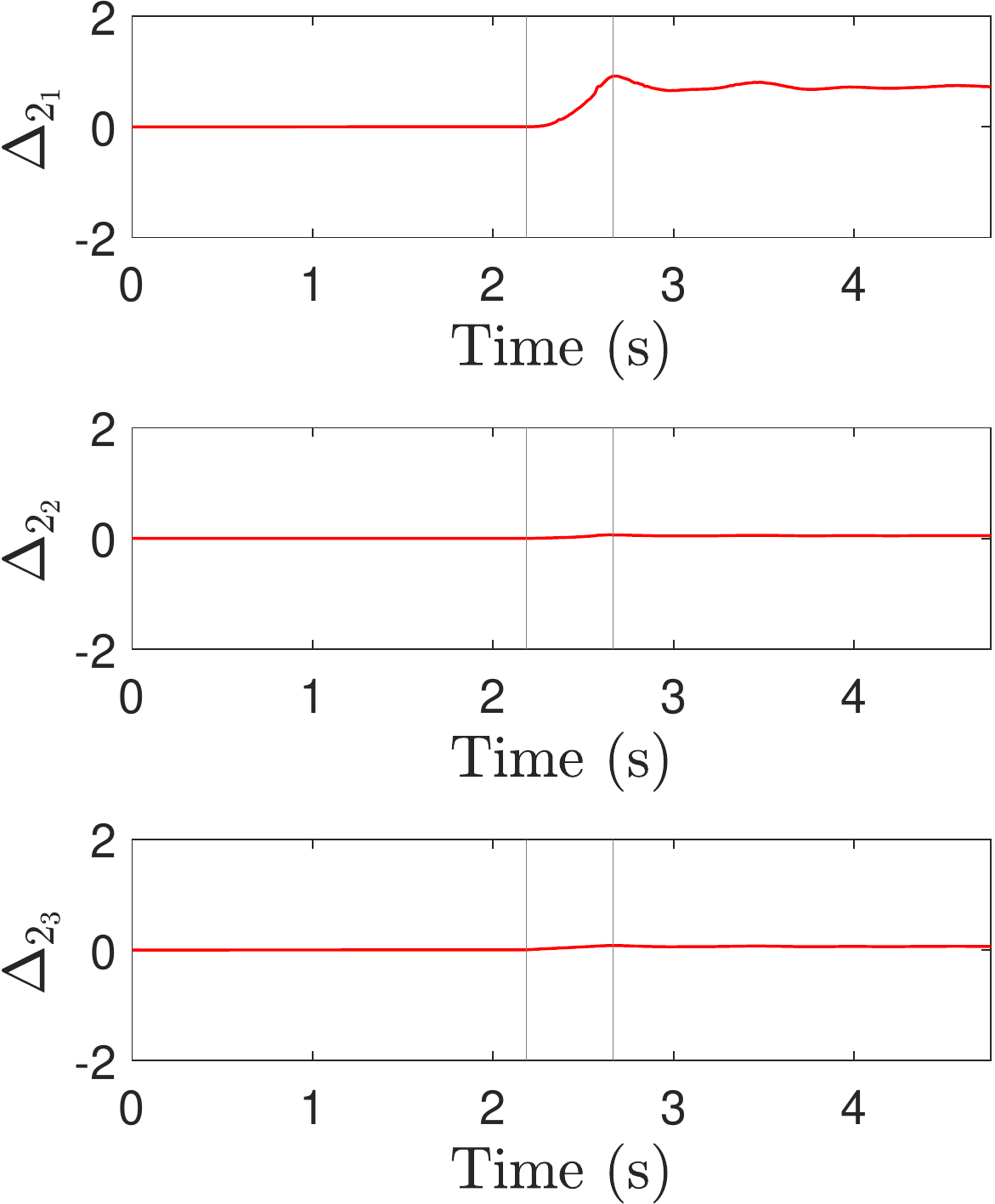}}
	}
	\centerline{
		\subfigure[Angular velocity ($\si{\radian \per \second}$)]{\includegraphics[width=0.5\columnwidth]{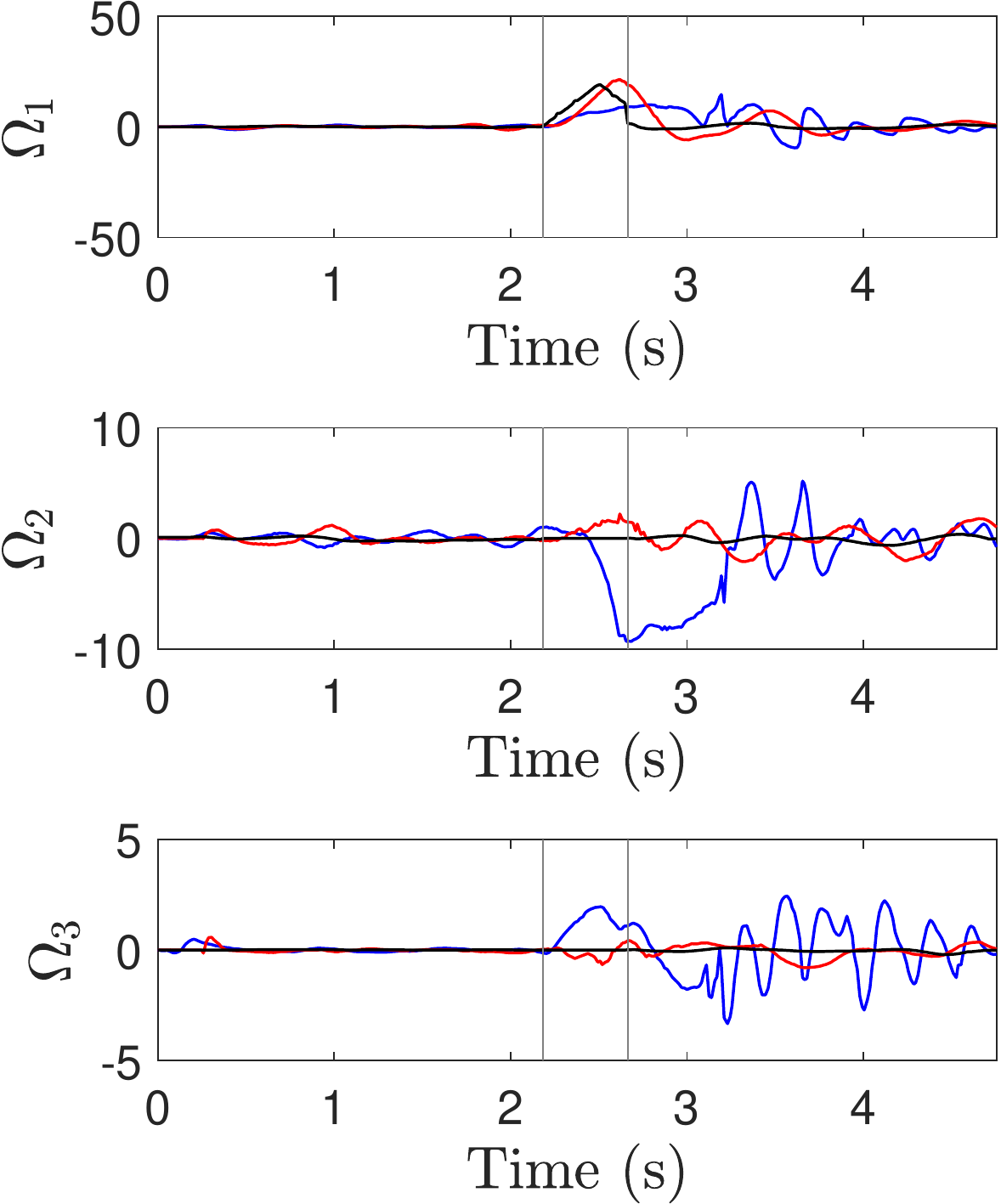}}
	\hfill
	\subfigure[Thrust ($\si{\newton}$)]{\includegraphics[width=0.5\columnwidth]{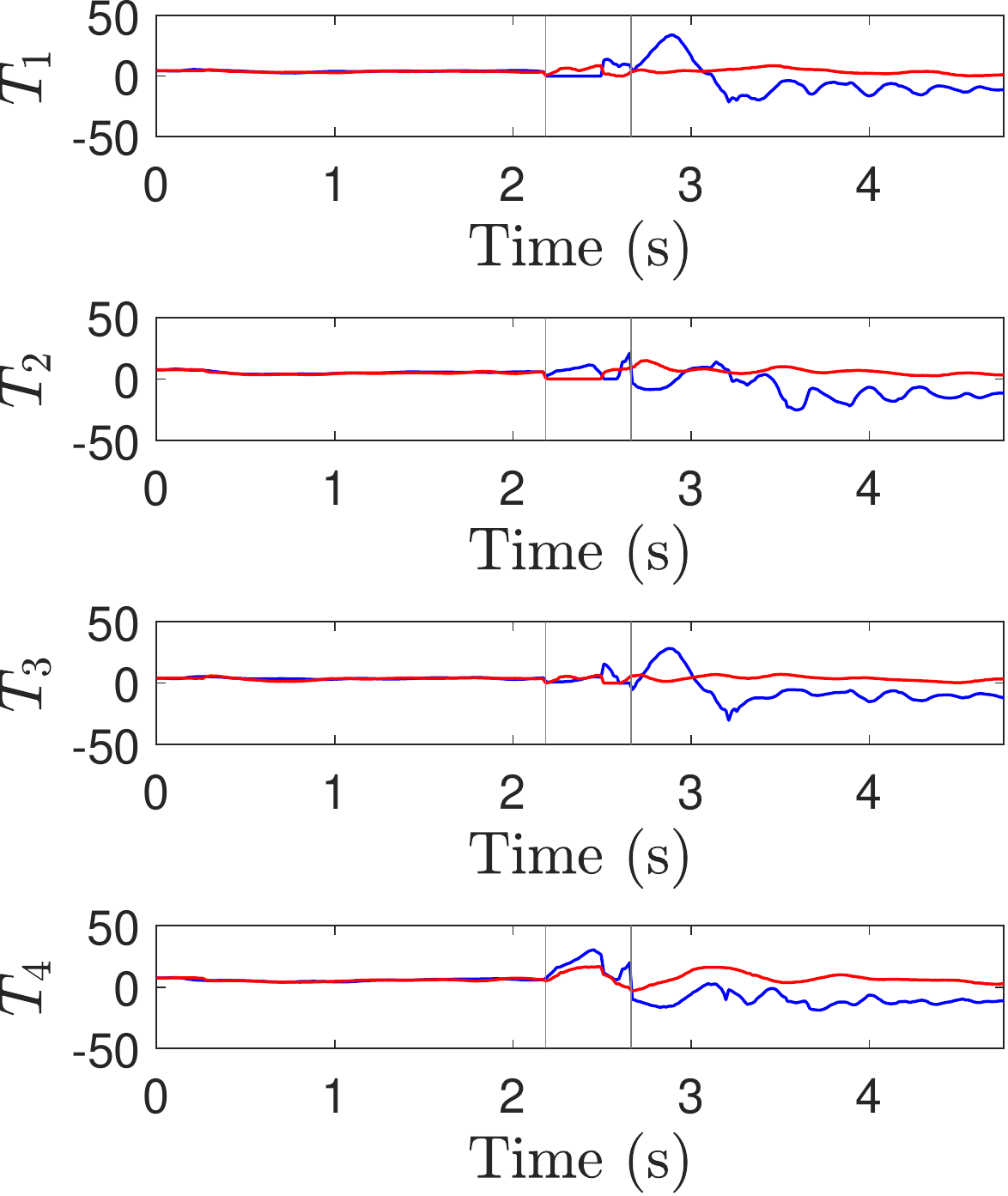}}	
	}
    \caption{Back flip (adaptive terms, angular velocity, and thrust), black:desired, blue:without disturbance rejection~\cite{LeeLeoPICDC10}, red: adaptive controller}
	\label{fig:flip_Delta} 
\end{figure}

\begin{figure}
	\centerline{
		\subfigure[At $t=0\si{\second}$]{\includegraphics[width=0.7\columnwidth]{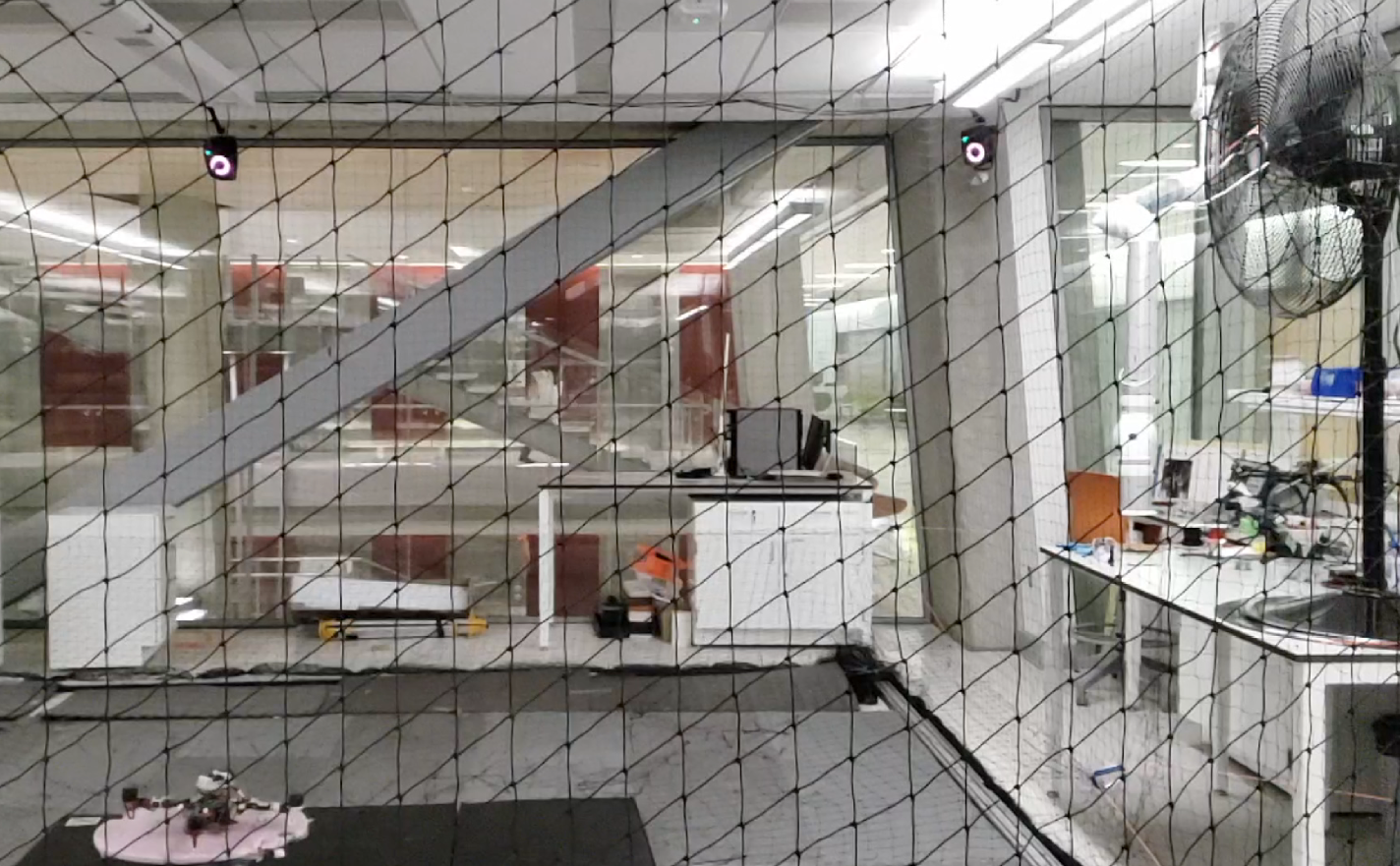}}
	}
	\centerline{
		\subfigure[At $t=2.20\si{\second}$]{\includegraphics[width=0.7\columnwidth]{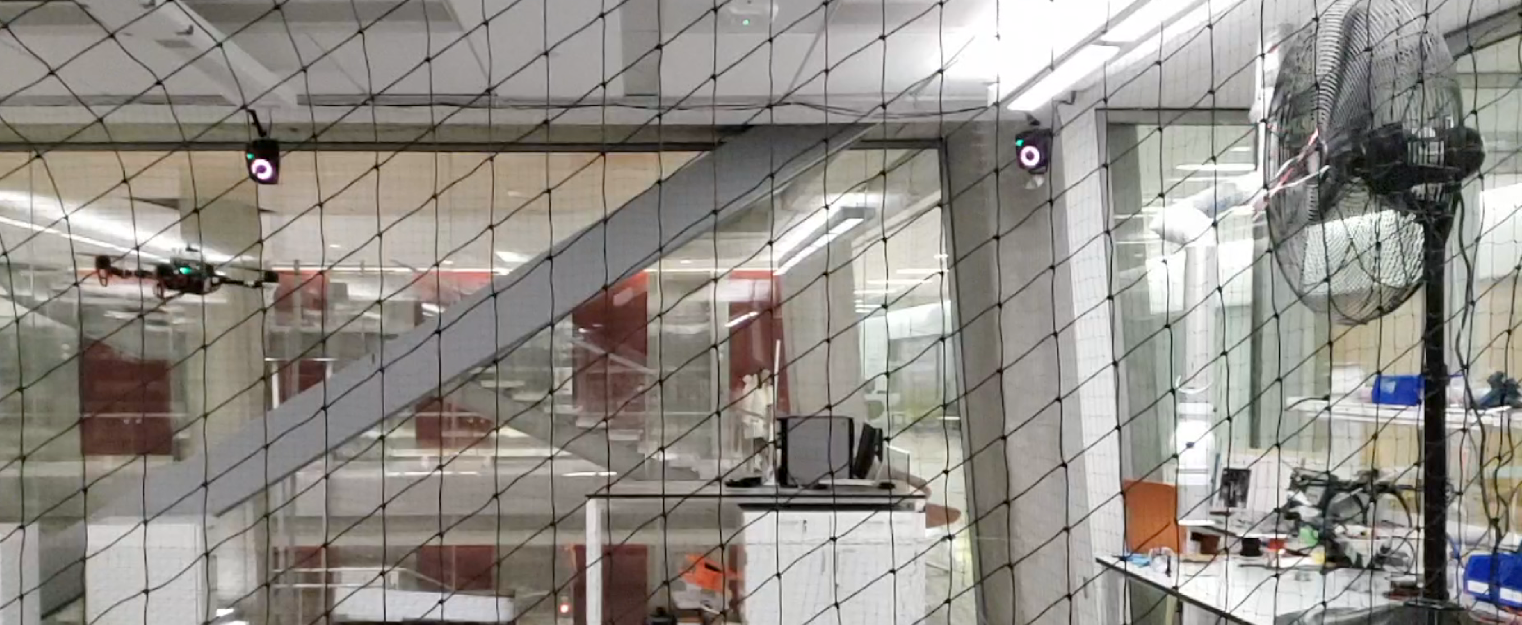}}
	}
	\centerline{
		\subfigure[At $t=2.52\si{\second}$]{\includegraphics[width=0.7\columnwidth]{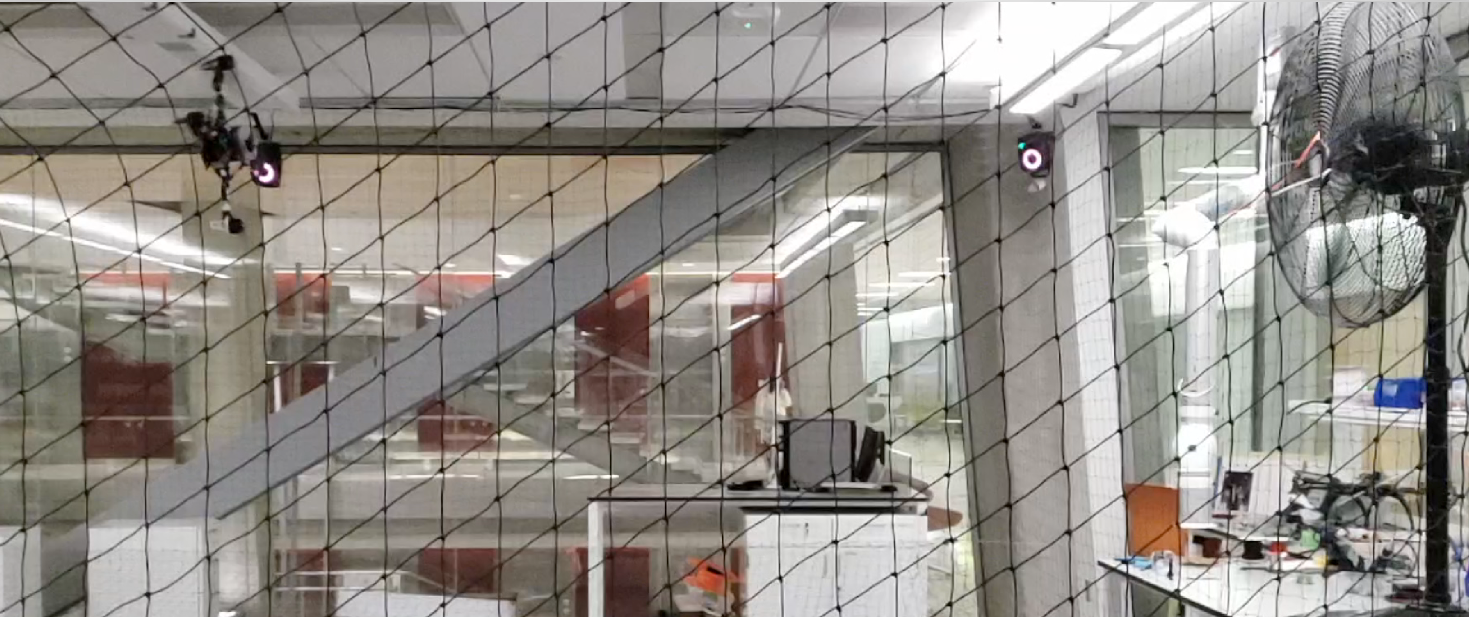}}
	}
	\centerline{
		\subfigure[At $t=2.71\si{\second}$]{\includegraphics[width=0.7\columnwidth]{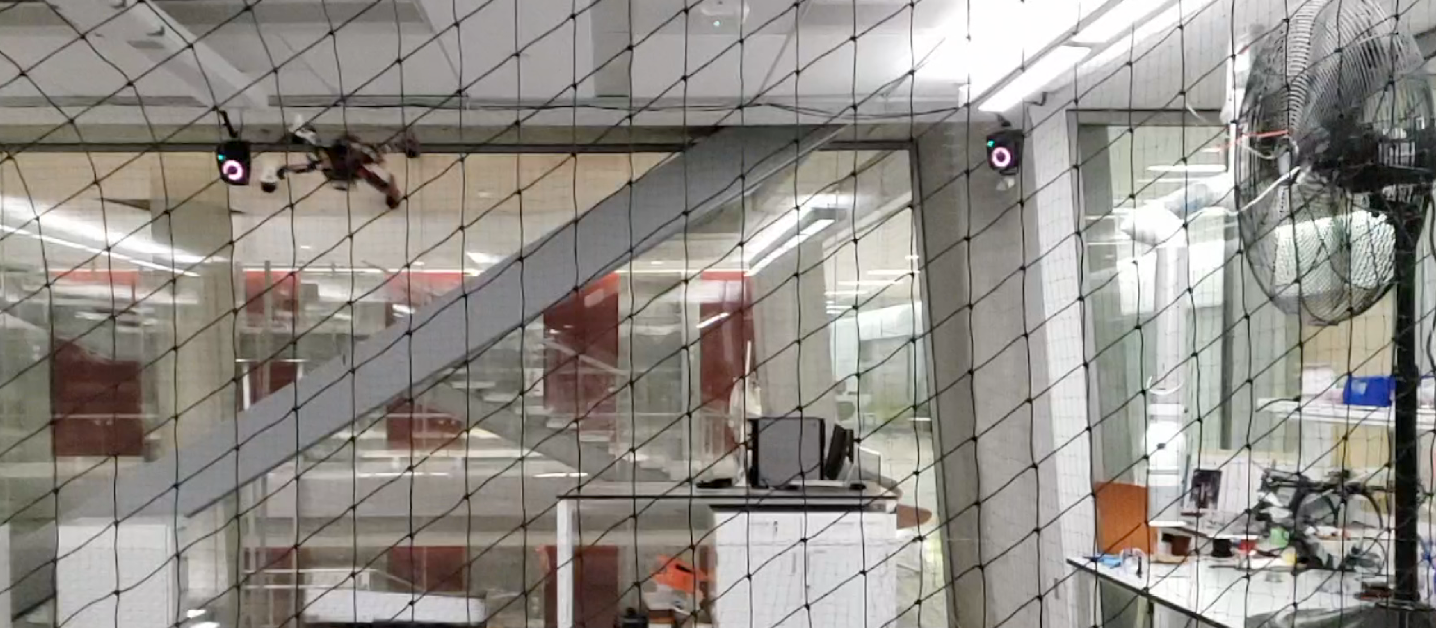}}
	}
	\centerline{
		\subfigure[At $t=2.80\si{\second}$]{\includegraphics[width=0.7\columnwidth]{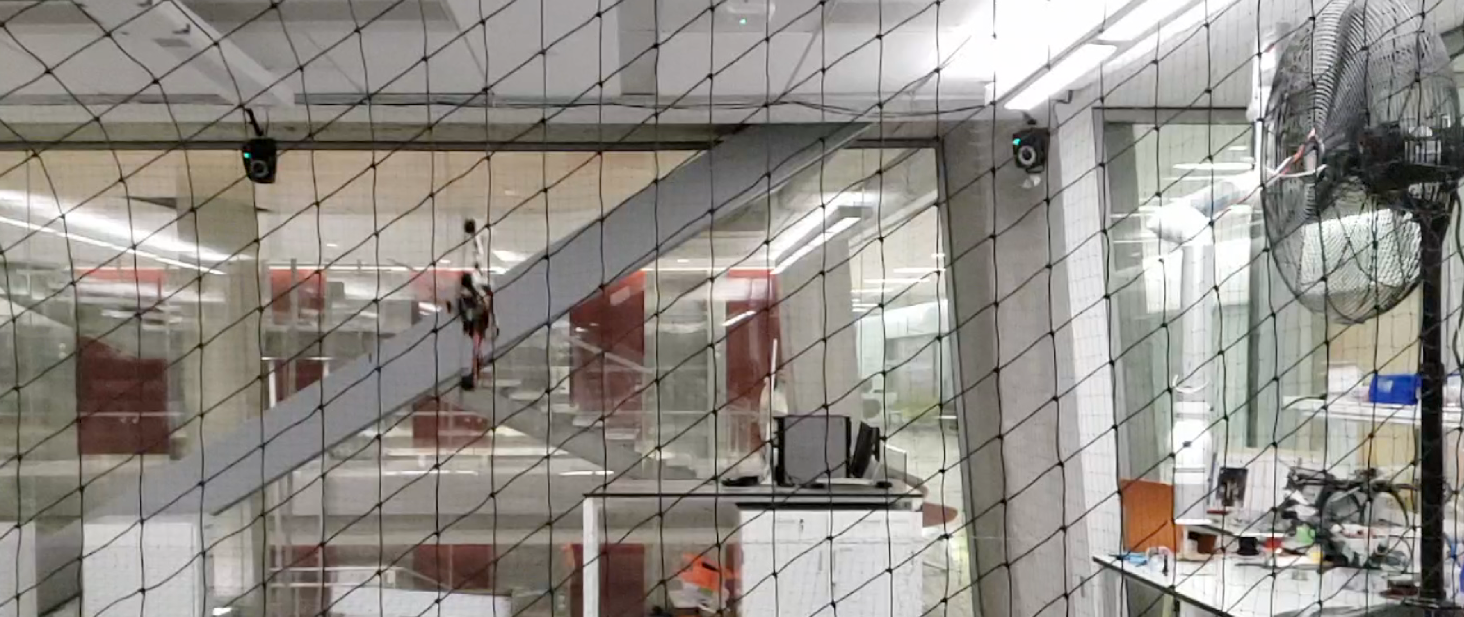}}
	}
	\centerline{
		\subfigure[At $t=3.24\si{\second}$]{\includegraphics[width=0.7\columnwidth]{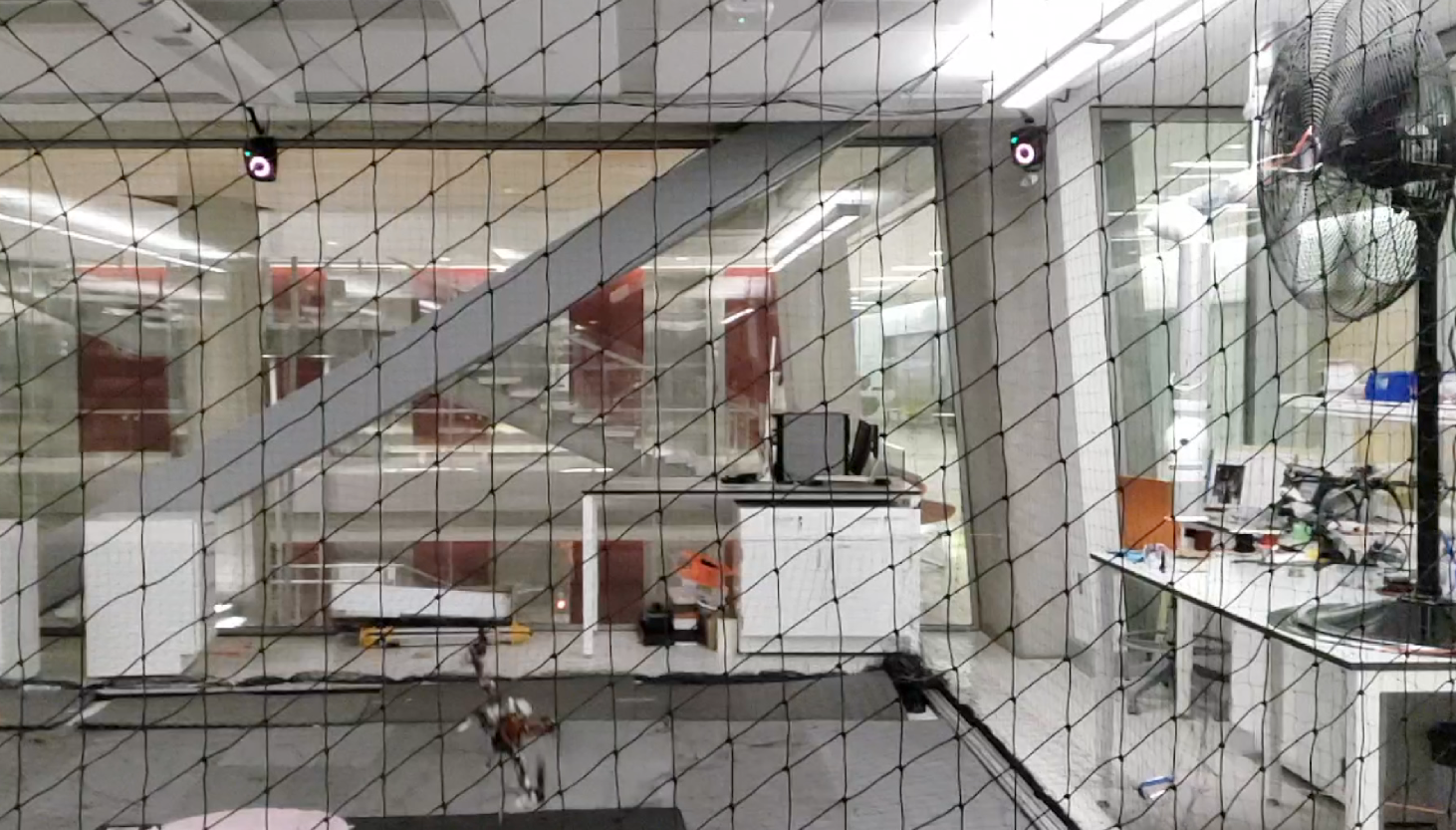}}
	}
    \caption{Backflip without disturbance rejection~\cite{LeeLeoPICDC10} (snapshots}
	\label{fig:photo_flip_fail} 
\end{figure}

\begin{figure}
	\centerline{
		\subfigure[At $t=0\si{\second}$]{\includegraphics[width=0.7\columnwidth]{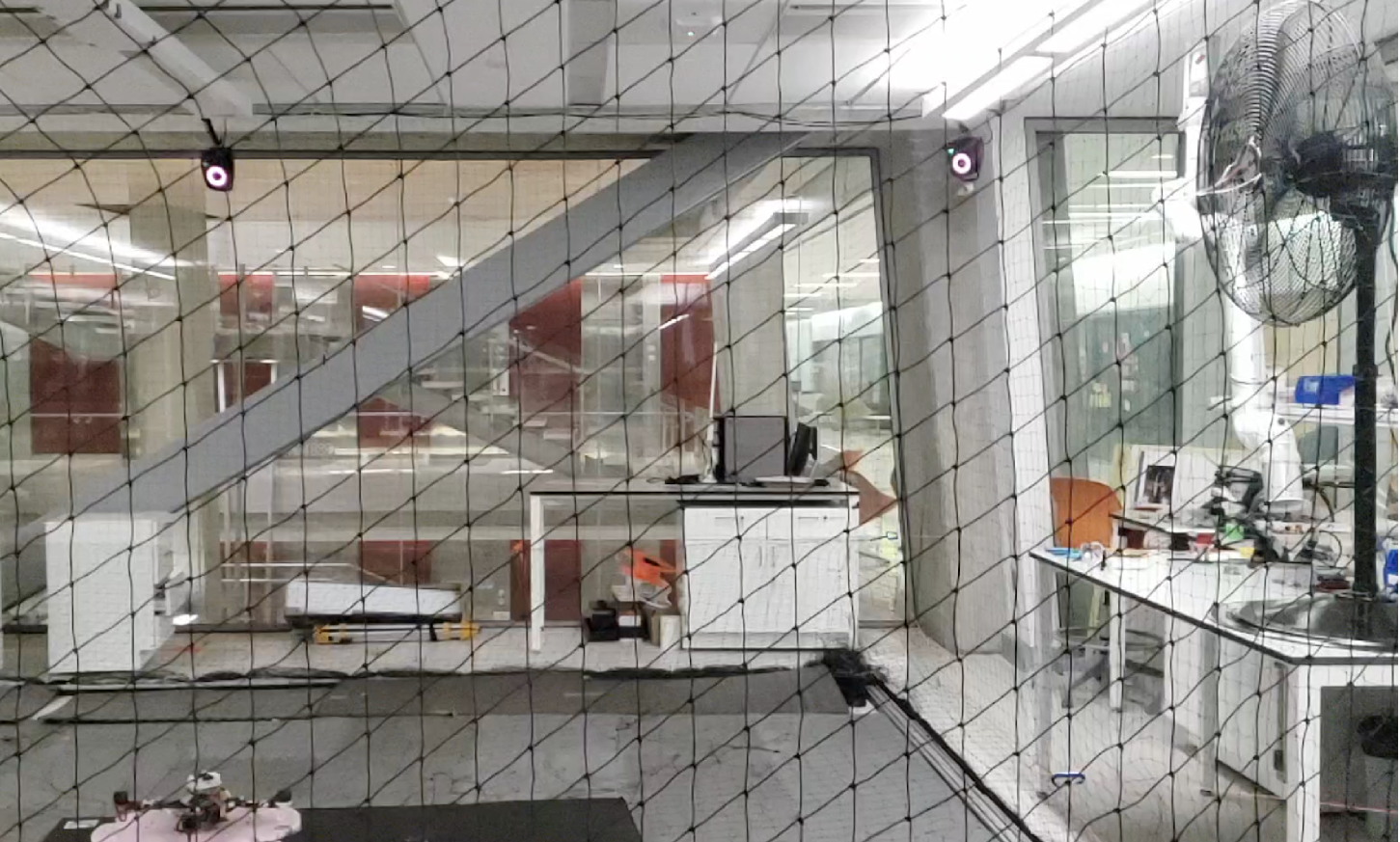}}
	}
	\centerline{
		\subfigure[At $t=2.02\si{\second}$]{\includegraphics[width=0.7\columnwidth]{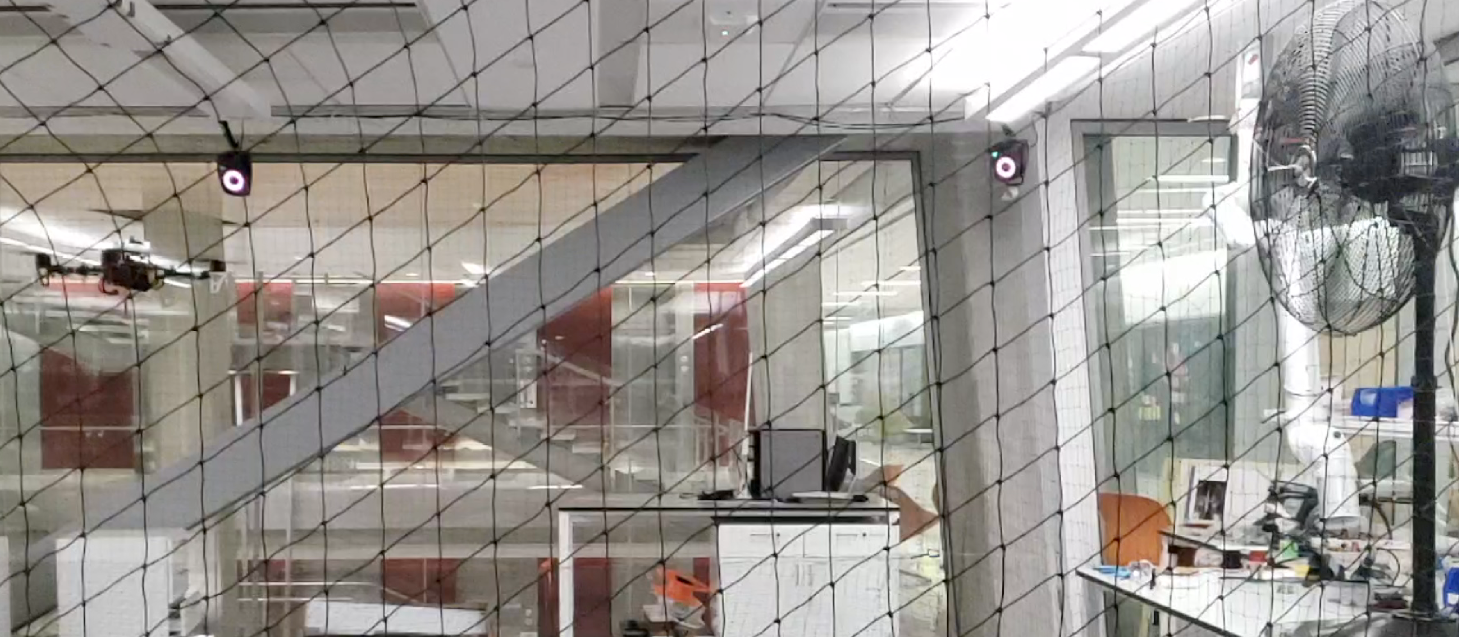}}
	}
	\centerline{
		\subfigure[At $t=2.48\si{\second}$]{\includegraphics[width=0.7\columnwidth]{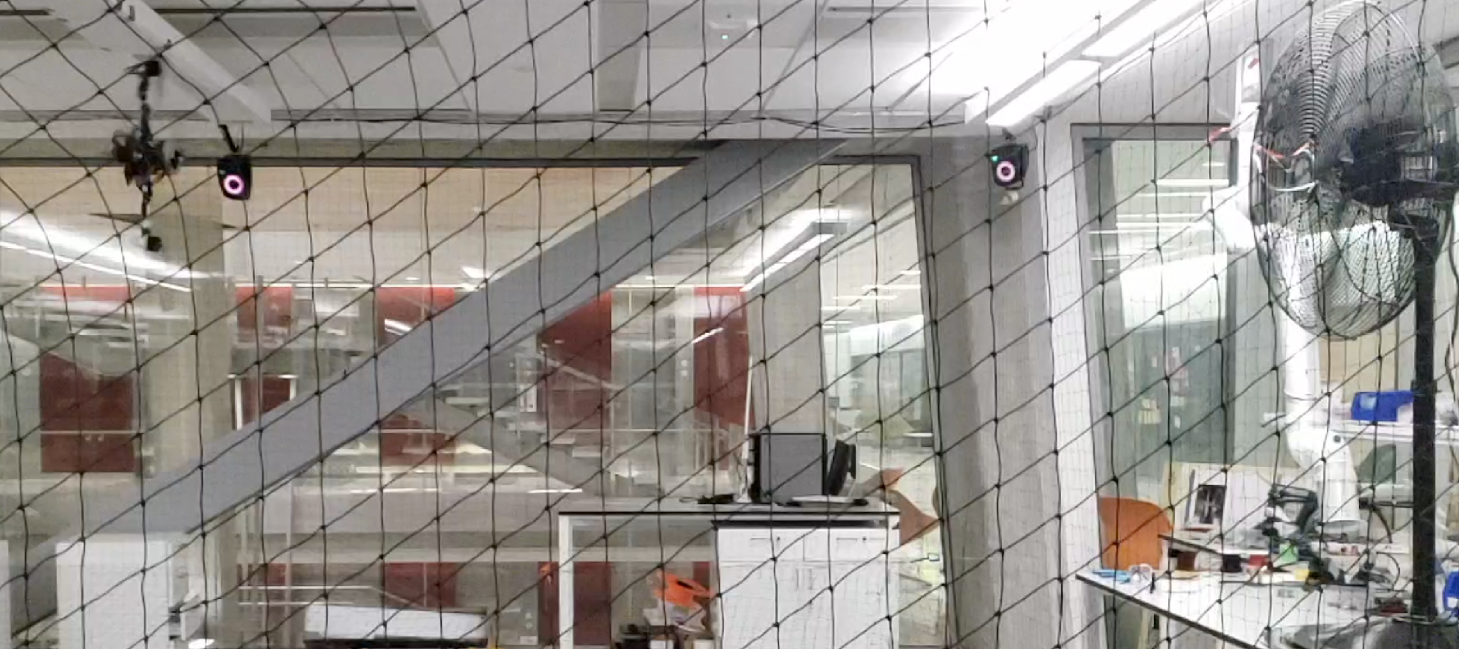}}
	}
	\centerline{
		\subfigure[At $t=2.55\si{\second}$]{\includegraphics[width=0.7\columnwidth]{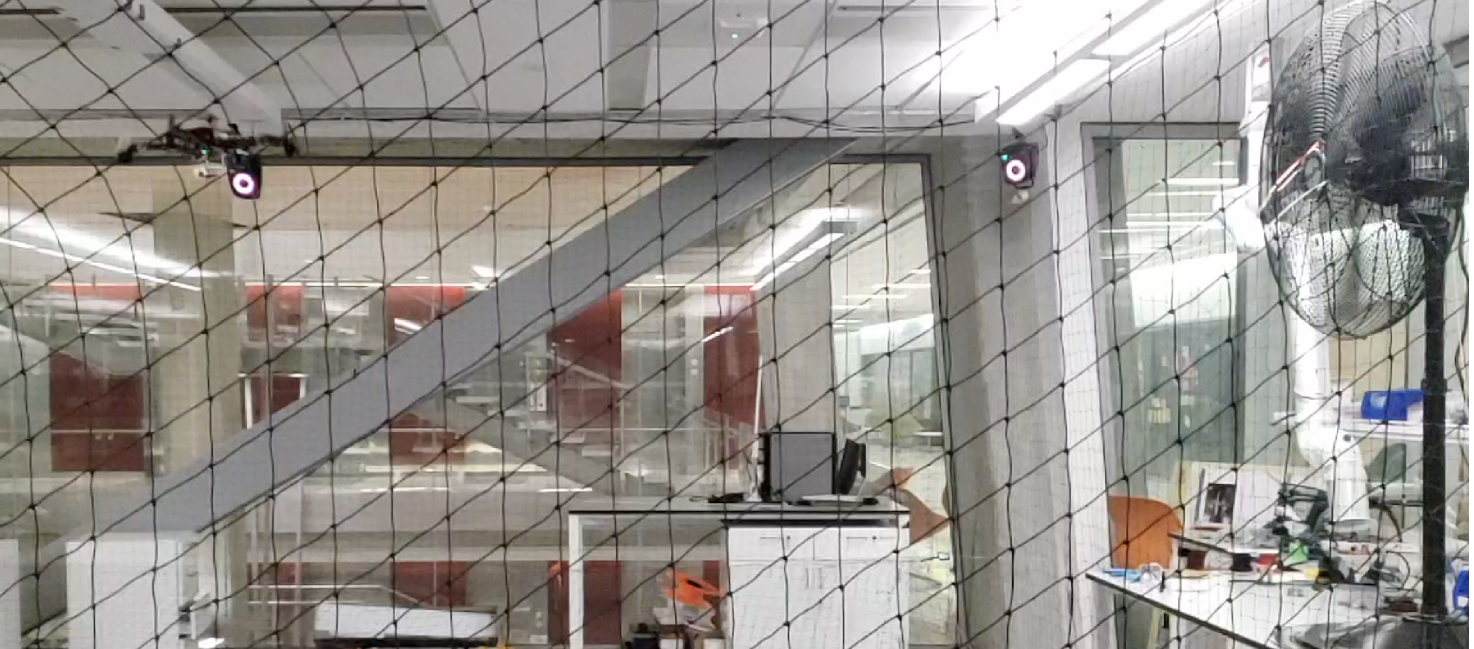}}
	}
	\centerline{
		\subfigure[At $t=2.60\si{\second}$]{\includegraphics[width=0.7\columnwidth]{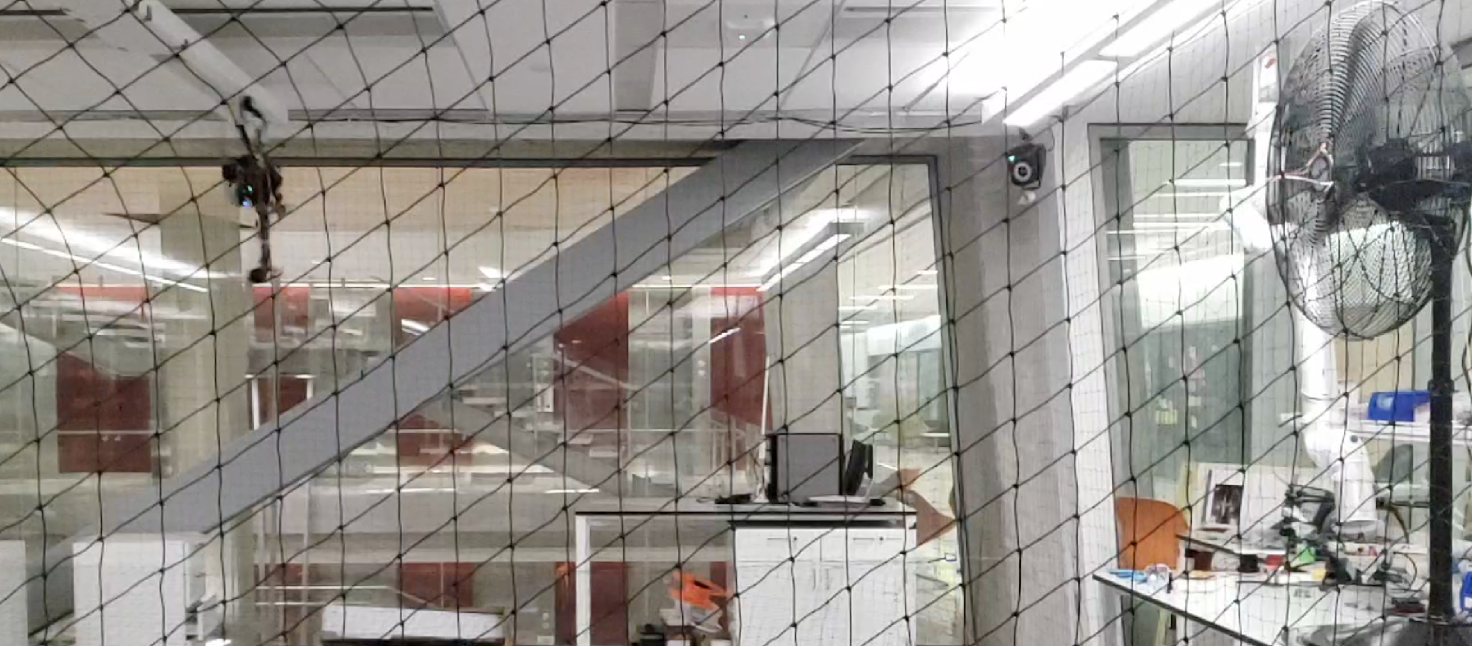}}
	}
	\centerline{
		\subfigure[At $t=2.74\si{\second}$]{\includegraphics[width=0.7\columnwidth]{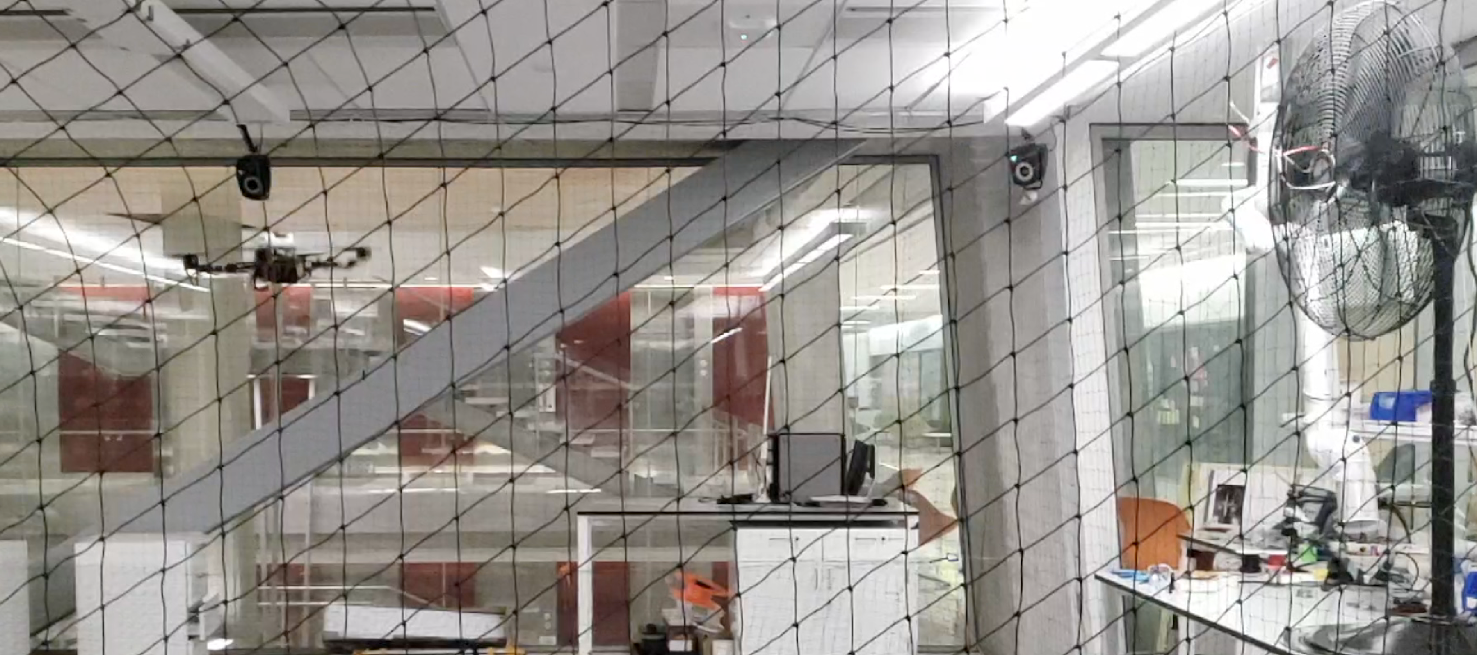}}
	}
    \caption{Backflip with the adaptive controller (snapshots)}
	\label{fig:photo_flip}
\end{figure}

\appendix

Here we present the proof of Proposition~\ref{prop:NN_prop}.
First, in Section \ref{Sec:identities}, selected identities that are used throughout the proof are presented.
Then in Section \ref{Sec:position}, we analyze the error dynamics for the position tracking command, which will be integrated with the attitude error dynamics presented in \ref{Sec:attitude}.
Finally, in Section~\ref{Sec:quadrotor}, we consider the stability of the complete dynamics.

\subsection{Identities} \label{Sec:identities}

For any $\mathcal{A}\in \Ree^{3\times 3},x, y\in\Ree^{3}$, $c_1, c_2,c_3\in \Ree$, 
\begin{gather}
\tr{ y x^T}=x^Ty,\label{eqn:tryx}\\
||x+y||\leq||x||+||y||,\label{eqn:x_y}\\
-c_1x^2+c_2x=-\frac{c_1}{2}x^2-\frac{c_1}{2}[x-\frac{c_2}{c_1}]^2+\frac{c_2^2}{2c_1}\nonumber\\
\leq-\frac{c_1}{2}x^2+\frac{c_2^2}{2c_1},\label{eqn:ax2+bx}\\
-c_1x^2-c_2xy-c_3y^2\leq-c_1x^2+c_2xy-c_3y^2.\label{eqn:quad}\\
\mathcal{A}^T\hat{x}+\hat{x} \mathcal{A}=([\tr{\mathcal{A}} I_{3 \times 3}-\mathcal{A}] x)^{\wedge}.\label{eqn:Axtrace}
\end{gather}

Let $\mathcal{V}_{0_i}$ be the part of the Lyapunov function dependent of $\tilde W_i, \tilde V_i$ defined as 
\begin{gather}
\mathcal{V}_{0_i}=\frac{1}{2\gamma_{w_i}}\tr{\tilde W_i^T \tilde W_i}+\frac{1}{2\gamma_{v_i}}\tr{\tilde V_i^T \tilde V_i},\label{eqn:Lyap0}.
\end{gather}
We find the upper bound of the following expression, defined as $\mathcal{B}_i\in\Re$,
\begin{align}
\mathcal{B}_i = -a_i^T(\tilde\Delta_i)+\dot{\mathcal{V}}_{0_i}.\label{eqn:B}
\end{align}
The error dynamics of the neural network weights from~\refeqn{errorNN} are give by
\begin{gather}
\dot{\tilde{W}}_i=-\dot{\bar{W}}_i,\quad
\dot{\tilde{V}}_i=-\dot{\bar{V}}_i.\label{eqn:tildeWV}
\end{gather}
We substitute~\refeqn{Wdot}--\refeqn{Vdot} into~\refeqn{tildeWV}. Using~\refeqn{OENN}, $\mathcal{B}_i$ is rewritten as
\begin{align}
\mathcal{B}_i&=a_i^T\{-\tilde W_i^T [\sigma(z_i)-\sigma^\prime(z_i)z_i]-\bar W_i^T \sigma^\prime(z_i)\tilde{z}_i+w_i\}\nonumber \\
&\quad+\tr{\tilde W_i^T[\sigma(z_i)a_i^T-\sigma^\prime(z_i)z_ia_i^T+\kappa_i \bar W_i]} \nonumber \\
&\quad+\tr{\tilde V_i^T\{x_{nn_i}[\sigma^\prime(z_i)^T \bar W_i a_i]^T+\kappa_i \bar V_i \}}.
\end{align}
Applying~\refeqn{tryx}, it reduces to
\begin{gather}
\mathcal{B}_i=\kappa_i \tr{\tilde Z_i^T \bar Z_i}+a_i^T(w_i).\label{eqn:Bi_init}
\end{gather}
We have
\begin{gather}
\tr{\tilde Z_i^T \bar Z_i}=\tr{\tilde Z_i^T Z_i}-\tr{\tilde Z_i^T \tilde Z_i}\leq||\tilde Z_i|| Z_{M_i}-||\tilde Z_i||^2.
\end{gather}
The inequality \refeqn{ax2+bx} implies
\begin{gather}
- ||\tilde Z_i||^2+Z_{M_i} ||\tilde Z_i||\leq-\frac{1}{2}||\tilde Z_i||^2+\frac{ Z_{M_i}^2}{2}.  \label{eqn:traceZtilde}
\end{gather}
Since $\norm{\sigma}\leq 1,\, \norm{\sigma^\prime}\leq 0.25$, it can be shown that the upper bound for~\refeqn{O} is
\begin{gather}
\norm{\mathcal{O}_i} \leq 2+0.25\norm{\tilde V_i} \norm{x_{nn_i}}.
\end{gather}
From~\refeqn{NNbound}, the upper bound of~\refeqn{wi} is
\begin{align}
\norm{w_i}\leq&
0.25 V_{M_i} \norm{\tilde W_i} \norm{x_{nn_i}}+W_{M_i} \norm{\mathcal{O}_i}+\epsilon_i.\label{eqn:normw}
\end{align}
Since $\norm{x_{nn_i}}\leq 1+\norm{x_{1_i}}+\norm{x_{2_i}}$, $\norm{\tilde Z_i}\geq\norm{\tilde W_i},\,\norm{\tilde Z_i}\geq\norm{\tilde V_i}$,~\refeqn{NNbound}, we obtain
\begin{gather}
\norm{w_i}\leq C_{1_i}+C_{2_i}||\tilde Z_i|| (1+\norm{x_{1_i}}+\norm{x_{2_i}}),
\end{gather}
where $C_{2_i}\geq 0.25 (V_{M_i}+W_{M_i}),\,C_{1_i}\geq 2W_{M_i}+\epsilon_i$.

Substituting~\refeqn{traceZtilde} and~\refeqn{normw} into~\refeqn{Bi_init}, 
\begin{align}
\mathcal{B}_i\leq&-\frac{\kappa_i}{2}||\tilde Z_i||^2+\frac{\kappa_i Z_{M_i}^2}{2}\nonumber\\&+\norm{a_i}\{C_{1_i}+C_{2_i}||\tilde Z_i|| (1+\norm{x_{1_i}}+\norm{x_{2_i}})\}.\label{eqn:last_term}
\end{align}
\subsection{Position Error Dynamics} \label{Sec:position}

Taking the derivative of~\refeqn{ev} and substituting~\refeqn{E3} and~\refeqn{EC2}, the error dynamics are defined as
\begin{gather}
\dot{e}_x=e_v,\\
m\dot{e_v}=mge_3-\Delta_1-fRe_3-m\ddot{x}_d.\label{eqn:mevdot1}
\end{gather}
Define $\mathcal{X}\in\Ree^3$ as
\begin{gather}
\mathcal{X}\equiv\frac{f}{e_3^TR_c^TRe_3}[(e_3^TR_c^TRe_3)Re_3-R_ce_3],
\end{gather}
where $e_3^TR_c^TRe_3>0$~\cite{LeeLeoPICDC10}. Equation \refeqn{mevdot1} is rewritten as
\begin{gather}
m\dot e_v=mge_3-\Delta_1-m\ddot x_d-\frac{f}{e_3^TR_c^TRe_3}R_ce_3-\mathcal{X}.\label{eqn:mevdot2}
\end{gather}
Since $b_{3c}=R_ce_3=\frac{-A}{\norm{A}}$, $f=-A^TRe_3$, we can conclude that $f=(\norm{A}R_ce_3)^TRe_3$, therefore
\begin{gather}
-\frac{f}{e_3^TR_c^TRe_3}R_ce_3=A.\label{eqn:newA}
\end{gather}
Substituting~\refeqn{newA},~\refeqn{A} into~\refeqn{mevdot2}, the velocity error dynamics is written as
\begin{gather}
m\dot e_v=-k_x e_x-k_ve_v-\tilde \Delta_1-\mathcal{X}.\label{eqn:mevdot}
\end{gather}

Next, we find the upper bound of $\mathcal{X}$. 
From~\refeqn{newA}, $\norm{A}=\norm{-\frac{f}{e_3^TR_c^TRe_3}R_ce_3}$. 
Since $R_ce_3$ is a unit vector, $\norm{A}=\norm{-\frac{f}{e_3^TR_c^TRe_3}}$. 
Consequently, the norm of $\mathcal{X}$ can be written as
\begin{gather}
\norm{\mathcal{X}}=\norm{A}\norm{[(e_3^TR_c^TRe_3)Re_3-R_ce_3}.
\end{gather}
Also, it is shown that $\norm{[(e_3^TR_c^TRe_3)Re_3-R_ce_3}\leq\norm{e_R}\leq\beta<1$, where $\beta=\sqrt{\psi_1(2-\psi_1)}$~\cite{LeeLeoPICDC10}. Substituting~\refeqn{A} and~\refeqn{acceleration_bound}, the upper bound of $\norm{\mathcal{X}}$ is given by
\begin{gather}
\norm{\mathcal{X}}\leq(k_x\norm{e_x}+k_v\norm{e_v}+B_1)\norm{e_R}.\label{eqn:normX}
\end{gather}

For a non-negative constant $c_1$, the Lyapunov function for the position dynamics is chosen as
\begin{gather}
\mathcal{V}_1=\frac{1}{2}k_x e_x^T e_x+\frac{1}{2}m e_v^T e_v+m c_1 e_x^T e_v+\mathcal{V}_{0_1},\label{eqn:V1}
\end{gather}
where $\mathcal{V}_{0_1}$ is given by~\refeqn{Lyap0}. It is straightforward to show
\begin{gather}
\lambda_m(\mathcal{M}_{11}) ||\mathcal{Z}_{11}||^2+\mathcal{V}_{0_1}\leq\mathcal{V}_1 \leq \lambda_M(\mathcal{M}_{12}) ||\mathcal{Z}_{11}||^2+\mathcal{V}_{0_1}, \label{eqn:V1bound}
\end{gather}
where
\begin{gather}
\mathcal{M}_{11}=\frac{1}{2}\begin{bmatrix} k_x & -mc_1
\\ -mc_1 & m \end{bmatrix},\quad\mathcal{M}_{12}=\frac{1}{2}\begin{bmatrix} k_x & mc_1
\\ mc_1 & m \end{bmatrix},\label{eqn:M1}\\
\mathcal{Z}_{11}=[||e_x||,||e_v||]^T.\label{eqn:Z11}
\end{gather}
If $c_1$ is sufficiently small such that
\begin{gather}
c_1<\sqrt{\frac{k_x}{m}},\label{eqn:c10}
\end{gather}
then $\mathcal{M}_{11},\mathcal{M}_{12}$ are positive-definite.

Taking the derivative of the $\Lya$ function, 
\begin{align}
\dot{\mathcal{V}}_1=&k_x e_v^Te_x+(e_v+c_1e_x)^Tm\dot e_v+mc_1e_v^Te_v+\dot{\mathcal{V}}_{0_1}.\label{eqn:vdot_mid1}
\end{align}
Substituting~\refeqn{mevdot} into~\refeqn{vdot_mid1} and rearranging,
\begin{align}
\dot{\mathcal{V}}_1=&(mc_1-k_v)e_v^Te_v-c_1 k_x e_x^T e_x-c_1k_v e_x^T e_v\nonumber \\
&-(e_v+c_1e_x)^T\mathcal{X}-(e_v+c_1e_x)^T\tilde \Delta_1+\dot{\mathcal{V}}_{0_1}.
\end{align}

From~\refeqn{a1a2}, the last two terms of the above expression are the same as~\refeqn{B}. 
Substituting its equivalent expression given by~\refeqn{last_term}, and substituting~\refeqn{normX},
\begin{align}
\dot{\mathcal{V}}_1\leq&-(k_v(1-\beta)-mc_1)e_v^Te_v-c_1k_x(1-\beta)e_x^Te_x\nonumber \\&+c_1k_v(1+\beta)\norm{e_x}\norm{e_v}-\frac{\kappa_1}{2}||\tilde Z_1||^2+\frac{\kappa_1 Z_{M_1}^2}{2}\nonumber\\
&+\norm{a_1}\{C_{1_1}+C_{2_1}||\tilde Z_1|| (1+\norm{x_{1_1}}+\norm{x_{2_1}})\}\nonumber \\&+\norm{e_R}\{B_1(c_1\norm{e_x}+\norm{e_v})+k_xe_{x_{max}}\norm{e_v}\},\label{eqn:Vdot1_mid0}
\end{align}
where $\norm{e_x}\leq e_{x_{\max}}$ is used for simplifying multiplication of the three vectors, $\norm{e_R}\norm{e_x}\norm{e_v}$, for a fixed positive constant $e_{x_{\max}}$.

It is assumed that the desired trajectory is bounded such that $\norm{x_d}\leq x_{d_{max}},\,\norm{\dot x_d}\leq v_{d_{max}}$, where $x_{d_{max}},\, v_{d_{max}}>0$. 
From~\refeqn{ev} and $x_{1_1}=x$, $x_{2_1}=v$, $\norm{x_1}\leq\norm{e_x}+x_{d_{max}}$, $\norm{x_2}\leq\norm{e_v}+v_{d_{max}}$. 
Substituting these into~\refeqn{Vdot1_mid0}, expanding $a_1$, and using~\refeqn{x_y}, we obtain
\begin{align}
\dot{\mathcal{V}}_1\leq&-(k_v(1-\beta)-mc_1)e_v^Te_v-c_1k_x(1-\beta)e_x^Te_x\nonumber \\&+c_1k_v(1+\beta)\norm{e_x}\norm{e_v}-\frac{\kappa_1}{2}||\tilde Z_1||^2+\frac{\kappa_1 Z_{M_1}^2}{2}\nonumber\\
&+C_{1_1}\norm{e_v} +C_{3_1} \norm{e_v}^2+C_{4_1}\norm{e_v}\norm{\tilde Z_1}\nonumber\\
&+c_1(C_{1_1}\norm{e_x}+C_{3_1}\norm{e_x}^2+C_{4_1}\norm{e_x}\norm{\tilde Z_1})\nonumber\\&+(1+c_1)C_{3_1}\norm{e_x}\norm{e_v}\nonumber \\&+\norm{e_R}\{B_1(c_1\norm{e_x}+\norm{e_v})+k_xe_{x_{max}}\norm{e_v}\},\label{eqn:Vdot1_mid1}
\end{align}
where $C_{1_1}\geq 2W_{M_1}+\epsilon,\,C_{2_1}\geq 0.25 (V_{M_1}+W_{M_1}),\,C_{3_1}\geq C_{2_1} Z_{M_1},\,C_{4_1}\geq C_{2_1}(1+x_{d_{max}}+v_{d_{max}})$. 

Using~\refeqn{ax2+bx}, and defining $k_{v_{\beta}}\equiv k_v(1-\beta)-mc_1-C_{3_1}$ and $k_{x_\beta}\equiv k_x(1-\beta)-C_{3_1}$, the following expressions are rearranged as
\begin{align}
-&k_{x_\beta}e_x^Te_x+C_{1_1}e_x\leq-\frac{k_{x_\beta}}{2}e_x^Te_x+\frac{C_{1_1}^2}{2k_{x_\beta}},\label{eqn:one1}\\
-&k_{v_{\beta}}e_v^Te_v+C_{1_1}e_v\leq-\frac{k_{v_{\beta}}}{2}e_v^Te_v+\frac{C_{1_1}^2}{2k_{v_{\beta}}}.\label{eqn:one2}
\end{align}
Substituting~\refeqn{one1}--\refeqn{one2} into~\refeqn{Vdot1_mid1} results in 
\begin{align}
\dot{\mathcal{V}}_1\leq&-\frac{c_1k_{x_\beta}}{2}e_x^Te_x-\frac{k_{v_\beta}}{2}e_v^Te_v-\frac{\kappa_1}{2}||\tilde Z_1||^2+k_{xv}\norm{e_x}\norm{e_v}\nonumber \\
&+C_{4_1}||e_v||||\tilde Z_1||+c_1C_{4_1}||e_x||||\tilde Z_1||+C_{5_1}\nonumber\\
&+\norm{e_R}\{c_1B_1\norm{e_x}+(B_1+k_xe_{x_{max}})\norm{e_v}\},\label{eqn:V1dot}
\end{align}
where $k_{xv}=c_1[(1+\beta)k_v+C_{3_1}]+C_{3_1}$, 
$C_{5_1}=\frac{c_1C_{1_1}^2}{2k_{x_\beta}}+\frac{C_{1_1}^2}{2k_{v_\beta}}+\frac{\kappa_1 Z_{M_1}^2}{2}$.

\subsection{Attitude Error Dynamics} \label{Sec:attitude}
Here, we analyze the error dynamics for the attitude tracking command. 
Let the attitude error function be
\begin{align}
    \Psi(R,R_c)=\frac{1}{2}\tr{I_{3\times 3}-R_c^TR}.\label{eqn:Psi}
\end{align}
Taking the derivative of \refeqn{eomega} and \refeqn{Psi}, and using \refeqn{Axtrace}, \refeqn{E4}, and \refeqn{Mc}, the attitude error dynamics are given by
\begin{gather}
\dot{e}_R=\frac{1}{2}(\tr{R^T R_c}I_{3\times 3}-R^TR_c)e_\Omega \equiv C(R_c^TR)e_\Omega,\label{eqn:eRdot}\\
J\dot{e_\Omega}=-k_Re_R-k_\Omega e_\Omega-\tilde\Delta_2,\label{eqn:eomegadot}\\
\dot \Psi(R,R_c)= e_R^Te_\Omega,\label{eqn:Psidot}\\
||C(R_c^TR)||\leq1.\label{eqn:CRCTR}
\end{gather}
For more details about proof of \refeqn{eRdot} and~\refeqn{Psidot}--\refeqn{CRCTR}, see~\cite{LeeLeoPICDC10}.

For a non-negative constant $c_2$, the Lyapunov function for the attitude dynamics is defined as
\begin{gather}
\mathcal{V}_2=\frac{1}{2} e_\Omega^T J e_\Omega+k_R \Psi(R,R_c)+c_2 e_R^T Je_\Omega+\mathcal{V}_{0_2},\label{eqn:V2}
\end{gather}
where $\mathcal{V}_{0_2}$ is given by~\refeqn{Lyap0}, and
\begin{gather}
\frac{1}{2}\norm{e_R}^2\leq\Psi(R,R_c)\leq\frac{1}{2-\psi_1}\norm{e_R}^2,
\end{gather}
with $\psi_1=\frac{1}{k_R}[\frac{1}{2}e_\Omega(0)^TJe_\Omega(0)+k_R\Psi(R(0),R_c(0))]$. The bounds of $\mathcal{V}_2$ are
\begin{gather}
\lambda_m(\mathcal{M}_{21}) ||\mathcal{Z}_{21}||^2+\mathcal{V}_{0_2}\leq\mathcal{V}_2 \leq \lambda_M(\mathcal{M}_{22}) ||\mathcal{Z}_{21}||^2+\mathcal{V}_{0_2}, \label{eqn:V2bound}
\end{gather}
where \begin{gather}
\mathcal{M}_{21}=\frac{1}{2}\begin{bmatrix} k_R & -c_2 \lambda_{M_J}
\\ -c_2 \lambda_{M_J} & \lambda_{m_J} \end{bmatrix},
\mathcal{M}_{22}=\frac{1}{2}\begin{bmatrix} \frac{2k_R}{2-\psi_1} & c_2\lambda_{M_J}
\\ c_2\lambda_{M_J} & \lambda_{M_J}\end{bmatrix},\label{eqn:M2}\\
\mathcal{Z}_{21}=[||e_R||,||e_\Omega||]^T,\label{eqn:Z21}
\end{gather}
with $\lambda_{m_J}=\lambda_m(J),\lambda_{M_J}=\lambda_M(J)$. Provided that $c_2$ is sufficiently small to satisfy the following inequality, the matrices $\mathcal{M}_{21},\mathcal{M}_{22}$ are positive-definite,
\begin{gather}
c_2<\min\{\frac{\sqrt{k_R\lambda_{m_J}}}{\lambda_{M_J}},  \sqrt{\frac{2k_R}{\lambda_M(2 - \psi_1)}}\},\label{eqn:c20}
\end{gather}
where $\psi_1 < 2$.

The time-derivative of the $\Lya$ function is given by
\begin{align}
\dot{\mathcal{V}}_2=& (e_\Omega+c_2e_R)^T J \dot e_\Omega+k_R \dot \Psi(R,R_c)+c_2 \dot e_R^T Je_\Omega\nonumber\\&+\dot{\mathcal{V}}_{0_2}.
\end{align}
Substituting error dynamics~\refeqn{eRdot}--\refeqn{CRCTR},~\refeqn{EC4}, and~\refeqn{Mc},
\begin{align}
\dot{\mathcal{V}}_2&= (e_\Omega+c_2e_R)^T (-k_Re_R-k_\Omega e_\Omega-\tilde \Delta_2)\nonumber \\
&\quad +k_R e_R^T e_\Omega+c_2 C(R_c^TR)e_\Omega^T Je_\Omega+\dot{\mathcal{V}}_{0_2}.
\end{align}
From~\refeqn{quad},~\refeqn{CRCTR}, and~$\norm{J}\leq\lambda_{M_J}$,
\begin{align}
\dot{\mathcal{V}}_2\leq&-c_2k_R e_R^Te_R+c_2 k_\Omega ||e_R||||e_\Omega||-(k_\Omega-c_2\lambda_{M_J})e_\Omega^Te_\Omega\nonumber \\
&-(e_\Omega+c_2e_R)^T(\tilde \Delta_2)+\dot{\mathcal{V}}_{0_2}.\label{eqn:vdot_mid1_1}
\end{align}

From~\refeqn{a1a2}, the last two terms of this expression are identical to~\refeqn{B}. 
Substituting its equivalent expression given by~\refeqn{last_term},
\begin{align}
\dot{\mathcal{V}}_2\leq&-c_2k_R e_R^Te_R+c_2 k_\Omega ||e_R||||e_\Omega||-(k_\Omega-c_2\lambda_{M_J})e_\Omega^Te_\Omega\nonumber \\
&-\frac{\kappa_2}{2}||\tilde Z_2||^2+\frac{\kappa_2 Z_{M_2}^2}{2}\nonumber\\&+\norm{a_2}\{C_{1_2}+C_{2_2}||\tilde Z_2|| (1+\norm{E(R)}+\norm{\Omega})\}.\label{eqn:vdot2_mid0}
\end{align}

It is assumed that $\norm{\dot{\bar{\Delta}}_1}\leq\delta_2$ and the desired trajectory is designed such that $\norm{\dddot{x}_d}\leq\delta_3$, where $\delta_2,\,\delta_3>0$. Thus $\norm{m\dddot{x}_d+\dot{\bar{\Delta}}_1}\leq B_2$. 
Taking the derivative of~\refeqn{A}, it can be shown that
\begin{gather}
\norm{\dot{A}}\leq k_x\norm{e_v}+k_v \norm{\dot{e}_v}+B_2.\label{eqn:Adot_bound}
\end{gather}
From~\refeqn{Rc}, $\dot R_c=[\dot b_{1c},\dot b_{2c},\dot b_{3c}]$.
Let $C=-b_{3_c}\times b_{1_d}$. 
We have 
\begin{gather}\dot b_{1c}=\dot b_{2c}\times b_{3c}+b_{2c}\times \dot b_{3c},\label{eqn:b1cdot}\\
\dot b_{2c}=-\frac{\dot C}{||C||}+\frac{C(C\cdot C)}{||C||^3},\label{eqn:b2cdot}\\
\dot b_{3c}=-\frac{\dot A}{||A||}+\frac{A(A\cdot A)}{||A||^3}.\label{eqn:b3cdot}
\end{gather}
From~\refeqn{acceleration_bound},~\refeqn{Adot_bound}, and~\refeqn{b3cdot},
\begin{gather}
\norm{\dot b_{3c}}\leq 2 \frac{k_x\norm{e_v}+k_v \norm{\dot{e}_v}+B_2}{k_x\norm{e_x}+k_v \norm{e_v}+B_1}\equiv B_3.\label{eqn:ndotb_3c}
\end{gather}
It is assumed that the desired trajectory is designed such that $\norm{\dot{b}_{1_d}}\leq \delta_4$, where $\delta_4>0$. 
Using~\refeqn{ndotb_3c}, it can be shown that $\norm{\dot C}\leq B_3+\delta_4$. 
From~\refeqn{b2cdot}, as $\norm{C}\leq 1$,
\begin{gather}
\norm{\dot b_{2c}}\leq 2(B_3+\delta_4),\label{eqn:ndotb_2c}
\end{gather}
From~\refeqn{b1cdot},~\refeqn{ndotb_3c}--\refeqn{ndotb_2c}
\begin{gather}
\norm{\dot b_{1c}}\leq 3B_3+2\delta_4.\label{eqn:ndotb_1c}
\end{gather}
Thus, from~\refeqn{ndotb_3c}--\refeqn{ndotb_1c}, 
it can be shown that $\norm{\dot{R}_c}\leq B_4$, for a positive $B_4$.
From~\refeqn{EC3}, $\norm{\Omega_c}\leq B_4$. Since~\refeqn{eomega}, $\norm{\Omega}\leq\norm{e_\Omega}+B_4$.

We have $\norm{E(R)}\leq E_{max}$ for a positive $ E_{max}$.
Substituting these into~\refeqn{vdot2_mid0}, and expanding $a_2$ with\refeqn{x_y},
\begin{align}
\dot{\mathcal{V}}_2\leq&-c_2k_R e_R^Te_R+c_2 k_\Omega ||e_R||||e_\Omega||-(k_\Omega-c_2\lambda_{M_J})e_\Omega^Te_\Omega\nonumber \\
&-\frac{\kappa_2}{2}||\tilde Z_2||^2+\frac{\kappa_2 Z_{M_2}^2}{2}\nonumber\\&+C_{1_2}\norm{e_\Omega} +C_{3_2} \norm{e_\Omega}^2+C_{4_2}\norm{e_\Omega}\norm{\tilde Z_2}\nonumber\\
&+c_2(C_{1_2}\norm{e_R}+C_{4_2}\norm{e_R}\norm{\tilde Z_2})\nonumber\\&+c_2C_{3_2}\norm{e_R}\norm{e_\Omega},\label{eqn:vdot2_mid1}
\end{align}
where $C_{1_2}\geq 2W_{M_2}+\epsilon_2,\,C_{2_2}\geq 0.25 (V_{M_2}+W_{M_2}),\,C_{3_2}\geq C_{2_2} Z_{M_2},\,C_{4_2}\geq C_{2_2}(1+E_{max}+B_4)$.
Using~\refeqn{ax2+bx}, the following expressions are rearranged into
\begin{align}
-&k_{R} e_R^Te_R+C_{1_2}||e_R||\leq-\frac{k_{R}}{2}e_R^Te_R+\frac{C_{1_2}^2}{2k_{R}},\label{eqn:two1}\\
-&k_{\Omega_\beta}e_\Omega^Te_\Omega+C_{1_2}||e_\Omega||\leq-\frac{k_{\Omega_\beta}}{2}e_\Omega^Te_\Omega+\frac{C_{1_2}^2}{2k_{\Omega_\beta}},\label{eqn:two2}
\end{align}
where $k_{\Omega_\beta}=k_\Omega-c_2\lambda_{M_J}-C_{3_2}$.
Then substituting~\refeqn{two1}--\refeqn{two2} in~\refeqn{vdot2_mid1}
\begin{align}
\dot{\mathcal{V}}_2\leq&-\frac{c_2k_{R}}{2}e_R^Te_R-\frac{k_{\Omega_\beta}}{2}e_\Omega^Te_\Omega-\frac{\kappa_2}{2}||\tilde Z_2||^2\nonumber \\
&+k_{R\Omega} ||e_R||||e_\Omega||\nonumber\\
&+C_{4_2}||e_\Omega||||\tilde Z_2||+c_2C_{4_2}||e_R||||\tilde Z_2||+C_{5_2}.\label{eqn:V2dot0}
\end{align}
where $k_{R\Omega}=c_2(\kappa_\Omega+C_{3_2}),\, C_{5_2}=\frac{c_2C_{2_1}^2}{2k_{R}}+\frac{C_{2_1}^2}{2k_{\Omega_\beta}}+\frac{\kappa_2 Z_{M_2}^2}{2}$. 


\subsection{Stability Proof for Quadrotor Dynamics} \label{Sec:quadrotor}

Here, we combine the position error dynamics and the attitude error dynamics to show the stability properties of the complete controlled quadrotor. 
The Lyapunov function is chosen as $\mathcal{V}=\mathcal{V}_1+\mathcal{V}_2$, where $\mathcal{V}_1,\mathcal{V}_2$ are given by~\refeqn{V1},~\refeqn{V2}. From~\refeqn{V1bound} and~\refeqn{V2bound}, the bound on $\mathcal{V}$ is given by
\begin{align}
\lambda_m&(\mathcal{M}_{11}) ||\mathcal{Z}_{11}||^2+\lambda_m(\mathcal{M}_{21}) ||\mathcal{Z}_{21}||^2+\mathcal{V}_{0_1}+\mathcal{V}_{0_2}\leq \mathcal{V}\nonumber\\&\leq \lambda_M(\mathcal{M}_{12}) ||\mathcal{Z}_{11}||^2+\lambda_M(\mathcal{M}_{22}) ||\mathcal{Z}_{21}||^2+\mathcal{V}_{0_1}+\mathcal{V}_{0_2}.
\end{align}
The upper bound can be rewritten as
\begin{align}
\mathcal{V}\leq
&\frac{1}{2} \mathcal{Z}_{1}^T \mathcal{N}_{1}^\prime \mathcal{Z}_{1}+\frac{1}{2} \mathcal{Z}_{2}^T \mathcal{N}_{2}^\prime \mathcal{Z}_{2}+\frac{1}{2}\mathcal{Z}_{3}^T\mathcal{N}_{3}^\prime\mathcal{Z}_{3},
\end{align}
where
\begin{gather*}
\mathcal{N}_{1}^\prime=\begin{bmatrix}\frac{k_x}{2}&mc_1&0\\mc_1&\frac{m}{2}&0\\0&0&\frac{1}{\min\{\gamma_{w_1},\gamma_{v_1}\}}\end{bmatrix},\nonumber\\
\mathcal{N}_{2}^\prime=\begin{bmatrix}\frac{k_R}{2-\psi_2}&c_2\lambda_{M_J}&0\\c_2\lambda_{M_J}&\lambda_{M_J}&0\\0&0&\frac{1}{\min\{\gamma_{w_2},\gamma_{v_2}\}}\end{bmatrix},\nonumber\\
\mathcal{N}_{3}^\prime=\begin{bmatrix}\frac{k_x}{2}&0&0\\0&\frac{m}{2}&0\\0&0&\frac{1}{2-\psi_2}\end{bmatrix},\label{eqn:N1prime}\\
\mathcal{Z}_{1}=[||e_x||,\norm{e_v},||\tilde Z_1||]^T,\quad  \mathcal{Z}_{2}=[\norm{e_R},||e_\Omega||,||\tilde Z_2||]^T,\\
\mathcal{Z}_{3}=[\norm{e_x},||e_v||,||e_R||]^T.
\end{gather*}
As discussed above, the matrices $\mathcal{N}_1^\prime,\mathcal{N}_2^\prime,\mathcal{N}_3^\prime$ are positive-definite if $c_1,c_2$ are sufficiently small.

The derivative of the $\Lya$ function is $\dot{\mathcal{V}}=\dot{\mathcal{V}}_1+\dot{\mathcal{V}}_2$.
From~\refeqn{V1dot} and~\refeqn{V2dot0}, it can be written as
\begin{align}
\dot{\mathcal{V}}\leq
&-\frac{1}{2} \mathcal{Z}_{1}^T \mathcal{N}_{1} \mathcal{Z}_{1}-\frac{1}{2} \mathcal{Z}_{2}^T \mathcal{N}_{2} \mathcal{Z}_{2}-\frac{1}{2}\mathcal{Z}_{3}^T\mathcal{N}_{3}\mathcal{Z}_{3} +C_{5},
\end{align}
where $C_5=C_{5_1}+C_{5_2}$, and
\begin{gather}
\mathcal{N}_{1}=\begin{bmatrix}\frac{c_1 k_{x_\beta}}{2}&-\frac{k_{xv}}{2}&-c_1C_{4_1}\\-\frac{k_{xv}}{2}&\frac{k_{v_\beta}}{2}&-C_{4_1}\\-c_1C_{4_1}&-C_{4_1}&\kappa_1\end{bmatrix},\label{eqn:N1}\\
\mathcal{N}_{2}=\begin{bmatrix}\frac{c_2 k_{R}}{2}&-k_{R\Omega}&-c_2C_{4_2}\\-k_{R\Omega}&k_{\Omega_\beta}&-C_{4_2}\\-c_2C_{4_2}&-C_{4_2}&\kappa_2\end{bmatrix},\label{eqn:N2}\\
\mathcal{N}_{3}=\begin{bmatrix}\frac{c_1 k_{x_\beta}}{2}&-\frac{k_{xv}}{2}&-c_1B_1\\-\frac{k_{xv}}{2}&\frac{c_1 k_{v_\beta}}{2}&-B_1-k_xe_{x_{max}}\\-c_1B_1&-B_1-k_xe_{x_{max}}&\frac{c_2 k_{R}}{2}\end{bmatrix}.\label{eqn:N3}
\end{gather}

We can show that choosing sufficiently large $k_x$, $k_v$, $k_R$, $k_\Omega$, $\gamma_{w_i},\gamma_{v_i}$, $\kappa_i$, and sufficiently small $c_i$, for $i \in\{1,2\}$, the matrices $\mathcal{N}_{1} ,\,\mathcal{N}_{2} ,\,\mathcal{N}_{3}$ become positive definite.
Consequently, there exists $\mu>0$ such that
\begin{align}
\dot{\mathcal{V}}\leq-\nu \mathcal{V}+C_{5},\label{eqn:Vdot_final}
\end{align}
If $\mathcal{V}>\frac{C_{5}}{\nu}$, then $\dot{\mathcal{V}}<0$. Therefore, according to~\cite{khalil1996noninear}, $e_x,e_v,e_R,e_\Omega,\tilde Z_1$ and $\tilde Z_2$ are bounded and converge exponentially to the set $\mathcal{D}$
\begin{align}
\mathcal{D}&=\{e_x,e_v,e_R,e_\Omega \in \Ree^3,\tilde Z_1 \in \Ree^{N_{1_1}+N_{2_1}+2\times N_{2_1}+N_{3_1}},\nonumber\\& \tilde Z_2 \in \Ree^{N_{1_2}+N_{2_2}+2\times N_{2_2}+N_{3_2}}\arrowvert \norm{e_x}^2+\norm{e_v}^2+\norm{e_R}^2\nonumber\\
           & +\norm{e_\Omega}^2
+\frac{1}{\gamma_1}\|\tilde Z_1\|^2+\frac{1}{\gamma_2}\|\tilde Z_2\|^2\leq \frac{C_{5}}{\nu}\},\label{eqn:D}
\end{align}
where $\gamma_1=\max\{\gamma_{v_1},\gamma_{w_1}\},\,\gamma_2=\max\{\gamma_{v_2},\gamma_{w_2}\}$. 

\section*{Acknowledgment}
The authors would like to especially thank Mr. Kanishke Gamagedara for his contribution to the drone hardware platform which is used for experimental validation, and its maintenance.

\bibliography{total}

\begin{thebibliography}{10}
\providecommand{\url}[1]{#1}
\csname url@samestyle\endcsname
\providecommand{\newblock}{\relax}
\providecommand{\bibinfo}[2]{#2}
\providecommand{\BIBentrySTDinterwordspacing}{\spaceskip=0pt\relax}
\providecommand{\BIBentryALTinterwordstretchfactor}{4}
\providecommand{\BIBentryALTinterwordspacing}{\spaceskip=\fontdimen2\font plus
\BIBentryALTinterwordstretchfactor\fontdimen3\font minus
  \fontdimen4\font\relax}
\providecommand{\BIBforeignlanguage}[2]{{%
\expandafter\ifx\csname l@#1\endcsname\relax
\typeout{** WARNING: IEEEtran.bst: No hyphenation pattern has been}%
\typeout{** loaded for the language `#1'. Using the pattern for}%
\typeout{** the default language instead.}%
\else
\language=\csname l@#1\endcsname
\fi
#2}}
\providecommand{\BIBdecl}{\relax}
\BIBdecl

\bibitem{Gill2017}
R.~Gill and R.~D'Andrea, ``Propeller thrust and drag in forward flight,'' in
  \emph{2017 IEEE Conference on Control Technology and Applications (CCTA)},
  Aug 2017, pp. 73--79.

\bibitem{Craig_Paley_16}
\BIBentryALTinterwordspacing
W.~Craig, D.~Yeo, and D.~A. Paley, ``Dynamics of a rotor-pendulum with a small,
  stiff propeller in wind,'' in \emph{ASME 2016 Dynamic Systems and Control
  Conference}, vol.~1, no. DSCC2016-9774.\hskip 1em plus 0.5em minus
  0.4em\relax ASME, Oct. 2016, pp. V001T05A001--10. [Online]. Available:
  \url{http://dx.doi.org/10.1115/DSCC2016-9774}
\BIBentrySTDinterwordspacing

\bibitem{Bisheban_Lee_IJCAS_17}
\BIBentryALTinterwordspacing
M.~Bisheban and T.~Lee, ``Computational geometric system identification for the
  attitude dynamics on {SO(3)},'' \emph{International Journal of Control,
  Automation and Systems}, vol.~15, no.~6, pp. 2776--2785, Dec 2017. [Online].
  Available: \url{https://doi.org/10.1007/s12555-016-0714-2}
\BIBentrySTDinterwordspacing

\bibitem{Bisheban_Lee_CCTA_17}
------, ``Computational geometric identification for quadrotor dynamics in wind
  fields,'' in \emph{Proceedings of the IEEE Conference on Control Technology
  and Applications}, Kohala Coast, Hawai'i, USA, August 2017, pp. 1153--1158.

\bibitem{Tran15}
N.~K. Tran, E.~Bulka, and M.~Nahon, ``Quadrotor control in a wind field,'' in
  \emph{2015 International Conference on Unmanned Aircraft Systems (ICUAS)},
  June 2015, pp. 320--328.

\bibitem{Bangura_Mahony_17}
M.~Bangura and R.~Mahony, ``Thrust control for multirotor aerial vehicles,''
  \emph{IEEE Transactions on Robotics}, vol.~33, no.~99, pp. 1--16, April 2017.

\bibitem{craig2019geometric}
W.~S. Craig, D.~W. Yeo, and D.~A. Paley, ``Geometric control of a quadrotor in
  wind with flow sensing and thrust constraints: Attitude and position
  control,'' in \emph{AIAA Scitech 2019 Forum}, 2019, p. 1192.

\bibitem{Goodarzi_Lee_13}
F.~Goodarzi, D.~Lee, and T.~Lee, ``Geometric nonlinear pid control of a
  quadrotor uav on {SE(3)},'' in \emph{2013 European Control Conference (ECC)},
  July 2013, pp. 3845--3850.

\bibitem{GooLeeAJDSMC15}
------, ``Geometric adaptive tracking control of a quadrotor unmanned aerial
  vehicle on {SE(3)},'' \emph{ASME Journal of Dynamic Systems, Measurement, and
  Control}, vol. 137, no.~9, Sep. 2015.

\bibitem{Mellinger_Kumar_2014}
\BIBentryALTinterwordspacing
D.~Mellinger, N.~Michael, and V.~Kumar, ``Trajectory generation and control for
  precise aggressive maneuvers with quadrotors,'' \emph{The International
  Journal of Robotics Research}, vol.~31, no.~5, pp. 664--674, 2012. [Online].
  Available: \url{https://doi.org/10.1177/0278364911434236}
\BIBentrySTDinterwordspacing

\bibitem{Nicol_Ramirez-Serrano_08}
C.~Nicol, C.~J.~B. Macnab, and A.~Ramirez-Serrano, ``Robust neural network
  control of a quadrotor helicopter,'' in \emph{2008 Canadian Conference on
  Electrical and Computer Engineering}, May 2008, pp. 001\,233--001\,238.

\bibitem{LeeLeoPICDC10}
T.~Lee, M.~Leok, and N.~McClamroch, ``Geometric tracking control of a quadrotor
  aerial vehicle on {$\SE$},'' in \emph{Proceedings of the IEEE Conference on
  Decision and Control}, Atlanta, GA, Dec. 2010, pp. 5420--5425.

\bibitem{Bisheban_Lee_CDC18}
M.~Bisheban and T.~Lee, ``Geometric adaptive control for a quadrotor uav with
  wind disturbance rejection,'' in \emph{2018 IEEE Conference on Decision and
  Control (CDC)}, Dec 2018, pp. 2816--2821.

\bibitem{hornik1989multilayer}
K.~Hornik, M.~Stinchcombe, and H.~White, ``Multilayer feedforward networks are
  universal approximators,'' \emph{Neural networks}, vol.~2, no.~5, pp.
  359--366, 1989.

\bibitem{IoannouRobustadaptiveComtrol}
P.~A. Ioannou and J.~Sun, \emph{Robust Adaptive Control}.\hskip 1em plus 0.5em
  minus 0.4em\relax Upper Saddle River, NJ, USA: Prentice-Hall, Inc., 1995.

\bibitem{LeeKimJGCD01}
T.~Lee and Y.~Kim, ``Nonlinear adaptive flight control using backstepping and
  neural networks controller,'' \emph{Journal of Guidance, Control, and
  Dynamics}, vol.~24, no.~4, pp. 675--682, 2001.

\bibitem{LeeLeoAJC13}
T.~Lee, M.~Leok, and N.~McClamroch, ``Nonlinear robust tracking control of a
  quadrotor {UAV} on {$\SE$},'' \emph{Asian Journal of Control}, vol.~15,
  no.~2, pp. 391--408, Mar. 2013.

\bibitem{Padfield2007}
G.~D. Padfield, \emph{Helicopter flight dynamics: the theory and application of
  flying qualities and simulation modelling}.\hskip 1em plus 0.5em minus
  0.4em\relax Blackwill Publishing, 2007.

\bibitem{Hoffmann2011b}
G.~M. Hoffmann, H.~Huang, S.~L. Waslander, and C.~J. Tomlin, ``Precision flight
  control for a multi-vehicle quadrotor helicopter testbed,'' \emph{Control
  Engineering Practice}, vol.~19, no.~9, pp. 1023--1036, 2011.

\bibitem{Sydney2013}
N.~Sydney, B.~Smyth, and D.~A. Paley, ``Dynamic control of autonomous quadrotor
  flight in an estimated wind field,'' in \emph{Proceedings of the IEEE
  Conference on Decision and Control}, December 2013, pp. 3609--3616.

\bibitem{BisPhd18}
M.~Bisheban, ``Geometric estimation and control of quadrotor {UAVs} in wind
  fields,'' Ph.D. dissertation, The George Washington University, 2018.

\bibitem{khalil1996noninear}
H.~Khalil, ``Noninear systems,'' \emph{Prentice-Hall, New Jersey}, vol.~2,
  no.~5, pp. 5--1, 1996.

\end{thebibliography}
\bibliographystyle{IEEEtran}

\end{document}